\theoremstyle{plain}
\newtheorem{thm}{Theorem}[subsection]
\newtheorem*{thm*}{Theorem}
\newtheorem{cor}[thm]{Corollary}
\newtheorem{lem}[thm]{Lemma}
\newtheorem{prop}[thm]{Proposition}
\theoremstyle{definition}
\theoremstyle{definition}
\newtheorem{rem}[thm]{Remark}
\newtheorem{defi}[thm]{Definition}
\newtheorem{expl}[thm]{Example}
\newtheorem*{ass*}{Assumption}
\numberwithin{equation}{section}
\newcommand{\RR}{\mathbb{R}} %
\newcommand{\CC}{\mathbb{C}} %
\newcommand{\id}{\mathrm{id}}           %
\let\on=\operatorname
\newcommand{\Diff}{\mathrm{Diff}}       %
\newcommand{\biLip}{\mathrm{biLip}}
\let\on=\operatorname
\def\Diff{\on{Diff}}
\author{Yuxiu Lu }
\address{Yuxiu Lu: Department of Mathematics and Statistics, Nanjing University of Science and Technology}
\email{luyxnj@gmail.com }
\date{December 15, 2025}
\keywords{Fisher information metric, universal Teichm\"{u}ller space, Schwarz derivative, the space of symplectic structures, 
Gelfand-Fuchs cocycle, Orlicz space}
\begin{document}

\title[The $L^p$-geometry  and its applications]{The $L^p$-geometry and its applications}

%\date{\today}
%
%

%
\begin{abstract}
We generalize the classical Fisher information metric on statistical models to $L^p$-metrics on various spaces of differential forms or group of diffeomorphisms. Using this new interpretation from information geometry, we derive several new results in geometry on group of diffeomorphisms, symplectic geometry and Teichm\"{u}ller theory. This includes geometry of $\operatorname{Diff}_{-\infty}(\RR)$, similar to that of universal Teichm\"{u}ller space in essence, also a study on the space of all symplectic forms on a symplectic manifold $M$ and a generalization of Gelfand-Fuchs cocycles to higher-dimension.
Furthermore, we answer questions in $\alpha$-geometry posed by Gibilisco, and generalize the $L^p$-metrics to geometry on Orlicz spaces. 
\end{abstract}

\maketitle

\setcounter{tocdepth}{2}
\tableofcontents

\section{Introduction}
The classical information geometry considered a parametrized statistical model as a Riemannian manifold endowed with 
the Fisher information metric $g$, 
\begin{align*}
g_{ij}(\theta)=\int_X \frac{\partial \operatorname{log}p(x;\theta)}{\partial\theta_i}\frac{\partial \operatorname{log}p(x;\theta)}{\partial\theta_j}p(x;\theta)dx.     
\end{align*}
This Riemannian metric is central to information geometry in many aspects, as shown by the \u{C}encov theorem and Cram\'er-Rao inequality; see e.g.,~\cite{Amari2000, Ay2017,rao1993,cramer1946}. There are several decent approaches to generalize this structure between differential geometry and statistics. One is to study the whole space $\operatorname{Prob}(M)$ of all probability densities on a compact manifold $M$. It turns out that by using Moser's trick~\cite{moser1965}, $\operatorname{Prob}(M)$ is diffeomorphic to the quotient $\operatorname{Diff}(M)/\operatorname{Diff}_\mu(M)$ of a smooth ILH principal bundle $\operatorname{Diff}(M)$ with the fibre $\operatorname{Diff}_\mu(M)$; for ILH structure, see~\cite{omori1970group}. In their paper~\cite{khesin2011}, Khesin et. al. showed that
the degenerate right-invariant $\dot H^1$ Riemannian metrics on
the full diffeomorphism group $\operatorname{Diff}(M)$,
\begin{align*}
\langle u, v\rangle_{\dot H^1}=\frac{1}{4}\int \operatorname{div}u\cdot\operatorname{div}v\ \mu,    
\end{align*}
descends to a non-degenerate Riemannian metric on the homogeneous space
of right cosets  $\operatorname{Prob}(M)\cong\operatorname{Diff}(M)/\operatorname{Diff}_\mu(M)$. Therefore, a natural object from statistics and probability can be investigated from a infinite differential-geometric and hydrodynamical perspective. 

The geometric approach to 
hydrodynamics dates back to Arnold~\cite{arnold1966geometrie}, in which he observed that the motion of a fluid in a compact  Riemannian manifold $M$ can be interpreted as  a geodesic in the infinite dimensional group $\operatorname{Diff}(M)$ of volume-preserving diffeomorphisms of $M$. This framework of hydrodynamics include many partial differential equations(PDEs)  of interest in 
mathematical physics, known as Euler-Poincar\'e or Euler-Arnold equations. The PDEs discussed in this paper are two classes related to the projective structure on $M$: $r$-Hunter Saxton equations or $\alpha$-Proudman-Johnson equation
(correlations presented in~\cite{bauer2021paper}) interpreted as as geodesic equations of right invariant homogeneous
$\dot W^{1,r}$-Finsler metrics on the diffeomorphism group~\cite{cotter2020r}; Korteweg-de Vries(KdV) equation, interpreted as geodesic equation of $L^2$-metric on the  Virasoro group~\cite{khesin1987}.

Another way to generalize is to replace the inner product of a Riemannian metric by $L^p$-norms of a Finsler structure. 
Bauer et.~al. introduced the $L^p$-Fisher-Rao metric and investigated its geometrical aspects, e.g., geodesic equations/flows, connections and curvatures in the recent paper~\cite{bauer2023}. This treatment has far more potential to explore, the space of probability densities $\operatorname{Prob}(M)$ or positive densities $\operatorname{Dens}(M)$ can be viewed as a subspace of highest dimensional differential forms $\Omega^n(M)$. It is then natural to investigate lower dimensional differential forms endowed with invariant $L^p$-metrics. One of the most important such spaces is the set of all symplectic forms on a symplectic manifold. In fact,  this idea is rooted in Moser's original paper~\cite{moser1965}, as it puts the volume forms and symplectic forms together for study. Furthermore, we can extend some  considerations of the $L^p$-geometry to a more general function space called Orlicz space. This idea was first proposed by Gibilisco~\cite{Gib20}. 

Finally note that all our discussions on the unparametrized space of probability densities $\operatorname{Prob}(M)$ are based on the compactness of a manifold $M$ since otherwise the Fisher information metric and other generalizations easily diverge, and Moser's trick fails as well. However, the classical information geometry is created for a differential-geometric approach to probability densities on a real line $\RR$, and the single focus on probability with compact supports will exclude many more interesting distributions, e.g., the normal distribution. A comprehensive study on diffeomorphism groups of Euclidean spaces $\RR^n$ was firstly conducted by Michor and Mumford in their papers~\cite{michor2013zoo}, where they investigated classes of Fr\'echet diffeomorphism group on $\RR$ with different decay conditions. This work was developed into a study on geometric and hydrodynamic aspects of various diffeomorphism groups of $\RR$ by Bauer et. al. in~\cite{bauer2014homogeneous}. In their recent work~\cite{bauer2021paper}, Bauer. et. al. further spotted a convenient Fr\'echet Lie group $\operatorname{Diff}_{-\infty}(\RR)$ to study the right invariant homogeneous $\dot W^{1,r}$-Finsler metric, and corresponding geodesic equation, i.e., $r$-Hunter Saxton equations. This hydrodynamic consideration generates
an unexpected consequence:
a $L^p$-approximation to a diffeomorphic isometry from $\operatorname{Diff}_{-\infty}(\RR)$ to the function space $W^{\infty,1}$; see corollary~\ref{inftyisom}.
This mapping turns out to be a sort of potential function for the Schwarzian derivative, which leads to a real counterpart of Teichm\"{u}ller theory. Along the whole Section~\ref{caseRsec}, we will explore the similarity between   $\operatorname{Diff}_{-\infty}(\RR)$ and  universal Teichm\"{u}ller space. The study on geometry and hydrodynamics of universal Teichm\"{u}ller space pioneered by many previous researchers; see e.g.,~\cite{ratiu2015,preston2016}.

\subsection*{The structure of the paper}
Section~\ref{lpfishersec}
gives some background information of the $L^p$-Fisher-Rao metric and its geometry. Section~\ref{caseRsec} considers diffeomorphism groups of the non-compact case $\RR$: we find isometries between diffeomorphism groups, function spaces and space of densities, and use them to derive a real counterpart of Teichm\"{u}ller theory. Section~\ref{symplecticsec} concerns the ILH principal bundle structure on the space  of all symplectic structures on a symplectic manifold, we further investigate the invariant metric and local property of this infinite dimensional manifold. Section~\ref{gelfandsec}
is about generalizing the Gelfand-Fuchs $2$-cocycle to higher-dimensional cocycles on $\operatorname{Prob}(M)$.
The final Section~\ref{orliczsec}
is a generalization of the $L^p$-norms to Orlicz spaces with Luxemburg norm.
\subsection*{Acknowledgements}
This work is based on the author's PhD thesis in FSU.
The author is grateful to his advisor Martin Bauer, colleagues Alice~Le~Brigant, Cy~Maor and Hui~Sun. Section~\ref{caseRsec} is motivated by a talk with Koji~Fujiwara in the 10-th China-Japan Geometry Conference hosted by NJUST. The author would like to thank him and the local organizers.

\section{$L^p$-Fisher-Rao metric and its geometric properties}\label{lpfishersec}
\subsection{The space of densities as a Fr\'echet manifold}
Throughout this paper, we work on a closed oriented manifold $M$ of dimension $n$ in smooth category. The only exception is Section~\ref{caseRsec}, in
which we discuss the non-compact case $\RR$. Two important spaces haunt the whole paper are the space of positive densities $\operatorname{Dens}(M)$ and its subspace of all probability densities $\operatorname{Prob}(M)$  
\begin{align*}
\operatorname{Dens}(M):&=\left\{\mu\in\Omega^n(M): \mu>0\right\} \\
\operatorname{Prob}(M):&=\left\{\mu\in\Omega^n(M): \mu>0, \; \int_M\mu=1\right\}. 
\end{align*}
Here, $\Omega^n(M)$ is the space of all $n$-forms on $M$, and the positivity means the positivity of the Radon-Nikodym derivative $\frac{\mu}{\nu}$
 of $\mu$ with respect to a fixed volume form $\nu$ on $M$. In general, the space of $k$-form $\Omega^k(M)$ carries a Fr\'echet manifold structure. Therefore, as a open subset of $\Omega^n(M)$, the space $\operatorname{Dens}(M)$ is also a Fr\'echet manifold with the tangent space $T_\mu \operatorname{Dens}(M)=\Omega^n(M)$. In the same manner, as a convex subset of $\operatorname{Dens}(M)$, $\operatorname{Prob}(M)$ is a Fr\'echet submanifold with the tangent space given by
 \begin{align*}
 T_\mu\operatorname{Prob}(M):&=\left\{a\in\Omega^n(M):  \int_M a=0\right\}.    
 \end{align*}
We consider the pushforward of a volume form $\mu$ by a diffeomorphism $\varphi$,
 \begin{align*}
\varphi_*\mu=(\varphi^{-1})^*\mu=\operatorname{Jac}_\mu(\varphi^{-1})\mu.     
 \end{align*}
By using Moser's trick, the action of $\operatorname{Diff}(M)$ on $\operatorname{Prob}(M)$ is transitive, and the kernel is exactly the infinite dimensional group $\operatorname{Diff}_\mu(M)$ of volume-preserving diffeomorphisms(volumorphisms) of $M$, thus we have
\begin{equation}
\operatorname{Prob}(M)\cong\operatorname{Diff}(M)/\operatorname{Diff}_\mu(M),
\end{equation}
where $\operatorname{Diff}_\mu(M)$ acts by left cosets.
The equality above is essential as it bridges two totally different parts, the space of densities and group of diffeomorphisms. 

\subsection{The $L^p$-Fisher-Rao metric and its geodesic equation}  
The fundamental Finsler metric used in this paper is the $L^p$-Fisher metric, a generalization of the Fisher-Rao metric that coincides with the original one for the case $p=2$.
\begin{defi}
The $L^p$-Fisher metric is defined as a Finsler metric on $\operatorname{Dens}(M)$,
\begin{align}\label{Finsler Fisher metric}
F({\mu},a):=\left(\int|\frac{a}{\mu}|^p\mu\right)^{1/p},
\end{align}
 where $a\in T_\mu\operatorname{Dens}(M)$ denotes the tangent vector at the point $\mu$.
\end{defi}
\begin{rem}
 Note that by using Moser's trick, the $L^p$-Fisher-Rao metric is equivalent to the right invariant $\dot W^{1,p}$ metric on $\operatorname{Diff}(M)/\operatorname{Diff}_\mu(M)$
\begin{align}\label{sobolevmetricdiff}
||u||_{\dot W^{1,p}}=\frac{1}{p^p}\int|\operatorname{div}u|^p \ \mu,
\end{align}
where $u\in T_{e}\operatorname{Diff}(M)$ and $\eta\in\operatorname{Diff}(M)$.
This interpretation of Fisher-Rao metric was originally introduced by Khesin et. al. in Section 3 of~\cite{khesin2011}.   
\end{rem}

In the following theorem we calculate the geodesic equation of the $L^p$-Fisher-Rao metric on both the space of positive densities $\operatorname{Dens}(M)$ and probability densities $\operatorname{Prob}(M)$:
\begin{thm}\label{lpfishereq}
For any $p\in(1,\infty)$, the geodesic equation of the $L^p$-Fisher-Rao metric on the space of densities $\operatorname{Dens}(M)$ is given by
\begin{align*} 
\partial_t(\frac{\rho_t}{\rho})+\frac{1}{p}(\frac{\rho_t}{\rho})^2=0,
\end{align*}
While the geodesic equation of the $L^p$-Fisher-Rao metric on the space of probability densities $\operatorname{Prob}(M)$ is given by
\begin{align}\label{lpgeodesicequ} 
\nabla\left(|\frac{\rho_t}{\rho}|^{p-2}\partial_t(\frac{\rho_t}{\rho})+\frac{1}{p}|\frac{\rho_t}{\rho}|^p\right)=0.
\end{align}
Note that here we identify the path $\mu(x,t)=\rho(x,t) dx$ on $\operatorname{Dens}(M)$ with a path $\rho=\rho(x,t)$ on $C^\infty(M)$ by using the Poincar\'e's duality in the smooth category.

%In particular, the geodesic equation of the Fisher-Rao metric on the space of densities $\operatorname{Dens}(M)$ is given by\begin{align*} 2\frac{d}{dt}(\frac{\varphi_t}{\varphi})+(\frac{\varphi_t}{\varphi})^2=0.\end{align*}
\end{thm}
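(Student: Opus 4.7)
The plan is to view both equations as Euler--Lagrange equations of the $p$-energy functional
\[
E(\rho) \;=\; \frac{1}{p}\int_a^b\int_M \abs{\frac{\rho_t}{\rho}}^p \rho \, dx \, dt,
\]
whose critical points are (constant speed) geodesics of the Finsler length $F$. Since the Lagrangian $L(\rho,\rho_t) = |\rho_t|^p \rho^{1-p}$ carries no spatial derivatives, the variational problem decouples pointwise in $x$ and reduces to an ODE in $t$ at each point of $M$. Writing $w := \rho_t/\rho$ and computing the partial derivatives
\[
\partial_{\rho_t}L = p\,|w|^{p-2}w, \qquad \partial_\rho L = (1-p)\,|w|^p,
\]
I would integrate by parts in $t$ (with variations fixed at the temporal endpoints) to obtain
\[
\delta E \;=\; \frac{1}{p}\int_a^b\int_M \Brk{(1-p)|w|^p - p\,\partial_t\brk{|w|^{p-2}w}}\,\delta\rho\, dx\, dt.
\]

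For the unconstrained case on $\operatorname{Dens}(M)$, arbitrary $\delta\rho$ forces the bracketed integrand to vanish pointwise. Using the identity $\partial_t(|w|^{p-2}w) = (p-1)|w|^{p-2}w_t$ together with $|w|^p = |w|^{p-2} w^2$, the equation collapses (away from the zero set of $w$) to $w_t + w^2/p = 0$, which is exactly the first stated equation. For $\operatorname{Prob}(M)$, admissible variations obey the constraint $\int_M \delta\rho \, dx = 0$ for each $t$, and a standard Lagrange multiplier argument shows that the bracketed integrand need not vanish but must be spatially constant: it equals some $\lambda(t)$ independent of $x$. After dividing by $p(p-1)$ this reads $|w|^{p-2}w_t + |w|^p/p = c(t)$, which is precisely the vanishing of the spatial gradient claimed in the theorem.

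The main subtlety is the Lagrange-multiplier step: I would justify it by decomposing any variation as a mean-zero part plus a multiple of a fixed reference density, noting that only the mean-zero component preserves the probability constraint, and that the $L^2$-annihilator of the mean-zero functions is spanned by the constants, producing exactly the multiplier $\lambda(t)$. A secondary technical point is regularity: for $p \in (1,2)$ the function $|w|^{p-2}w$ is only $C^{p-1}$, so the chain rule $\partial_t(|w|^{p-2}w) = (p-1)|w|^{p-2}w_t$ and the subsequent manipulations must be interpreted carefully near the zero set of $w$. For smooth classical geodesics, however, this poses no essential difficulty.
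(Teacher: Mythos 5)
Your proposal is correct and follows essentially the same route as the paper: minimize the $p$-energy $E_p(\rho)=\tfrac1p\int\!\!\int|\rho_t/\rho|^p\rho\,dx\,dt$, integrate by parts in $t$ to read off $p\,\partial_t(|w|^{p-2}w)+(p-1)|w|^p=0$ on $\operatorname{Dens}(M)$, and introduce a spatially constant Lagrange multiplier for the volume constraint on $\operatorname{Prob}(M)$ before applying $\nabla$. Your added justification of the multiplier (annihilator of mean-zero variations is the constants) and the regularity caveat for $p\in(1,2)$ near zeros of $w$ are refinements the paper leaves implicit, but the argument is the same.
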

\begin{rem}
From the above theorem, we can see that the geodesic equation of the $L^p$-Fisher-Rao metric on $\operatorname{Dens}(M)$ coincides with the geodesic equation of the $\alpha$-connection on $\operatorname{Dens}(M)$ but not on $\operatorname{Prob}(M)$. For $M=S^1$, their geodesic equations will lead to two different PDEs: one is the Hunter-Saxton equation which is derived from the Chern connection; the other is the Proudman-Johnson equation which is derived from the $\alpha$-connection; see e.g.,~\cite{bauer2021paper,bauer2023}. 
\end{rem}

\begin{proof}
The length functional of a Finsler-metric $F$ on $\operatorname{Dens}(M)$ is defined as
\begin{equation}\label{length}
L(\rho)=\int_0^1 F(\rho dx,\rho_t dx) dt,   
\end{equation}
where $\rho: [0,1]\to C^\infty(M)$ is a path such that $\mu=\rho dx\in \operatorname{Dens}(M)$ and where $\rho_t$ denotes its derivative. 
A geodesic is a path that locally minimize the length functional; since $L$ is invariant to reparametrization, we can restrict ourselves to paths of constant speed.
By H\"older inequality, it is immediate that constant speed geodesics are exactly the local minimizers of the $q$-energy 
\begin{equation}\label{energyfunctionlp}
E_q(\rho)=\int_0^1 F^q(\rho dx,\rho_t dx) dt,   
\end{equation}
for any $q> 1$.
In our case, for the $L^p$-Fisher-Rao metric the most convenient choice is to consider the $q$-Energy with $q=p$. 
The corresponding energy functional reads as
\begin{equation} 
E_p(\rho)=\frac{1}{p}\int_0 ^1 \int_M|\frac{\rho_{t}}{\rho}|^p \rho dxdt,\end{equation}
Then we can calculate the variation of the $p$-energy functional by means of integral by parts
\begin{align*}
\delta E_p(\rho)(\delta \rho)&=\frac{1}{p}\int_0 ^1 \int  p|\frac{\rho_t}{\rho}|^{p-2}\frac{\rho_t}{\rho}\delta \rho_t-(p-1)|\frac{\rho_t}{\rho}|^p  \delta \rho  \ dxdt\\ 
&=-\frac{1}{p}\int_0 ^1 \int  \left( p\partial_t(|\frac{\rho_{t}}{\rho}|^{p-2}\frac{\rho_t}{\rho})+(p-1)|\frac{\rho_t}{\rho}|^p \right) \  \delta \rho  \ dxdt,
\end{align*}
Thus we can read off the geodesic equation
\begin{align*} 
p\partial_t(|\frac{\rho_{t}}{\rho}|^{p-2}\frac{\rho_t}{\rho})+(p-1)|\frac{\rho_t}{\rho}|^p=0.
\end{align*}
We can simplify to the following form
\begin{equation} 
\partial_t|\frac{\rho_t}{\rho}|+\frac{1}{p}(\frac{\rho_t}{\rho})^2=0
\end{equation}
By applying the same method above to the $L^p$-Fisher-Rao metric on $\operatorname{Prob}(M)$, we have the following energy functional
\begin{align*} 
\tilde E_p(\rho)=\frac{1}{p}\int_0 ^1 \int|\frac{\rho_{t}}{\rho}|^p \rho dxdt-\lambda\int\left(\int\rho dx-1\right)dt,
\end{align*}
where $\lambda$ is a constant with respect to the variable $x$.
Then the variation of the $p$-energy functional is given by
\begin{align*}
\delta\tilde E_p(\rho)(\delta \rho)=-\frac{1}{p}\int_0 ^1 \int  \left( p\partial_t(|\frac{\rho_{t}}{\rho}|^{p-2}\frac{\rho_t}{\rho})+(p-1)|\frac{\rho_t}{\rho}|^p-\lambda\right) \  \delta \rho  \ dxdt,
\end{align*}
After applying the gradient, the geodesic equation reads
\begin{equation}
\nabla\left(|\frac{\rho_t}{\rho}|^{p-2}\partial_t(\frac{\rho_t}{\rho})+\frac{1}{p}|\frac{\rho_t}{\rho}|^p\right)=0.
\end{equation}
\end{proof}
In fact, we have a more explicit form of the geodesic equation on $\operatorname{Prob}(M)$, which can be derived from the Chern connection; see ~\cite{Lu2023}: 
\begin{align*}
 \partial_t(\frac{\rho_t}{\rho})+\frac{1}{p}|\frac{\rho_t}{\rho}|^2+\frac{p-1}{p}\left(\int|\frac{\rho_t}{\rho}|^2\mu/\int|\frac{\rho_t}{\rho}|^{2-p}\mu\right)|\frac{\rho_t}{\rho}|^{2-p}=0,  
 \end{align*}
 where $\rho(t)=\mu(t)/dx$ is a curve of probability functions.
 Note that the above formula for the geodesic equation has to be understood formally only.  Since the tangent vector $\rho_t$ must have some zeros on $\operatorname{Prob}(M)$, the quantity $|\frac{\rho_t}{\rho}|^{2-p}$ in the equation is not well-defined and might not be integrable for $p>2$. This gives a negative answer to the complete integrability for the $\alpha$-Proudman-Johnson equations.

As in the case of Fisher-Rao metric~\cite{lenells2007hunter,lenells2008hunter}, we have an isometry from the space of densities with the $L^p$-Fisher-Rao metric to a flat space.
\begin{thm}\label{lpfisherisometry}
The mapping 
\begin{align*}
\Phi: \begin{cases}
\left(\operatorname{Dens}(M), F\right)  &\to  \left(C^{\infty}(M),L^p\right)\\ \mu &\mapsto p\left(\frac{\mu}{dx}\right)^{1/p}
\end{cases}
\end{align*}
is an isometric embedding.
Furthermore, the image $\mathcal U=\Phi(\operatorname{Dens}(M))$  is 
the set of all positive functions in $C^{\infty}(M)$, i.e.,
\begin{align*}
\mathcal U=\{f\in C^{\infty}(M):\ f>0\}.
\end{align*}
In particular, the restriction map $\Phi|_{\operatorname{Prob}(M)}$ is also an isometric embedding, and the image $\mathcal U'=\Phi(\operatorname{Prob}(M))$  is 
the set of all positive functions in a $L^p$-sphere, i.e.,
\begin{align*}
\mathcal U'=\{f\in C^{\infty}(M):\ f>0,\ ||f||_{L^p}=p\}.
\end{align*}
\end{thm}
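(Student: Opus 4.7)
The plan is to proceed in three steps: establish that $\Phi$ is a smooth bijection onto $\mathcal{U}$, verify the isometry infinitesimally, and then specialize to the probability case.

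For the image description, $\Phi(\mu)=p(\mu/dx)^{1/p}$ is manifestly positive and smooth on $M$. Conversely, given any $f\in\mathcal{U}$, the density $(f/p)^p\,dx$ lies in $\operatorname{Dens}(M)$ and maps to $f$ under $\Phi$; positivity and smoothness of $x\mapsto x^{1/p}$ on $(0,\infty)$ makes both $\Phi$ and its inverse smooth between the relevant Fr\'echet open sets. This identifies $\Phi(\operatorname{Dens}(M))$ with $\mathcal{U}$.

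The heart of the proof is the infinitesimal isometry computation. Writing $\mu=\rho\,dx$ with $\rho>0$ and a tangent vector $a=\alpha\,dx\in T_\mu\operatorname{Dens}(M)$, the chain rule gives $d\Phi_\mu(a)=\rho^{(1-p)/p}\alpha$. Then
\begin{align*}
\|d\Phi_\mu(a)\|_{L^p}^p=\int_M \rho^{1-p}|\alpha|^p\,dx=\int_M\Bigl|\frac{\alpha}{\rho}\Bigr|^p\rho\,dx=\int_M\Bigl|\frac{a}{\mu}\Bigr|^p\mu=F(\mu,a)^p,
\end{align*}
by rewriting $\rho^{1-p}|\alpha|^p=|\alpha/\rho|^p\rho$. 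This is essentially the entire content of the theorem, and I do not anticipate a serious obstacle: the exponent $1/p$ in the definition of $\Phi$ is chosen precisely to make this algebraic identity work. Since $\Phi$ is a smooth bijection onto $\mathcal{U}$ with a norm-preserving differential, it is an isometric embedding of Finsler manifolds.

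For the restriction to $\operatorname{Prob}(M)$, the constraint $\int_M\mu=1$ translates into $\int_M(f/p)^p\,dx=1$, i.e.\ $\|f\|_{L^p}=p$, so $\Phi$ carries $\operatorname{Prob}(M)$ bijectively onto $\mathcal{U}'$. To confirm that this is still an isometric embedding of Fr\'echet submanifolds, I would check that $d\Phi_\mu$ maps $T_\mu\operatorname{Prob}(M)=\{a:\int_M a=0\}$ into the tangent space of the $L^p$-sphere at $\Phi(\mu)$, which is characterized by $\int_M f^{p-1}h\,dx=0$. Substituting $h=\rho^{(1-p)/p}\alpha$ and $f=p\rho^{1/p}$ reduces this constraint to $p^{p-1}\int_M\alpha\,dx=0$, which holds by the hypothesis on $a$. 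The norm identity from the previous step then restricts automatically, completing the proof. The only delicate point throughout is notational, namely distinguishing $\mu$ as an $n$-form from its density $\rho=\mu/dx$, a distinction already built into the statement via Poincar\'e duality.
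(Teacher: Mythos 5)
Your proposal is correct and follows essentially the same route as the paper: the core of both arguments is the computation $D_\mu\Phi(a)=\frac{a}{dx}\left(\frac{\mu}{dx}\right)^{1/p-1}$ followed by the identity $\int|D_\mu\Phi(a)|^p\,dx=\int\left|\frac{a}{\mu}\right|^p\mu$. You additionally spell out the inverse $f\mapsto(f/p)^p\,dx$ for the image characterization and the tangency check $\int f^{p-1}h\,dx=p^{p-1}\int\alpha\,dx=0$ for the restriction to $\operatorname{Prob}(M)$, details the paper leaves implicit but which do not change the argument.
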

\begin{proof}
Fix a point $\mu$, then the differential of $\Phi$ or $\Psi$ at this point along the vector $a$ is 
 \begin{align*}
D_\mu\Phi(a)=\frac{a}{dx}(\frac{\mu}{dx})^{1/p-1}.
\end{align*}
Therefore the norm induced by the embedding $\Phi$ is given by 
 \begin{align*}
|| D_\mu\Phi(a)||_{L^p}^p=\int|D_\mu\Phi(a)|^{p}dx=\int|\frac{a}{\mu}|^p\mu,
\end{align*}
which implies that the embedding $\Phi$ is an isometry preserving the $L^p$-Fisher-Rao metric. 
 \end{proof}
By using this isometry, we can give a explicit expression of the geodesic for the $L^p$-Fisher-Rao metric on 
$\operatorname{Dens}(M)$.
\begin{thm}\label{geodLPFR}
Given boundary conditions $\rho(0)=\rho_0, \rho(1)=\rho_1$ such that $\rho_0 dx, \\ \rho_1 dx\in\operatorname{Dens}(M)$, 
the unique geodesic of the $L^p$-Fisher-Rao metric on $\operatorname{Dens}(M)$ that connects $\rho_0 dx$ to $\rho_1 dx$ is given by
 \begin{align*}
\rho(t)=(t\sqrt[p]{\rho_1}+(1-t)\sqrt[p]{\rho_0})^p, \ \ 0\leq t\leq1,
\end{align*}
where we identify the probability function $\rho$ with the probability density $\rho dx$ as above.
\end{thm}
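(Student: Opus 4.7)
The plan is to transport the problem via the isometry $\Phi$ from Theorem~\ref{lpfisherisometry}. Since $\Phi:(\operatorname{Dens}(M),F)\to(C^\infty(M),L^p)$ is an isometric embedding onto the open subset $\mathcal U$ of pointwise positive functions, a path $\rho(t)\,dx$ is a minimizing geodesic of the $L^p$-Fisher-Rao metric on $\operatorname{Dens}(M)$ if and only if its image $\Phi(\rho(t)\,dx)=p\,\rho(t)^{1/p}$ is a minimizing geodesic in $(\mathcal U,L^p)$. So the task reduces to identifying geodesics in an open convex subset of a Banach (in general Finsler) space.

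In any normed linear space the straight line $\ell(t)=(1-t)f_0+tf_1$ joining $f_0,f_1$ has constant speed $\|\ell'(t)\|_{L^p}=\|f_1-f_0\|_{L^p}$, and total length equal to $\|f_1-f_0\|_{L^p}$. The triangle inequality applied to any competing path yields length at least this value, so straight lines are minimizing geodesics. For $1<p<\infty$ the $L^p$-norm is strictly convex, hence equality in the triangle inequality forces any other minimizer to be a reparametrization of the same line, giving uniqueness.

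Next I would check that the relevant straight segment remains in the image $\mathcal U$. With $f_i=p\sqrt[p]{\rho_i}$ for $i=0,1$, the segment
\begin{equation*}
\ell(t)=(1-t)f_0+tf_1=p\bigl((1-t)\sqrt[p]{\rho_0}+t\sqrt[p]{\rho_1}\bigr)
\end{equation*}
is pointwise strictly positive because $\rho_0,\rho_1>0$, so $\ell(t)\in\mathcal U$ for all $t\in[0,1]$. Consequently the global straight-line geodesic in $C^\infty(M)$ is an admissible path in $\mathcal U$, and hence coincides with the minimizing geodesic of the restricted $L^p$-geometry on $\mathcal U$. Finally, inverting $\Phi$ via $\Phi^{-1}(f)=(f/p)^p\,dx$ converts $\ell(t)$ back to
\begin{equation*}
\rho(t)=\bigl(t\sqrt[p]{\rho_1}+(1-t)\sqrt[p]{\rho_0}\bigr)^p,
\end{equation*}
which is the announced formula.

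The only genuinely subtle point, and therefore the main obstacle to make watertight, is arguing that a minimizer inside the open subset $\mathcal U$ is the restriction of the global straight-line minimizer in $L^p$; this relies on $\mathcal U$ being convex (so the straight line does not escape) together with strict convexity of the $L^p$-norm for $1<p<\infty$. Everything else — the isometry, the parametrization of the line, and the inversion — is bookkeeping.
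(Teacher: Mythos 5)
Your proposal is correct and follows essentially the same route as the paper: push the problem through the isometry $\Phi$ of Theorem~\ref{lpfisherisometry}, observe that geodesics in the flat target are straight lines, check the segment stays in the convex image $\mathcal U$, and pull back. You actually supply more detail than the paper's proof does — in particular the uniqueness argument via strict convexity of the $L^p$-norm for $1<p<\infty$, which the paper asserts but does not justify.
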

 \begin{proof} 
It is clear that the geodesic on a flat space is exactly a straight line. This is easy to see since the geodesic connecting two positive functions $f$ and $g$ within $\mathcal U$ is the line $\alpha(t,x)=tf(x)+(1-t)g(x)$, $0\leq t\leq 1$. 
So by using the pullback of the isometry $\Phi$, we have the following geodesic equation on $\operatorname{Dens}(M)$ 
 \begin{align*}
\Phi(\rho(t))=t\Phi(\rho_1)+(1-t)\Phi(\rho_0), \ \ 0\leq t\leq1.
\end{align*}
 \end{proof}
It remains an open question whether it is possible to find the explicit formula of geodesics on $\operatorname{Prob}(M)$: through the isometry given by Theorem~\ref{lpfisherisometry}, it suffices to find the explicit solution of geodesics on a $L^p$-sphere, but this seems not be possible in closed form. We hope that this viewpoint will still provide further tools to study the geodesics on $\operatorname{Prob}(M)$.

\subsection{The \u{C}encov theorem}
The \u{C}encov theorem is fundamental in information geometry. It states that the Fisher-Rao metric on statistical models is essentially the only Riemannian metric that is invariant under sufficient statistics. 
The original version of \u{C}encov theorem~\cite{campbell1986, chentsov1978, cencov1982} only applies to the restricted setting of statistical models with finite data spaces, i.e., it states that the Fisher information metric is the only metric (up to a multiplicative constant) that is defined on all models with finite data spaces and is invariant under all sufficient statistics. For smooth category, the following version of \u{C}encov theorem has been shown for invariance under smooth diffeomorphisms.
\begin{thm}[Bauer et. al.~\cite{bauer2016}]\label{chentsov}
 The Fisher information metric is the only Riemannian metric (up to a multiple) that is defined on the space of all smooth, positive densities on a compact 
manifold $M$ of dimension $2$ or higher, and is invariant under all diffeomorphisms from $M$ into itself (where diffeomorphisms are smooth maps with smooth inverse, so they are a special case of sufficient statistic).
\end{thm}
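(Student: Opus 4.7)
The plan is to use the transitive action of $\operatorname{Diff}(M)$ on $\operatorname{Dens}(M)$ to reduce the whole metric to data at a single reference density, and then exploit the isotropy subgroup, which is the very large group of volumorphisms, to pin down the shape of the bilinear form.

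First, I fix a reference density $\mu_0=dx$. By Moser's trick, for any $\mu\in\operatorname{Dens}(M)$ there is a diffeomorphism $\varphi$ with $\varphi_*\mu_0 = c\mu$ for some constant (up to total mass), so a $\operatorname{Diff}(M)$-invariant Riemannian metric $g$ on $\operatorname{Dens}(M)$ is entirely determined by its value $g_{\mu_0}$ at the reference point, together with its behaviour under rescaling in the fibre direction. Identifying $T_{\mu_0}\operatorname{Dens}(M)=\Omega^n(M)\cong C^\infty(M)$ via $a\mapsto a/\mu_0$, the metric is encoded by a continuous symmetric bilinear form $B\colon C^\infty(M)\times C^\infty(M)\to\mathbb{R}$. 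Invariance under the isotropy group yields the key identity
\begin{equation*}
B(f\circ\varphi,\,g\circ\varphi)\;=\;B(f,g)\qquad\text{for all }\varphi\in\operatorname{Diff}_{\mu_0}(M).
\end{equation*}

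The core of the proof is to deduce from this invariance that $B$ is a \emph{local} bilinear functional, i.e.\ of the form $B(f,g)=\int_M \Phi(f(x),g(x))\,\mu_0(x)$ for some smooth function $\Phi$ of two real variables. This is the step that essentially uses the hypothesis $\dim M\ge 2$: in this regime, $\operatorname{Diff}_{\mu_0}(M)$ is large enough that, combined with a partition-of-unity localisation, one can move the supports of smooth perturbations anywhere without changing $B$. Concretely, I would show that if $f_1,g_1$ and $f_2,g_2$ have the same joint distribution (which in dimension $\ge 2$ can be realised via a volume-preserving diffeomorphism on neighbourhoods of their support), then $B(f_1,g_1)=B(f_2,g_2)$; continuity then forces locality through a Peetre-type argument. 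This is precisely where dimension one would fail, because $\operatorname{Diff}_{\mu_0}(S^1)$ reduces essentially to rotations and cannot separate functions with the same distribution.

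Once locality is established, symmetry and bilinearity force $\Phi(x,y)$ to be bilinear in $(x,y)$, hence $\Phi(x,y)=c\,xy$ for a constant $c$ (the positivity of $g$ fixes the sign). Translating back through the identification $a\mapsto a/\mu_0$ gives
\begin{equation*}
g_{\mu_0}(a,b)\;=\;c\int_M \frac{a}{\mu_0}\cdot\frac{b}{\mu_0}\,\mu_0\;=\;c\int_M\frac{ab}{\mu_0}.
\end{equation*}
Finally, propagating this expression to an arbitrary $\mu=\varphi_*\mu_0$ by $\operatorname{Diff}(M)$-invariance recovers precisely the Fisher information metric, up to the multiplicative constant $c$. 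The main obstacle, and the step that deserves the most care, is the locality reduction: one must argue that the orbits of $\operatorname{Diff}_{\mu_0}(M)$ in $C^\infty(M)\times C^\infty(M)$ are rich enough to separate the joint law of $(f,g)$ from any non-local contribution, and this is exactly where the $\dim M\ge 2$ hypothesis is used in an essential way.
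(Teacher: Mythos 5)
This theorem is quoted from Bauer--Bruveris--Michor \cite{bauer2016}; the paper you are reading gives no proof of its own, so your attempt can only be measured against the argument in the cited reference. Your skeleton (reduce to a reference density $\mu_0$, use invariance under the isotropy group $\operatorname{Diff}_{\mu_0}(M)$, classify the resulting bilinear form on $C^\infty(M)$) is the right one, but the central step --- ``invariance forces locality'' --- is false as stated. The bilinear form
\begin{equation*}
B(f,g)=\int_M f\,\mu_0\cdot\int_M g\,\mu_0
\end{equation*}
is continuous, symmetric, invariant under all volumorphisms (indeed under all diffeomorphisms once transported over $\operatorname{Dens}(M)$), and is manifestly not of the form $\int_M\Phi(f(x),g(x))\,\mu_0$. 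This is not a pathology you can wish away: the actual classification in \cite{bauer2016} is
\begin{equation*}
G_\mu(a,b)=C_1(\mu(M))\int_M\frac{a}{\mu}\frac{b}{\mu}\,\mu+C_2(\mu(M))\int_M a\cdot\int_M b,
\end{equation*}
and the ``uniqueness up to a multiple'' of the Fisher metric only emerges after this second summand is accounted for (it vanishes on $T\operatorname{Prob}(M)$, where tangent vectors integrate to zero). An argument that derived locality from invariance alone would prove a statement that is literally false on $\operatorname{Dens}(M)$.

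The second gap is the mechanism you propose for locality. It is not true that two pairs $(f_1,g_1)$, $(f_2,g_2)$ with the same joint distribution under $\mu_0$ are related by a volume-preserving diffeomorphism: the joint law is blind to the critical-point structure and the topology of the level sets, so no diffeomorphism (volume-preserving or not) need exist --- two Morse functions with different numbers of critical points can have identical distributions. The cited proof instead represents $B$ by a distributional kernel on $M\times M$ via the Schwartz kernel theorem and classifies the $\operatorname{Diff}_{\mu_0}(M)$-invariant distributions: off the diagonal, transitivity of the volumorphism group on pairs of distinct points (this is where $\dim M\ge 2$ enters, and where $S^1$ genuinely fails because the cyclic order is an extra invariant) forces the kernel to be a constant multiple of $\mu_0\otimes\mu_0$; on the diagonal one must additionally rule out derivative-of-delta terms, i.e.\ contributions of the form $\int_M f\,(Pg)\,\mu_0$ for a differential operator $P$ of positive order, which your reduction to a pointwise $\Phi(f,g)$ silently skips. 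Repairing your proof essentially amounts to redoing this kernel analysis.
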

\begin{rem}\label{rem:cencov}
However, we will not have the same nice result for the $L^p$-Fisher metric. Later in Proposition~\ref{orliczinvariant}, we will see that the finite combinations of $L^p$-Fisher metrics fail to include all the possible Finsler metrics invariant under all diffeomorphisms, and in fact the set of Luxemburg norms form a class of $C^1$-Finsler metrics invariant under all diffeomorphisms. 
\end{rem}

 \section{The case of  $\mathbb R$: real counterpart of Teichm\"{u}ller theory}\label{caseRsec} 
From the preceding discussion, we see a concise and clean treatment of the $L^p$-geometry on $\operatorname{Dens}(M)$  and $\operatorname{Prob}(M)$ of a compact manifold $M$. It is natural to consider the case for non-compact manifolds, and it is indeed important, since the theory of compact manifolds fails to include the case $M=\mathbb R$ and the corresponding normal distributions and exponential distributions.
However, now the spaces $\operatorname{Dens}(\mathbb R)$ and  $\operatorname{Prob}(\mathbb R)$ become too large to carry out analysis in these spaces, as the metric defined by integration on $M$ easily diverges for some tangents. Therefore, we restrict our analysis to specific spaces of densities (probability densities) with decay conditions, which can be viewed as certain classes of diffeomorphism group acting on elements of $\operatorname{Prob}(\mathbb R)$.

\subsection{The structure of $\Diff_{-\infty}(\mathbb R)$} Note that the group of all smooth, orientation preserving diffeomorphisms of the real line is not an open subset of the space $C^{\infty}(\mathbb R,\mathbb R)$ and consequently it is not a smooth Fr\'echet manifold.
So we need to restrict ourselves to groups of diffeomorphisms with certain decay conditions, which allows us to regain a manifold structure in these groups; see e.g., ~\cite{michor2013zoo,kriegl2015exotic} for further information. In the following, we consider the diffeomorphism group
$\Diff_{-\infty}(\mathbb R)$
related to the Sobolev space $W^{\infty,1}(\mathbb R)$, which is introduced in~\cite{bauer2014homogeneous}.
\begin{align*}
\Diff_{-\infty}(\mathbb R)=\left\{\varphi=\id+f: f'\in W^{\infty,1}(\mathbb R),\; f'>-1,\text{ and } \lim_{x\to -\infty} f(x)=0 \right\},    
\end{align*}
where $W^{\infty,1}(\mathbb R)={\bigcap} W^{k,1}(\mathbb R)$ is defined as the intersection of all Sobolev spaces of order $k\geq 0$. It has been shown in~\cite{bauer2014homogeneous} that this space is a smooth Fr\'echet Lie-groups with Lie-algebra:
\begin{align*}
\mathfrak g_{-\infty}=\left\{u: u'\in W^{\infty,1}(\mathbb R)\text{ and } \lim_{x\to -\infty} u(x)=0 \right\}.
\end{align*}
Bauer et. al. show that there is an isometry mapping the diffeomorphism group $\Diff_{-\infty}(\mathbb R)$ with the right-invariant $\dot W^{1,p}$-Finsler metric to an open subset of a vector space.
\begin{thm}[Bauer et. al.~\cite{bauer2021paper}]\label{thm:isometry:nonperiodic}
For $p\in [1,\infty)$, the mapping
\begin{equation}
\Phi_p: \begin{cases}
\left(\Diff_{-\infty}(\mathbb R), \dot W^{1,p}\right)  	&\to  \left(W^{\infty,1}(\mathbb R),L^p\right)\\ 
\varphi 						&\mapsto p\left(\varphi'^{1/p}-1\right)
\end{cases}
\end{equation}
is an isometric embedding. Furthermore, the image $\mathcal U=\Phi_p(\Diff_{-\infty}(\mathbb R))$  is 
the set of all positive functions in $W^{\infty,1}(\mathbb R)$, i.e.,
\begin{equation}\label{eq:nonperiodic:U}
\mathcal U=\{f \in W^{\infty,1}(\mathbb R):f>-p\}.
\end{equation}
The inverse of $\Phi$ is given by
\begin{equation}\label{eq:Phiinverse:nonperiodic}
\Phi_p^{-1}: \begin{cases}
W^{\infty,1}(\mathbb R)  &\to  \Diff_{-\infty}(\mathbb R) \\ f &\mapsto x+\int_{-\infty}^x \left( \left(\frac{f(\tilde x)}{p}+1\right)^p-1\right)d\tilde x.
\end{cases}
\end{equation}
 \end{thm}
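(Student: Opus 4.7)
The plan is to verify the three claims (isometric embedding, image characterization, explicit inverse) in order, after first checking that $\Phi_p$ is well defined, i.e., that $p(\varphi'^{1/p}-1) \in W^{\infty,1}(\mathbb{R})$ whenever $\varphi\in\Diff_{-\infty}(\mathbb{R})$. For this well-definedness step I would write $\varphi'^{1/p}-1 = ((1+g)^{1/p}-1)$ with $g=\varphi'-1\in W^{\infty,1}$ and $g>-1$, and then apply the standard Faà-di-Bruno-type closure of $W^{\infty,1}$ under composition with smooth functions vanishing at $0$ (which is where the constraint $f>-1$ enters, guaranteeing we stay on the smooth branch of $t\mapsto(1+t)^{1/p}$). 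The reverse closure, needed later for the inverse, is entirely analogous with $t\mapsto(1+t/p)^p-1$.

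For the isometry, I would compute the differential by differentiating along a path: if $h\in T_\varphi \Diff_{-\infty}(\mathbb{R})$, then
\[
D_\varphi\Phi_p(h)=\varphi'^{\,1/p-1}\,h'.
\]
The right-invariant $\dot W^{1,p}$-Finsler norm is defined by $\|h\|_{\dot W^{1,p},\varphi}=\|u'\|_{L^p}$ where $h=u\circ\varphi$ and $u\in\mathfrak{g}_{-\infty}$. Then $h'=(u'\circ\varphi)\,\varphi'$, so
\[
D_\varphi\Phi_p(h)=\varphi'^{\,1/p}\,(u'\circ\varphi),
\]
and a change of variables $y=\varphi(x)$, $dy=\varphi'\,dx$ gives
\[
\|D_\varphi\Phi_p(h)\|_{L^p}^p=\int_{\mathbb{R}}\varphi'\,|u'\circ\varphi|^p\,dx=\int_{\mathbb{R}}|u'|^p\,dy=\|h\|_{\dot W^{1,p},\varphi}^p.
\]
This is the isometry condition at every point $\varphi$, and since $\Phi_p$ has the stated pointwise formula it is injective (as $\varphi'$, hence $\varphi$ up to the anchoring $\lim_{x\to-\infty}(\varphi-\id)=0$, is recovered from $\Phi_p(\varphi)$).

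For the image, the forward inclusion $\Phi_p(\Diff_{-\infty}(\mathbb{R}))\subseteq\{f\in W^{\infty,1}:f>-p\}$ follows from $\varphi'>0$, hence $\varphi'^{1/p}>0$, hence $p(\varphi'^{1/p}-1)>-p$. For the reverse inclusion I would exhibit the inverse: given $f\in W^{\infty,1}$ with $f>-p$, define $\varphi$ by the formula in \eqref{eq:Phiinverse:nonperiodic}. The pointwise derivative is $\varphi'=(f/p+1)^p>0$, so $\varphi$ is an orientation-preserving smooth map with $\varphi'-1=(f/p+1)^p-1\in W^{\infty,1}$ by the closure lemma above; the decay of $f$ together with $\lim_{x\to-\infty}(\varphi(x)-x)=0$ puts $\varphi$ into $\Diff_{-\infty}(\mathbb{R})$. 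A direct substitution $\Phi_p(\varphi)=p((f/p+1)-1)=f$ verifies $\Phi_p\circ\Phi_p^{-1}=\id$, and the injectivity established above completes the bijection.

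The main obstacle is the bookkeeping in the well-definedness/closure step: one must confirm that the nonlinear operations $g\mapsto (1+g)^{1/p}-1$ and $f\mapsto (1+f/p)^p-1$ indeed map $W^{\infty,1}(\mathbb{R})$ into itself (under the pointwise positivity constraint), and that the resulting $\varphi$ stays in the prescribed Fréchet Lie group. This requires the Sobolev algebra property and the Moser-type estimate $\|F(g)\|_{W^{k,1}}\lesssim_{k,F}\|g\|_{W^{k,1}}$ for smooth $F$ with $F(0)=0$, applied uniformly on bounded subsets; everything else (the computation of the differential, the change of variables, and the inverse identity) is a routine one-line verification once this functional-analytic framework is in place.
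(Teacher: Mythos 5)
The paper gives no proof of this theorem at all: it is imported verbatim from the cited reference, and the only internal analogue is the proof of Theorem~\ref{lpfisherisometry}, which consists of exactly your central computation (differentiate the map, change variables, recover the $L^p$ norm of $u'$). Your proposal is correct and fills in precisely the pieces the paper leaves implicit — well-definedness via closure of $W^{\infty,1}$ under composition with smooth functions vanishing at $0$, the image characterization, and the verification of the inverse — with the only unstated detail being that $f'>-1$ together with continuity and decay of $f'$ forces $\inf f' > -1$ (and likewise $\inf f > -p$ on the image side), which is what lets the Moser-type composition estimate apply on a compact subinterval of the domain of $t\mapsto(1+t)^{1/p}$.
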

 We can go one step forward by considering the case for $p\to\infty$.
\begin{cor}\label{inftyisom}
the mapping
\begin{equation}
\Phi_\infty: \begin{cases}
\left(\Diff_{-\infty}(\mathbb R), \dot W^{1,\infty}\right)  	&\to  \left(W^{\infty,1}(\mathbb R),L^\infty\right)\\ 
\varphi 						&\mapsto \operatorname{log}(\varphi')
\end{cases}
\end{equation}
is an isometric diffeomorphism. Furthermore, $\Phi_\infty$ is the limit of the mappings $\Phi_p$ for $p\to\infty$. The inverse of $\Phi_\infty$ is given by
\begin{equation}\label{phiinverse}
\Phi^{-1}_\infty: \begin{cases}
W^{\infty,1}(\mathbb R)  &\to  \Diff_{-\infty}(\mathbb R) \\ f &\mapsto x+\int_{-\infty}^x \left( e^{f(\tilde x)}-1\right)d\tilde x.
\end{cases}
\end{equation}  
\end{cor}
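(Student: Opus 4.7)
The plan breaks into three pieces: identifying $\Phi_\infty$ as the pointwise $p \to \infty$ limit of the maps $\Phi_p$ from Theorem~\ref{thm:isometry:nonperiodic}, verifying the isometry property in its own right, and showing that $\Phi_\infty$ is a bijection with smooth inverse between the two Fr\'echet manifolds in question.

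For the limit assertion, the starting point is the elementary identity $\lim_{p \to \infty} p(a^{1/p} - 1) = \log a$ for $a > 0$, applied pointwise to $a = \varphi'(x)$. Uniformity on compacts follows because $\log \varphi'$ is bounded and smooth, so the convergence $\Phi_p(\varphi) \to \log(\varphi')$ also holds in the relevant Fr\'echet topology. Since the constraint $f > -p$ cutting out the image of $\Phi_p$ in Theorem~\ref{thm:isometry:nonperiodic} becomes vacuous as $p \to \infty$, the natural image of the limit map is all of $W^{\infty,1}(\mathbb R)$.

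For the isometry, the right-invariant $\dot W^{1,\infty}$ Finsler norm at $\varphi$ of a tangent vector $X \in T_\varphi \Diff_{-\infty}(\mathbb R)$ should read
\[
\| (X \circ \varphi^{-1})' \|_{L^\infty} = \left\| \frac{X'}{\varphi'} \right\|_{L^\infty},
\]
where the equality uses the invariance of the $L^\infty$ norm under orientation-preserving reparametrization by $\varphi^{-1}$. The differential of $\Phi_\infty$ at $\varphi$ applied to $X$ is exactly $X'/\varphi'$, so pulling the $L^\infty$ norm on $W^{\infty,1}(\mathbb R)$ back through $\Phi_\infty$ returns the same quantity. Bijectivity on each tangent space is immediate from the explicit formula, since $\varphi' > 0$.

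The main obstacle is checking that $\Phi_\infty$ actually maps into $W^{\infty,1}(\mathbb R)$ and that the candidate inverse in~\eqref{phiinverse} lands back in $\Diff_{-\infty}(\mathbb R)$. Concretely, I need the implications $f \in W^{\infty,1}(\mathbb R) \Longrightarrow e^f - 1 \in W^{\infty,1}(\mathbb R)$, and conversely $\varphi' - 1 \in W^{\infty,1}(\mathbb R)$ with $\varphi' > 0 \Longrightarrow \log(\varphi') \in W^{\infty,1}(\mathbb R)$. Both rely on the Sobolev embedding $W^{1,1}(\mathbb R) \hookrightarrow C_0(\mathbb R)$, which forces $e^f$ to be bounded, on Fa\`{a} di Bruno's formula to express the higher derivatives of $e^f$ or of $\log(1+g)$ as polynomial combinations of the derivatives of $f$ or $g$ times a bounded factor $e^f$ or a bounded power of $1/\varphi'$, and on the inequality $|e^f - 1| \leq e^{\|f\|_\infty} |f|$ to pass from $f \in L^1$ to $e^f - 1 \in L^1$. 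Smoothness of $\Phi_\infty$ and of its inverse as maps of Fr\'echet manifolds then follows from the smooth dependence of these constructions on $f$ in every $W^{k,1}$ topology. The most delicate bookkeeping is the $\log(\varphi')$ side in the regime where $\|\varphi' - 1\|_\infty$ is not small, since one cannot simply Taylor-expand $\log(1+g)$, and the bounds on the factors $1/\varphi'$ coming from the strict positivity constraint have to be propagated through the chain rule at every order.
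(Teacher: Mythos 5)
Your proposal is essentially correct and, if anything, more careful than the paper's own proof, but it takes a genuinely different route on two points. First, for the limit statement the paper does not argue on the forward maps at all: it applies the monotone convergence theorem to the \emph{inverse} maps, using that $\left(1+\tfrac{f}{p}\right)^p$ increases to $e^{f}$, to conclude $\Phi_p^{-1}\to\Phi_\infty^{-1}$ in formula~\eqref{phiinverse}, and then differentiates the inverse to recover $\varphi'=e^{f}$. You instead work directly with $\Phi_p$ via the elementary identity $p(a^{1/p}-1)\to\log a$ (a decreasing limit, so monotone convergence would serve you equally well); this is arguably closer to what the statement literally asserts. One caveat on your side: uniform convergence on compacts does not by itself give convergence in the Fr\'echet topology of $W^{\infty,1}(\mathbb R)$, since the seminorms there are global $L^1$ norms of all derivatives over the whole line; you would need a dominating integrable bound (which the monotonicity in $p$ supplies for free) --- though the paper is no more precise about the topology of convergence than you are. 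Second, on well-definedness the paper merely cites the inequality $\log(1+\alpha)<\alpha$, which only bounds $\log(\varphi')$ from above and says nothing about higher derivatives or about the regime where $\varphi'$ is close to $0$; your appeal to the embedding $W^{1,1}(\mathbb R)\hookrightarrow C_0(\mathbb R)$ (forcing $\varphi'\to 1$ at $\pm\infty$, hence $\varphi'$ bounded away from $0$), to Fa\`a di Bruno for the higher derivatives, and to $|e^{f}-1|\le e^{\|f\|_\infty}|f|$ fills genuine gaps the paper leaves open. Your explicit isometry computation $D_\varphi\Phi_\infty(X)=X'/\varphi'$ together with the reparametrization invariance of the $L^\infty$ norm is exactly the ``similar calculation'' the paper omits. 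In short: same theorem, same skeleton, but you prove the limit on the forward rather than the inverse maps and you supply the analytic details the paper waves at.
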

\begin{proof}
Since $\varphi' -1 \in W^{\infty,1}(\RR)$, $\operatorname{log}(\varphi')$ is also in $W^{\infty,1}(\RR)$ (for $\operatorname{log}(1+\alpha) < \alpha$). Similarly, it is easy to see that the image of the inverse is indeed in $\Diff_{-\infty}(\RR)$. The calculation of isometry is similar to the one carried out in the preceding theorem, so we omit the proof. Next $\Phi_\infty$ is onto, since $\varphi'>0$ for orientation preserving diffeomorphisms $\varphi$ and direct calculation shows the existence of the inverse. It remains to prove the statement on the limit. By the monotone convergence theorem, $\Phi_p^{-1}$ converges to $\Phi^{-1}_\infty$ in formula~\ref{phiinverse}. Then a straightforward calculation shows 
\begin{align*}  
e^{f(x)}=D_{f,f'}\Phi^{-1}_\infty=\varphi',  
\end{align*}
which concludes the proof.
\end{proof}
From this corollary, it is clear that $\Diff_{-\infty}(\mathbb R)$ is contractible, since $W^{\infty,1}(\RR)$ is contractible.
\begin{rem}
As shown in~\cite{bauer2021paper}, the analysis above holds for spaces
of lower regularity, e.g., the space of integrable bi-Lipschitz homeomorphisms:
\begin{multline}
\biLip^{1,1}_{-\infty}(\RR) := \Big\{ \varphi=\id+f:   \varphi \text{ is invertible, }  \varphi^{-1}=\id+g,\\ f,g \in \dot{W}^{1,1}(\RR)\cap W^{1,\infty}(\RR), \,\, \lim_{x\to -\infty} f(x) = 0\Big\}.
\end{multline}
Here $\dot W^{1,1}(\RR)$ is the space of functions with an integrable derivative. This space is a manifold and a topological group, in which composition from the right is smooth (known as a half Lie-group; see e.g.,~\cite{kriegl2015exotic}).
Its Lie-algebra is the space
\[
\mathfrak{lip}^{1,1}_{-\infty}(\RR) := \{ u \in W^{1,\infty}(\RR)\cap \dot{W}^{1,1}(\RR) ~:~ \lim_{x\to -\infty} u(x) =0 \},
\]
on which all the $\dot{W}^{1,p}$ Finsler norms are well defined and also the $p\to \infty$ limit make sense. 

This extension to lower regularity has important implications  when compared to the universal Teichm\"{u}ller space, which is determined by the set of quasiconformal homeomorphisms of $\CC$ fixing $0,1$ and $\infty$ that is holomorphic when restricted to the lower half plane $\mathbb H^*$. By Ahlfors-Beurling extension theorem, this set can be further reduced to that of $\RR$-quasisymmetric homeomorphisms $f|_{\RR}:\RR\to\RR$ fixing $0,1$ with a holomorphic homeomorphism on the $\mathbb H^*$; see e.g.,~\cite{hubbard2006}. The fact that $f$ is holomorphic on $\mathbb H^*$ forces much stronger regularity on the boundary map: $f|_{\RR}$ are bi-Lipschitz homeomorphisms; for an investigation of the regularity of universal Teichm\"{u}ller space modeled on a unit disk, one can refer to~\cite{ratiu2015}. Therefore, our discussion of diffeomorphism groups on $\RR$ is related to the geometry of universal Teichm\"{u}ller space in an unexpected way.
\end{rem}
For simplicity and to avoid potential problems of regularity, we will still work on the Fr\'echet Lie group $\Diff_{-\infty}(\mathbb R)$ in the following subsections.

\subsection{Relations to the Schwarzian derivative}
One natural way to introduce the Schwarzian derivative is to consider the function of two variables 
\begin{equation}\label{potentialinfty}
 V(y,z)=\operatorname{log}\left(\frac{\varphi(y)-\varphi(z)}{y-z}\right),   
\end{equation}
for any $\varphi\in \Diff_{-\infty}(\mathbb R)$;  see e.g.,~\cite{schiffer1966} for Riemann surface case. Note that $F$ is well-defined, since $\varphi'>0$ for orientation preserving diffeomorphisms $\varphi$. Then the diagonal elements $u_\varphi$ are exactly onto the image of  $\Phi_\infty$,
\begin{equation}
u_\varphi(x):=\Phi_\infty(\varphi(x))=\operatorname{log}(\varphi')=V(x,x).    
\end{equation}
The second mixed partial derivative of $F$ is given by
\begin{align*}
\frac{\partial^2V(y,z)}{\partial y\partial z} =\frac{\varphi'(y)\varphi'(z)}{(\varphi(y)-\varphi(z))^2}-\frac{1}{(y-z)^2},    
\end{align*}
and the Schwarzian derivative is exactly the diagonal elements of the mixed derivative,
\begin{equation}
S\{\varphi\}(x):=   6\frac{\partial^2V(y,z)}{\partial y\partial z} |_{(y,z)=(x,x)}. 
\end{equation}
The diagonals $u_\varphi$ serve as the potential for the Schwarzian derivative $S$, similar to the K\"{a}hler potential,
\begin{equation}
S\{\varphi,x\}:=S\{\varphi\}(x)dx\otimes dx=\partial_y\partial_z P |_{(y,z)=(x,x)}.   
\end{equation}
The Schwarzian derivative of the composition $\varphi\circ\phi$ is given by the chain rule 
\begin{equation}\label{chainrule}
 S\{\varphi\circ\phi\}= S\{\varphi\}\circ\phi\cdot \phi'^2+S\{\phi\}.
\end{equation}
Note that from the construction of the Schwarzian,  any M\"{o}bius transformation $\varphi$ has vanishing Schwarzian $S\{\varphi\}$, and then the composition $\varphi\circ\phi$ has the
same Schwarzian derivative as $\phi$. Consequently the Schwarzian can be viewed as a sort of metric on right cosets of M\"{o}bius transformations (linear mappings in this case) that measures of how much a transformation derivates from a M\"{o}bius transformation.

Surely we have a quantitative relation between $u_\varphi$ and $S\{\varphi\}(x)$:
\begin{equation}\label{schwrzexp}
S\{\varphi\}(x)=u_\varphi''(x)-\frac{1}{2}u_\varphi'(x)^2.    
\end{equation}
Since any order derivative of $u_\varphi$  remains in $W^{\infty,1}(\mathbb R)$, so does the Schwarzian derivative $S\{\varphi\}$. Consequently it  has a unique preimage $\varphi_{S\{\varphi\}}$ in $\Diff_{-\infty}(\mathbb R)$, which can be expressed by using formula~\ref{phiinverse},
\begin{equation}
\varphi_{S\{\varphi\}}(x)=x+\int_{-\infty}^x \left( e^{u_\varphi''(\tilde x)-\frac{1}{2}u_\varphi'(\tilde x)^2}-1\right)d\tilde x.    
\end{equation}
From formula~\ref{chainrule} and an induction, it is easily seen that all iterations of a function with negative (or positive) Schwarzian will remain negative (resp. positive). Then we have the following proposition of interest in one dimensional dynamics.
\begin{prop}\label{1ddynamics}
 If $S\{\varphi\}>0$ (resp. $S\{\varphi\}<0$), then the tangent
 ${\varphi^n_{S\{\varphi^m\}}}'>1$ (resp. ${\varphi^n_{S\{\varphi^m\}}}'<1$) for any positive integer $n$ and $m$.   
\end{prop}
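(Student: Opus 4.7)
The plan is to combine the explicit derivative formula coming from Corollary~\ref{inftyisom} with the Schwarzian chain rule (equation~\ref{chainrule}) and with the ordinary chain rule for iterated maps. The key observation is that, by its very definition as $\Phi_\infty^{-1}$ applied to the function $S\{\psi\}$, the diffeomorphism $\varphi_{S\{\psi\}}$ satisfies
\begin{equation*}
\varphi_{S\{\psi\}}'(x) = \exp\brk{S\{\psi\}(x)},
\end{equation*}
exactly as was computed for $\Phi_\infty^{-1}$ in the proof of Corollary~\ref{inftyisom}. Consequently, pointwise strict positivity (resp.\ negativity) of $S\{\psi\}$ is \emph{equivalent} to $\varphi_{S\{\psi\}}' > 1$ (resp.\ $< 1$).

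Next I would verify by induction on $m \geq 1$ that $S\{\varphi^m\}$ inherits the sign of $S\{\varphi\}$ pointwise. Writing $\varphi^{m+1} = \varphi^m \circ \varphi$, the chain rule~\ref{chainrule} gives
\begin{equation*}
S\{\varphi^{m+1}\}(x) = S\{\varphi^m\}\brk{\varphi(x)} \cdot \varphi'(x)^2 + S\{\varphi\}(x),
\end{equation*}
and since $\varphi'(x)^2 > 0$ everywhere, both summands carry the sign of $S\{\varphi\}$ by the inductive hypothesis, so the same is true for the sum. Along the way one should note that $S\{\varphi^m\}\in W^{\infty,1}(\RR)$, which is needed for $\varphi_{S\{\varphi^m\}}$ to be well-defined via formula~\ref{phiinverse}; this is immediate from~\ref{schwrzexp} applied to $\varphi^m \in \Diff_{-\infty}(\RR)$.

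Finally, combining the first two steps, the derivative of the $n$-fold iterate of $\varphi_{S\{\varphi^m\}}$ is computed by the ordinary chain rule as
\begin{equation*}
\brk{\varphi_{S\{\varphi^m\}}^n}'(x) = \prod_{k=0}^{n-1} \varphi_{S\{\varphi^m\}}'\brk{\varphi_{S\{\varphi^m\}}^k(x)} = \prod_{k=0}^{n-1} \exp\brk{S\{\varphi^m\}\brk{\varphi_{S\{\varphi^m\}}^k(x)}}.
\end{equation*}
Since by the previous paragraph $S\{\varphi^m\}>0$ (resp.\ $<0$) everywhere, each factor in the product is strictly greater (resp.\ less) than $1$, and the inequality is inherited by the product.

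Regarding the main obstacle: the proof is essentially a bookkeeping consequence of the identity $\varphi_{S\{\psi\}}' = e^{S\{\psi\}}$, so there is no substantial analytic difficulty. The only mildly delicate point is to propagate \emph{strict} pointwise inequalities through the induction rather than non-strict ones, which is why it matters that $\varphi'$ is strictly positive on all of $\RR$ and that $\varphi_{S\{\varphi^m\}}^k(x)\in\RR$ stays in the domain at every stage.
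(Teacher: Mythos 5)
Your proof is correct and follows essentially the same route as the paper's: reduce via the ordinary chain rule for iterates to showing $\varphi_{S\{\varphi^m\}}'>1$ (resp.\ $<1$), obtain the sign of $S\{\varphi^m\}$ by induction from the Schwarzian chain rule, and conclude from the identity $\varphi_{S\{\psi\}}'=e^{S\{\psi\}}$ coming from Corollary~\ref{inftyisom}. Your write-up merely makes explicit the steps the paper leaves implicit (the exponential formula for the derivative and the $W^{\infty,1}$ membership of $S\{\varphi^m\}$).
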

\begin{proof}
Assume that $S\{\varphi\}>0$. By the chain rule of derivative $(\varphi^n)'$, we see that $(\varphi^n)'>1$ if $\varphi'>1$. Hence it suffices to show ${\varphi_{S\{\varphi^m\}}}'>1$, which is proved by the positivity of $S\{\varphi^m\}$.
\end{proof}

\subsection{The $L^p$-approximation for Schwarzian derivative}
From the preceding discussion, we see that the real Schwarzian derivative can be derived from the Schwarz potential $V$ defined in equation~\ref{potentialinfty}. Since it is related to the isometry $\Phi_\infty$ with respect to the $L^\infty$-metric, it is natural to consider the $L^p$-potential $V_p$ with respect to the $L^p$-metric
\begin{equation}
V_p(y,z):=p\left\{\left(\frac{\varphi(y)-\varphi(z)}{y-z}\right)^{1/p}-1\right\}.    
\end{equation}
Similar to the previous case, the restriction of $V_p$ to the diagonal is $\Phi_p$,
\begin{align*}
V_p(x,x)=\Phi_p(\varphi).    
\end{align*}
The $L^p$-Schwarzian is then given by
\begin{equation}
S_{p}\{\varphi\}(x):=   6\frac{\partial^2V_p(y,z)}{\partial y\partial z} |_{(y,z)=(x,x)}.   
\end{equation}
It turns out that the pointwise limit of $S_p$ is just the Schwarzian $S$.
\begin{prop}
 The $L^p$-Schwarzian $S_{p}\{\varphi\}$ has the following expression
 \begin{equation}
 S_{p}\{\varphi\}(x)=\left(\frac{3}{2p}(\frac{\varphi''}{\varphi'})^2+S\{\varphi\}(x)\right)\varphi'^{1/p}.    
 \end{equation}
 Therefore, the limit of $S_p$ is the Schwarzian $S$ when $p\to\infty$.
\end{prop}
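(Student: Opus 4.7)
The plan is to compute the mixed partial derivative $\partial_y \partial_z V_p(y,z)$ at the diagonal point $(y,z)=(x,x)$ by a bivariate Taylor expansion. Set $u=y-x$, $v=z-x$, and expand $\varphi(y)$ and $\varphi(z)$ to third order around $x$. Since the difference $\varphi(y)-\varphi(z)$ is antisymmetric in $u,v$, the factor $y-z=u-v$ cancels cleanly, leaving the symmetric expression
\begin{equation*}
\frac{\varphi(y)-\varphi(z)}{y-z}=\varphi'(x)+\tfrac12\varphi''(x)(u+v)+\tfrac16\varphi'''(x)(u^2+uv+v^2)+O(|(u,v)|^3).
\end{equation*}
Factoring out $\varphi'(x)$ writes this as $\varphi'(x)(1+B)$ where $B$ is $O(|(u,v)|)$, which is in the regime of the binomial series.

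Next I would apply the expansion $(1+B)^{1/p}=1+\frac{1}{p}B+\frac{1}{2p}(\frac{1}{p}-1)B^2+O(B^3)$. Since $\partial_u\partial_v$ evaluated at the origin just reads off the coefficient of $uv$, I only need to track that coefficient. From the expression of $B$ it equals $\frac{1}{6}\varphi'''/\varphi'$, and from $B^2$, noting that $(u+v)^2$ contributes $2uv$, it equals $\frac{1}{2}(\varphi''/\varphi')^2$. Combining these via $V_p=p(\varphi'^{1/p}(1+B)^{1/p}-1)$ and multiplying by the prefactor $6$ in the definition of $S_p$, I obtain
\begin{equation*}
S_p\{\varphi\}(x)=\varphi'(x)^{1/p}\left[\frac{\varphi'''}{\varphi'}+\frac{3(1-p)}{2p}\left(\frac{\varphi''}{\varphi'}\right)^2\right].
\end{equation*}

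To match the stated formula, I would then invoke the classical identity $S\{\varphi\}=\frac{\varphi'''}{\varphi'}-\frac{3}{2}\left(\frac{\varphi''}{\varphi'}\right)^2$, which follows from equation~\eqref{schwrzexp} together with $u_\varphi=\log\varphi'$, $u_\varphi'=\varphi''/\varphi'$, and $u_\varphi''=\varphi'''/\varphi'-(\varphi''/\varphi')^2$. Substituting gives
\begin{equation*}
\frac{\varphi'''}{\varphi'}+\frac{3(1-p)}{2p}\left(\frac{\varphi''}{\varphi'}\right)^2=\frac{3}{2p}\left(\frac{\varphi''}{\varphi'}\right)^2+S\{\varphi\}(x),
\end{equation*}
which yields the claimed identity. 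The pointwise limit as $p\to\infty$ is then immediate: the prefactor $\varphi'(x)^{1/p}\to 1$ since $\varphi'(x)>0$, and the explicit $\frac{3}{2p}$ term vanishes, leaving exactly $S\{\varphi\}(x)$.

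I do not anticipate a serious obstacle. The only care needed is to carry the Taylor expansion far enough (to third order in $\varphi$) and to correctly account for both the linear and quadratic contributions of the binomial series to the $uv$-coefficient; the higher-order terms in $B^3$ and beyond contribute only to higher powers of $(u,v)$ and can be discarded. All manipulations are pointwise and smooth, so no analytic subtlety enters.
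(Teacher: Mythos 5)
Your proposal is correct: the Taylor coefficients, the binomial expansion, and the final bookkeeping all check out, and the reduction of the limit statement to $\varphi'^{1/p}\to 1$ is fine since $\varphi'>0$. The route differs from the paper's in a way worth noting. The paper exploits the relation $F=\exp\brk{V/p}$ between the $L^p$-kernel $F(y,z)=\brk{\tfrac{\varphi(y)-\varphi(z)}{y-z}}^{1/p}$ and the logarithmic Schwarz potential $V$ of equation~\eqref{potentialinfty}: differentiating this identity gives
\begin{equation*}
\partial_y\partial_z F = F\left(\frac{1}{p}\,\partial_y\partial_z V+\frac{1}{p^2}\,\partial_y V\,\partial_z V\right),
\end{equation*}
and then one only needs the two diagonal values $\partial_y\partial_z V|_{(x,x)}=\tfrac16 S\{\varphi\}(x)$ (the definition of $S$) and $\partial_y V|_{(x,x)}=\tfrac{\varphi''}{2\varphi'}$, so the Schwarzian enters as a black box and no closed-form expression for it is ever needed. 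You instead expand everything from scratch: a bivariate Taylor expansion of the difference quotient, the binomial series for the $1/p$-th power, and finally the classical identity $S\{\varphi\}=\tfrac{\varphi'''}{\varphi'}-\tfrac32\brk{\tfrac{\varphi''}{\varphi'}}^2$ (correctly justified via equation~\eqref{schwrzexp}) to repackage the answer. Your version is more self-contained and makes the intermediate formula $S_p\{\varphi\}=\varphi'^{1/p}\bigl[\tfrac{\varphi'''}{\varphi'}+\tfrac{3(1-p)}{2p}\brk{\tfrac{\varphi''}{\varphi'}}^2\bigr]$ explicit, at the cost of more computation; the paper's version is shorter and makes the structural relationship between $S_p$ and $S$ (as mixed derivatives of exponentially related potentials) more transparent.
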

\begin{proof}
It is enough to calculate the mixed derivative for
\begin{align*}
F(y,z)=\left(\frac{\varphi(y)-\varphi(z)}{y-z}\right)^{1/p}.   
\end{align*}
After taking log and subsequent calculations, we get
\begin{align*}
p\partial_y\partial_z F|_{(y,z)=(x,x)}&=pF\frac{\partial_yF}{F}\frac{\partial_zF}{F}|_{(y,z)=(x,x)}+\frac{1}{6}S\{\varphi\}(x)F
\\&=\frac{1}{p}(\frac{\varphi''}{2\varphi'})^2\varphi'^{1/p}+\frac{1}{6}S\{\varphi\}(x)\varphi'^{1/p}.
\end{align*}
Here we use another equality
\begin{align*}
\frac{\partial_zF}{F}|_{(y,z)=(x,x)}=\frac{\partial_yF}{F}|_{(y,z)=(x,x)}=\frac{1}{p}\frac{\varphi''}{2\varphi'}.   
\end{align*}
\end{proof}

The $L^p$-Schwarzian no longer vanishes for  M\"{o}bius transformations, and it satisfies a new chain rule.
\begin{prop}
The chain rule of $L^p$-Schwarzian is given by
\begin{align*}
S_p\{\varphi\circ\phi\}= S_p\{\varphi\}\circ\phi\cdot \phi'^{2+1/p}+S_p\{\phi\}\cdot(\varphi'\circ\phi)^{1/p}+\frac{3}{p}\frac{\varphi''\circ\phi}{\varphi'\circ\phi}\phi''(\varphi\circ\phi)'^{1/p}.     
\end{align*}   
\end{prop}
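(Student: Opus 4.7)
The plan is to derive the chain rule directly from the explicit representation
\[
S_{p}\{\varphi\}=\left(\frac{3}{2p}\left(\frac{\varphi''}{\varphi'}\right)^{2}+S\{\varphi\}\right)\varphi'^{\,1/p}
\]
established in the preceding proposition, combined with the classical chain rule for the Schwarzian
$S\{\varphi\circ\phi\}=S\{\varphi\}\circ\phi\cdot\phi'^{\,2}+S\{\phi\}$, which is all we need from the $p=\infty$ case.

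First I would write $\psi=\varphi\circ\phi$, compute $\psi'=(\varphi'\circ\phi)\phi'$ and
$\psi''=(\varphi''\circ\phi)(\phi')^{2}+(\varphi'\circ\phi)\phi''$, and from these expand
\[
\left(\frac{\psi''}{\psi'}\right)^{2}=\left(\frac{\varphi''\circ\phi}{\varphi'\circ\phi}\right)^{2}\!(\phi')^{2}+2\,\frac{\varphi''\circ\phi}{\varphi'\circ\phi}\,\phi''+\left(\frac{\phi''}{\phi'}\right)^{2},
\]
together with $\psi'^{\,1/p}=(\varphi'\circ\phi)^{1/p}(\phi')^{1/p}$. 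These are the only mechanical ingredients.

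Next I would substitute everything into the formula for $S_{p}\{\psi\}$ and collect the result into three groups. The $(\phi')^{2}$-terms combine, after factoring out $(\phi')^{2}(\varphi'\circ\phi)^{1/p}(\phi')^{1/p}=(\varphi'\circ\phi)^{1/p}(\phi')^{2+1/p}$, into exactly $S_{p}\{\varphi\}\circ\phi\cdot\phi'^{\,2+1/p}$ by invoking the identity for $S_p$ applied to $\varphi$. The terms $\frac{3}{2p}(\phi''/\phi')^{2}+S\{\phi\}$, multiplied by $(\varphi'\circ\phi)^{1/p}(\phi')^{1/p}$, collapse to $S_{p}\{\phi\}\cdot(\varphi'\circ\phi)^{1/p}$ for the same reason. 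The remaining cross term $\frac{3}{2p}\cdot 2\,\frac{\varphi''\circ\phi}{\varphi'\circ\phi}\phi''\cdot\psi'^{\,1/p}$ produces the asserted correction $\frac{3}{p}\frac{\varphi''\circ\phi}{\varphi'\circ\phi}\phi''(\varphi\circ\phi)'^{\,1/p}$, using $\psi'^{\,1/p}=(\varphi\circ\phi)'^{\,1/p}$.

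The computation is algebraic and there is no genuine obstacle; the only risk is bookkeeping — in particular, keeping track of the composition $\circ\phi$ inside $\varphi''/\varphi'$ and of the three different powers $(\phi')^{2+1/p}$, $(\varphi'\circ\phi)^{1/p}$, and $(\varphi\circ\phi)'^{\,1/p}$ that appear. The nontrivial structural input is entirely the classical Schwarzian chain rule; the $1/p$ corrections come automatically from the single factor $\varphi'^{\,1/p}$ in the representation of $S_p$. In particular this makes transparent that one recovers the classical rule as $p\to\infty$, since the cross term and the $(3/2p)$ term both vanish in the limit.
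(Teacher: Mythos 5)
Your proof is correct: the algebra checks out, and the three groups you identify do reassemble into exactly the stated formula (the cross term $\tfrac{3}{2p}\cdot 2\,\tfrac{\varphi''\circ\phi}{\varphi'\circ\phi}\phi''\cdot\psi'^{1/p}$ indeed yields the $\tfrac{3}{p}$ correction). However, your route differs from the paper's. You start from the closed-form expression $S_p\{\varphi\}=\bigl(\tfrac{3}{2p}(\varphi''/\varphi')^2+S\{\varphi\}\bigr)\varphi'^{1/p}$ together with the classical Schwarzian chain rule, and verify the identity by direct expansion of $\psi''/\psi'$ for $\psi=\varphi\circ\phi$. The paper instead works at the level of the two-variable potential: it factors
\begin{align*}
 \left(\frac{\varphi\circ\phi(y)-\varphi\circ\phi(z)}{y-z}\right)^{1/p}=\left(\frac{\varphi\circ\phi(y)-\varphi\circ\phi(z)}{\phi(y)-\phi(z)}\right)^{1/p}\left(\frac{\phi(y)-\phi(z)}{y-z}\right)^{1/p}
\end{align*}
and applies the Leibniz rule to the mixed partial $\partial_y\partial_z$ on the diagonal, so the three terms of the chain rule arise as the $\partial_y\partial_z G\cdot H$, $G\cdot\partial_y\partial_z H$, and first-derivative cross terms of the product. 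The paper's derivation is structurally parallel to the classical potential-theoretic derivation of the Schwarzian chain rule and does not presuppose the explicit formula for $S_p$; yours is more mechanical but arguably more transparent given that the explicit formula has just been established, and it makes the $p\to\infty$ degeneration immediate. Both arguments are sound.
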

\begin{proof}
The only thing we need to do here is to proceed with the derivative by parts by using the following equality.
\begin{align*}
 \left(\frac{\varphi\circ\phi(y)-\varphi\circ\phi(z)}{y-z}\right)^{1/p}=\left(\frac{\varphi\circ\phi(y)-\varphi\circ\phi(z)}{\phi(y)-\phi(z)}\right)^{1/p}\left(\frac{\phi(y)-\phi(z)}{y-z}\right)^{1/p}.  
\end{align*}
It is also clear that the chain rule converges to that of Schwarzian for $p\to\infty$.  
\end{proof}
\begin{rem}
The $L^p$-approximation here reminds us of doing the same thing for Riemann surfaces. But it is much harder, or maybe impossible. Since then we need to consider the holomorphic homeomorphisms in the lower half plane $\mathbb H^*$. These holomorphic functions are determined by the Beltrami equations in the upper half plane $\mathbb H$. The crucial problem here is the Beltrami forms $\mu$ belong to the $L^\infty$ space with norm less than one, and this regularity cannot be changed to $L^p$ for the use of the mapping theorem.   
\end{rem}

\subsection{The real Bers imbedding}
To include all the orientation preserving linear mappings, we enlarge $\Diff_{-\infty}(\mathbb R)$ to a bigger Fr\'echet Lie group
\begin{align*}
\widetilde\Diff_{-\infty}(\mathbb R)=\left\{\varphi=a\cdot\id+b+f: (\varphi-b)/a\in\Diff_{-\infty}(\mathbb R), a>0,\ b\in\RR \right\}.    
\end{align*}
It is easy to see that $\Diff_{-\infty}(\mathbb R)$ is the quotient of $\widetilde\Diff_{-\infty}(\mathbb R)$ by the real affine group of $\RR$, i.e.,
\begin{equation}\label{quotentgroup}
\Diff_{-\infty}(\mathbb R)=\widetilde\Diff_{-\infty}(\mathbb R)/\operatorname{A}(\mathbb R),    
\end{equation}
where the real affine group $\operatorname{A}(\mathbb R)=\{\varphi: \varphi(x)=ax+b,\; a>0,  b\in\RR\}$ acts by right cosets.
Indeed, we have the following identity
\begin{align*}
a\cdot\id+b+f=(a\cdot\id+b)\circ(\id+f/a).    
\end{align*}
We also introduce the space of all positive vertical shifts of $W^{\infty,1}(\mathbb R)$,
\begin{align*}
W_+^{\infty,1}(\mathbb R)=  \left\{g+a:g\in W^{\infty,1}(\mathbb R), \ a>0\right\}.  
\end{align*}
Then $\Phi_\infty$ in Corollary~\ref{inftyisom} can be extended to $\widetilde\Diff_{-\infty}(\mathbb R)$, and is no longer an injective isometry, but rather a covering map. 
\begin{prop}
The mapping
\begin{equation}
\Phi_\infty: \begin{cases}
\left(\widetilde\Diff_{-\infty}(\mathbb R), \dot W^{1,\infty}\right)  	&\to  \left(W_+^{\infty,1}(\mathbb R),L^\infty\right)\\ 
\varphi 						&\mapsto \operatorname{log}(\varphi')
\end{cases}
\end{equation}
is a covering map with the fibre isomorphic to $\RR$. Furthermore, $\operatorname{\Phi_\infty}$ is the projection of a trivial bundle.   
\end{prop}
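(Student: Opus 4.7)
The plan is to verify the proposition in three clean steps: check that $\Phi_\infty$ is well-defined with image in $W^{\infty,1}_+(\mathbb R)$, identify the fibres as $\mathbb R$-orbits, then exhibit an explicit global smooth section that trivializes the bundle.

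For well-definedness, given $\varphi = a\cdot\id + b + f \in \widetilde\Diff_{-\infty}(\mathbb R)$ I factor
\[
\log(\varphi'(x)) \;=\; \log(a + f'(x)) \;=\; \log a \,+\, \log\bigl(1 + f'(x)/a\bigr),
\]
and note that $f'/a \in W^{\infty,1}(\mathbb R)$ with $f'/a > -1$; since $\log(1+\cdot)$ is smooth and vanishes at $0$, the composition stays in $W^{\infty,1}(\mathbb R)$, so $\log(\varphi') = \log a + g$ with $g \in W^{\infty,1}(\mathbb R)$ and thus lies in $W^{\infty,1}_+(\mathbb R)$. For the fibre, $\Phi_\infty(\varphi_1) = \Phi_\infty(\varphi_2)$ forces $\varphi_1' = \varphi_2'$, hence $\varphi_2 - \varphi_1$ is a real constant, and conversely the affine action $\varphi \mapsto \varphi + b$ preserves $\widetilde\Diff_{-\infty}(\mathbb R)$, commutes with $\Phi_\infty$, and acts freely and transitively on each fibre, making the fibre canonically an $\mathbb R$-torsor.

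For the trivialization, given $u \in W^{\infty,1}_+(\mathbb R)$ I decompose uniquely $u = g + c$ with $g \in W^{\infty,1}(\mathbb R)$ and $c = \lim_{x\to -\infty} u(x)$, set $a = e^c$, and define the candidate section
\[
s(u)(x) \;:=\; a x + \int_{-\infty}^x \bigl(e^{u(\tilde x)} - a\bigr)\,d\tilde x.
\]
The integrand equals $a(e^g - 1) \in W^{\infty,1}(\mathbb R)$, so the integral is absolutely convergent, $s(u)$ is smooth with $s(u)' = e^u > 0$, and $s(u) - a\cdot\id \to 0$ at $-\infty$; hence $s(u) \in \widetilde\Diff_{-\infty}(\mathbb R)$ with translation parameter $0$, and $\Phi_\infty \circ s = \id$. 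The map
\[
\Psi : \widetilde\Diff_{-\infty}(\mathbb R) \longrightarrow W^{\infty,1}_+(\mathbb R) \times \mathbb R, \qquad \varphi \longmapsto \bigl(\Phi_\infty(\varphi),\; \varphi(0) - s(\Phi_\infty(\varphi))(0)\bigr),
\]
is then a smooth diffeomorphism with inverse $(u,b) \mapsto s(u) + b$ and first coordinate equal to $\Phi_\infty$, which realizes $\Phi_\infty$ as the projection of a trivial $\mathbb R$-bundle.

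The main technical obstacle is the analytic lemma that smooth composition with functions vanishing at $0$ (specifically $\log(1+\cdot)$ and $\exp(\cdot) - 1$) preserves $W^{\infty,1}(\mathbb R)$ and depends smoothly on its argument as a map of Fr\'echet spaces; this is required both for landing $\Phi_\infty$ inside $W^{\infty,1}_+(\mathbb R)$ and for the convergence and smoothness of $s$. It reduces to the observation that $W^{\infty,1}(\mathbb R)\subset C_0^\infty(\mathbb R)$ is a multiplicatively closed algebra of bounded decaying smooth functions at each finite Sobolev order, combined with Fa\`a di Bruno applied to the Taylor expansion of the composing function at $0$: each term in the expansion remains in $W^{\infty,1}$ by iterated application of the Leibniz rule. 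Once this is in hand, every remaining step reduces to direct verification.
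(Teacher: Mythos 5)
Your proof is correct, and on the one point the paper actually argues it follows the same route: the paper's entire written proof is the fibre computation, observing that the decomposition $\varphi=a\cdot\id+b+f$ is unique and that $\log\varphi'=\log(a_0+f_0')$ forces $f'-f_0'=a_0-a$ to be a constant lying in $W^{\infty,1}(\RR)$, hence zero, leaving only $b$ free --- which is exactly your observation that $\varphi_1'=\varphi_2'$ makes $\varphi_1-\varphi_2$ constant, recast as a free and transitive translation action on each fibre. Where you go beyond the paper is in proving the two claims it only asserts: surjectivity and triviality of the bundle. Read literally, the paper's proof establishes neither; your explicit section $s(u)=ax+\int_{-\infty}^{x}\bigl(e^{u}-a\bigr)$ with $a=e^{\lim_{x\to-\infty}u}$, together with the chart $\Psi(\varphi)=\bigl(\Phi_\infty(\varphi),\,\varphi(0)-s(\Phi_\infty(\varphi))(0)\bigr)$, supplies precisely the missing pieces, and your reduction of the analytic content to the lemma that composition with $\log(1+\cdot)$ and $e^{(\cdot)}-1$ preserves $W^{\infty,1}(\RR)$ is the same mechanism the paper already invokes in Corollary~\ref{inftyisom}. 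One caveat you inherit from the paper's statement rather than introduce yourself: since $\log a$ is negative for $0<a<1$, the image of $\Phi_\infty$ is the set of \emph{arbitrary} real vertical shifts $\{g+c: g\in W^{\infty,1}(\RR),\ c\in\RR\}$ rather than the space $W_+^{\infty,1}(\RR)$ of \emph{positive} shifts as defined in the paper; your sentence ``thus lies in $W^{\infty,1}_+(\RR)$'' is only true under that corrected reading of the codomain, and your section and trivialization work verbatim once the codomain is so corrected.
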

\begin{proof}
It suffices to characterize the fibre of $\Phi_\infty$. Note that the decomposition $(a,b,f)$ of $\varphi$ in $\widetilde\Diff_{-\infty}(\mathbb R)$ is unique, due to the condition on $f$. Then the preimage $(a,b,f)$ of $\operatorname{log}(a_0+f_0')$ should satisfy $f'=a_0-a+f_0'$. Therefore, we must have $a=a_0$ and $f=f_0$. 
\end{proof}
From the quotient relation~\ref{quotentgroup}, the Fr\'echet Lie group $\operatorname{Diff}_{-\infty}(\RR)$ can be compared to the universal Teichmuller space and it is natural to consider the corresponding Bers imbedding in this space. Firstly we present a lemma showing that this mapping is indeed an embedding.
\begin{lem}
The mapping $\beta$ defined below is an embedding, i.e., the differential $D\beta$ is an injective.     
\end{lem}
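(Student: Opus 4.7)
The plan is to identify $\beta$ as the real Bers map sending the class of $\varphi$ in $\widetilde\Diff_{-\infty}(\RR)/\operatorname{A}(\RR)\cong \Diff_{-\infty}(\RR)$ to its Schwarzian derivative $S\{\varphi\}$. Well-definedness on the quotient follows from the chain rule~\ref{chainrule}: any affine $A\in\operatorname{A}(\RR)$ has $S\{A\}=0$, so $S\{A\circ\varphi\}=S\{\varphi\}$.

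My first step is to compute $D\beta_\varphi$ using the potential representation~\ref{schwrzexp}, namely $S\{\varphi\}=u_\varphi''-\tfrac{1}{2}(u_\varphi')^2$ with $u_\varphi=\operatorname{log}(\varphi')$. For a tangent vector $\delta\varphi\in T_\varphi \widetilde\Diff_{-\infty}(\RR)$, the induced variation of the potential is $\delta u=\delta\varphi'/\varphi'$, and direct differentiation yields
\[
D\beta_\varphi(\delta\varphi)=(\delta u)''-u_\varphi'(\delta u)'.
\]
The second step is to solve the kernel ODE. Writing $w=(\delta u)'$, the vanishing of $D\beta_\varphi(\delta\varphi)$ becomes $w'=u_\varphi'w$, so $w=Ce^{u_\varphi}=C\varphi'$ for some constant $C$. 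Integrating twice gives
\[
\delta u=C\varphi+D,\qquad \delta\varphi=\tfrac{C}{2}\varphi^2+D\varphi+E,
\]
for real constants $C,D,E$.

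The third step, where the genuine work lies, is to rule out $C\neq 0$ using the asymptotic structure of $T_\varphi\widetilde\Diff_{-\infty}(\RR)$. Writing $\varphi=a\cdot\id+b+f$ with $a>0$, $f'\in W^{\infty,1}(\RR)$ and $f(x)\to 0$ as $x\to-\infty$, the term $\tfrac{C}{2}\varphi^2$ grows like $\tfrac{Ca^2}{2}x^2$ at $-\infty$, whereas every vector in $T_\varphi\widetilde\Diff_{-\infty}(\RR)$ has the unique form $\delta a\cdot \id+\delta b+\delta f$ with $\delta f'\in W^{\infty,1}(\RR)$ and $\delta f$ decaying at $-\infty$, so at most linear growth. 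Since $a>0$, this forces $C=0$. The remaining kernel elements $\delta\varphi=D\varphi+E$ are then the infinitesimal generators at $\varphi$ of the left action $\operatorname{A}(\RR)$, obtained by differentiating $((1+\epsilon D)\cdot\id+\epsilon E)\circ\varphi$ at $\epsilon=0$; they span the vertical tangent space of the projection $\widetilde\Diff_{-\infty}(\RR)\to\Diff_{-\infty}(\RR)$ and hence vanish on the quotient, proving that $D\beta$ is injective.

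The main obstacle is the third step: one must verify that the decay of $f$ at $-\infty$, together with the Sobolev condition $\delta f'\in W^{\infty,1}(\RR)$, genuinely obstructs any quadratic contribution in $\delta\varphi$ and that no cancellation among the components $(\delta a,\delta b,\delta f)$ can absorb a $Cx^2$ term. This reduces to checking that $W^{\infty,1}$ regularity plus the boundary condition confines $\delta f$ to sublinear growth, so the only polynomial part admissible in $T_\varphi\widetilde\Diff_{-\infty}(\RR)$ is $\delta a\cdot x+\delta b$.
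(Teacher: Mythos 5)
Your proof is correct, and its core is the same as the paper's: linearize $\beta$ through the potential $u_\varphi=\operatorname{log}(\varphi')$, reduce the kernel condition to the first-order ODE $w'=u_\varphi'w$ for $w=(\delta u)'$, and then exclude the nontrivial solutions using the structure of the function spaces. The difference lies in how the integration constants are disposed of. The paper stays on $\Diff_{-\infty}(\mathbb R)$ and kills both constants at the level of the potential: $\delta f'=C_0e^{f}=C_0\varphi'$ tends to $C_0$ at $\pm\infty$, so it is not integrable unless $C_0=0$, and then the constant $C_1$ cannot lie in $W^{\infty,1}(\mathbb R)$ unless it vanishes. You instead integrate once more to the level of $\delta\varphi$, work on the covering group $\widetilde\Diff_{-\infty}(\mathbb R)$, rule out the quadratic term $\tfrac{C}{2}\varphi^2$ by its $x^2$-growth at $-\infty$ against the at-most-linear growth of admissible tangent vectors, and identify the residual two-parameter family $D\varphi+E$ with the tangent space to the $\operatorname{A}(\mathbb R)$-orbit, which dies in the quotient. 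What your version buys is the slightly sharper statement that on $\widetilde\Diff_{-\infty}(\mathbb R)$ the kernel of $D\beta$ is \emph{exactly} the vertical space of the projection onto $\Diff_{-\infty}(\mathbb R)$, making the well-definedness and injectivity on the quotient two sides of one computation; the paper's version is shorter because restricting to the normalized representatives ($a=1$, $b=0$, $\delta f\to 0$ at $-\infty$) lets the decay conditions eliminate everything at once.
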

\begin{proof}
Let $f=\operatorname{log}(\varphi')\in W^{\infty,1}(\RR)$. The mapping $\beta$ and its differential can be expressed
as
\begin{align*}
\beta:\varphi\to f''-1/2f'^2, \; \;  D\beta:\delta\varphi\to \delta f''-f'\delta f',    
\end{align*}
where $\delta f$ is a variation of $f$. Hence the kernel of $D\beta$ can be read off
\begin{equation}\label{ode1}
 \delta f=C_0\int^x_{-\infty} e^{f(\tilde x)}d \tilde x+C_1, 
\end{equation}
for constants $C_0$ and $C_1$. The equality~\ref{ode1} cannot hold for nonzero constants since both $f$
and $\delta f$ are in $W^{\infty,1}(\RR)$. Thus the kernel $\delta\varphi$ vanishes as $\delta\varphi'$ vanishes.
\end{proof}

\begin{thm}
The real Bers imbedding   \begin{equation}
\beta: \begin{cases}
\left(\Diff_{-\infty}(\mathbb R), \dot W^{1,\infty}\right)  	&\to  \left(W^{\infty,1}(\mathbb R),L^\infty\right)\\ 
\varphi 						&\mapsto S\{\varphi\}
\end{cases}
\end{equation} 
is an injective mapping onto an unbounded non-open subset of $W^{\infty,1}(\mathbb R)$ given by $\operatorname{Im}(\beta)=\{f''-1/2f'^2: \ f\in W^{\infty,1}(\mathbb R)\}$. Here $S$ is the Schwarzian derivative. 
\end{thm}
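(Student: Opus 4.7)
The plan is to reduce everything to the isometry $\Phi_\infty$ of Corollary~\ref{inftyisom} combined with the pointwise identity $S\{\varphi\} = u_\varphi'' - \tfrac{1}{2}(u_\varphi')^2$ from \eqref{schwrzexp}, in which $u_\varphi = \log \varphi' = \Phi_\infty(\varphi)$. Since $\Phi_\infty : \Diff_{-\infty}(\RR) \to W^{\infty,1}(\RR)$ is a bijection, as $\varphi$ ranges over the diffeomorphism group the function $f := u_\varphi$ ranges over all of $W^{\infty,1}(\RR)$; this already yields the image description $\on{Im}(\beta) = \{f'' - \tfrac{1}{2}(f')^2 : f \in W^{\infty,1}(\RR)\}$.

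For injectivity I would set $f_i := \log \varphi_i'$ and $w_i := f_i'$, rewriting $\beta(\varphi_1) = \beta(\varphi_2)$ as the Riccati identity $w_1' - \tfrac{1}{2}w_1^2 = w_2' - \tfrac{1}{2}w_2^2$. Subtracting linearizes it: $h := w_1 - w_2$ satisfies
\[
h' \;=\; \tfrac{1}{2}(w_1 + w_2)\, h, \qquad h(x) \;=\; h(x_0)\exp\!\left(\tfrac{1}{2}\int_{x_0}^x (w_1 + w_2)\, d\tilde x\right).
\]
Since $w_1, w_2 \in W^{\infty,1}(\RR)$ vanish at $-\infty$ (via the Sobolev embedding $W^{1,1}(\RR) \hookrightarrow C_0(\RR)$) and the integral on the right converges, the exponential tends to a strictly positive limit while $h$ tends to $0$ at $-\infty$; this forces $h \equiv 0$. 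Hence $f_1' = f_2'$, and the common boundary condition $f_i(x) \to 0$ at $-\infty$ gives $f_1 = f_2$, so $\varphi_1 = \varphi_2$ via $\Phi_\infty$.

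For the topological statements, unboundedness follows by scaling: for any nonzero $f_0 \in W^{\infty,1}(\RR)$ the family $\sigma_\lambda := \lambda f_0'' - \tfrac{\lambda^2}{2}(f_0')^2$ lies in the image yet its $W^{\infty,1}$-seminorms grow like $\lambda^2$ as $\lambda \to \infty$. For non-openness I would exhibit the obstruction
\[
\int_\RR \left(f'' - \tfrac{1}{2}(f')^2\right) dx \;=\; -\tfrac{1}{2}\|f'\|_{L^2(\RR)}^2 \;\le\; 0,
\]
valid since $f'$ vanishes at $\pm\infty$ and $f' \in W^{\infty,1} \subset L^1 \cap L^\infty \subset L^2$. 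The origin $0 = \beta(\id)$ belongs to $\on{Im}(\beta)$, but arbitrarily small perturbations $\varepsilon \tau$ with any $\tau \in W^{\infty,1}(\RR)$ of strictly positive integral—take $\tau(x) = e^{-x^2}$—violate this inequality and therefore lie outside the image. Hence $0$ is not an interior point, and $\on{Im}(\beta)$ fails to be open.

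The most delicate step I anticipate is injectivity, which hinges on the vanishing of $W^{\infty,1}$-functions at $\pm\infty$ to upgrade a single limit condition at $-\infty$ into $h \equiv 0$. The remaining assertions are essentially routine consequences of the explicit formula for $\beta$ together with the integral obstruction, which is precisely what distinguishes this real Bers imbedding from the classical complex Bers imbedding of universal Teichm\"uller space, whose image is famously open.
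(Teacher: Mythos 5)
Your proposal is correct, and three of its four ingredients coincide with the paper's: the image characterization via $\Phi_\infty$ and \eqref{schwrzexp}, the non-openness via the integral obstruction $\int (f''-\tfrac12 f'^2)\,dx=-\tfrac12\|f'\|_{L^2}^2\le 0$ against small perturbations of $0=\beta(\id)$ with positive integral (the paper uses a bump function where you use a Gaussian), and unboundedness by scaling (the paper rescales both amplitude and argument, $h_n(x)=nh(nx)$, and evaluates at a single point, while you rescale only the amplitude $\lambda f_0$ and use a norm lower bound; both work). Where you genuinely diverge is injectivity. The paper invokes the classical fact that the kernel of the Schwarzian consists exactly of M\"obius transformations, so $S\{\varphi_1\}=S\{\varphi_2\}$ forces $\varphi_1\circ\varphi_2^{-1}$ to be M\"obius; it then leaves implicit the (necessary) verification that the only M\"obius map satisfying the decay conditions of $\Diff_{-\infty}(\mathbb R)$ is the identity. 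Your route instead subtracts the two Riccati identities for $w_i=(\log\varphi_i')'$, linearizes to $h'=\tfrac12(w_1+w_2)h$, and uses the $W^{\infty,1}$ decay at $-\infty$ to kill the integrating-factor solution. This is more self-contained and makes the role of the decay conditions explicit, at the cost of not exhibiting the M\"obius structure behind the statement; it is also the global counterpart of the paper's separate lemma that $D\beta$ is injective, where the same ODE-plus-decay mechanism appears infinitesimally.
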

\begin{proof}
First note that a left composition with real affine transformation will not change the Schwarzian, hence $\beta$ is well defined on $\Diff_{-\infty}(\mathbb R)$. The injectivity comes from the fact that the only possible transformation annuls Schwarzian is a M\"{o}bius transformation. The characterization of the image of $\beta$ is a consequence of  corollary~\ref{inftyisom} and formula~\ref{schwrzexp}. To show non-openness, observe that
\begin{align*}
0\geq\int f''(x)dx-1/2\int f'^2(x)dx=\int g(x)dx,    
\end{align*}
since $f\in W^{\infty,1}(\mathbb R)$ implies $\operatorname{lim}_{x\to\pm\infty}f'(x)=0$, and then the first term in the middle vanishes. Consider the neighborhood of $0\in W^{\infty,1}(\mathbb R)$, choose a bump function $\eta\in C_c^\infty(\RR)$ with integration $\int\eta=1$, then for sufficiently small $\epsilon>0$, $g_\epsilon=\epsilon\eta$ is in any neighborhood of $0$ and has integration $\int g_\epsilon=\epsilon>0$, so there is no preimage 
$f_\epsilon$ such that $S\{f_\epsilon\}=g_\epsilon$.

It remains to show the unboundness, which can be seen from the following simple example. Take another smooth bump function $h(x)$ supported in a compact set containing $0$ such that $h''(0)=1$ and $h'(0)=0$.
Then take $h_n(x)=nh(nx)$, we have
$h_n\in W^{\infty,1}(\mathbb R)$ and at $x=0$,
\begin{align*}
|h_n''(0)-1/2h'_n(0)^2|=n^3|h''(0)|\to\infty.    
\end{align*}
\end{proof}
\begin{rem}
We see that the openness and boundedness of the Bers embedding for the Riemann surface case is no longer true for the real line case. It is left to find the exact form of $\operatorname{Im}(\beta)$, which is related to solving a real Riccati equation for $W^{\infty,1}(\mathbb R)$ functions.
\end{rem}

\subsection{A Fr\'echet submanifold of $\operatorname{Dens}(\mathbb R)$}
We consider a subgroup of 
$\Diff_{-\infty}(\mathbb R)$,
\begin{align*}
\widehat\Diff_{-\infty}(\mathbb R)=\left\{\varphi=\id+f: f'\in W^{\infty,1}(\mathbb R),\; f'>0,\text{ and } \lim_{x\to -\infty} f(x)=0 \right\}.    
\end{align*}
Note that $\widehat\Diff_{-\infty}(\mathbb R)$ is indeed a subgroup of $\Diff_{-\infty}(\mathbb R)$, since $(\varphi\circ\phi)'=\varphi'\circ\phi\cdot\phi'$ and $\varphi'>1,\phi'>1$ lead to $(\varphi\circ\phi)'>1$. It is obvious that $\widehat\Diff_{-\infty}(\mathbb R)$ is an open subset of $\Diff_{-\infty}(\mathbb R)$, and consequently it inherits the  Fr\'echet Lie group structure.  

There exists a one-to-one correspondence between $\widehat\Diff_{-\infty}(\mathbb R)$ and the space of all positive densities with density functions in $W^{\infty,1}(\mathbb R)$, denoted by
$\operatorname{Dens}^{\infty,1}(\mathbb R)$,
\begin{align*}
\operatorname{Dens}^{\infty,1}(\mathbb R)=\left\{\mu=g(x)dx: g\in W^{\infty,1}(\mathbb R),\; g>0\right\}.    
\end{align*}
This correspondence can be elevated to an isometry if we further endow $\operatorname{Dens}^{\infty,1}(\mathbb R)$ a $L^p$-metric $F_p$.
\begin{defi}
For $p\geq 1$, the $L^p$-metric $F_p$ on $\operatorname{Dens}^{\infty,1}(\mathbb R)$ is given by
\begin{equation}\label{FishermetricR}
F_p({\mu},a):=\left(\int|\frac{a}{\mu+dx}|^p(\mu+dx)\right)^{1/p},    
\end{equation}
 where $a\in T_\mu\operatorname{Dens}(M)$.
 The $L^p$-metric $F_\infty$ on $\operatorname{Dens}^{\infty,1}(\mathbb R)$ is given by
\begin{equation}\label{FishermetricRinfty}
F_\infty({\mu},a):=\operatorname{sup}_{x\in\RR}|\frac{a}{\mu+dx}|.    
\end{equation}
\end{defi}
\begin{rem}
We introduce the non-right invariant Finslerian metric $F_p$ because the standard $L^p$-Fisher-Rao metric easily fails in this noncompact case. For example, if we take the point $\mu=e^{-x^4}dx$ and the tangent $a=e^{-x^2}dx$, then the integrand $|a/\mu|^2\mu=e^{x^4-2x^2}$ does not converge in $\mathbb R$.    
\end{rem}

\begin{prop}\label{diffdensiso}
the mapping
\begin{equation}
\Gamma: \begin{cases}
\left(\widehat\Diff_{-\infty}(\mathbb R), \dot W^{1,p}\right)  	&\to  \left(\operatorname{Dens}^{\infty,1}(\mathbb R), F_p\right)\\ 
\varphi 						&\mapsto (\varphi-\operatorname{id})^*dx
\end{cases}
\end{equation}
is an isometric diffeomorphism. Therefore, $\operatorname{Dens}^{\infty,1}(\mathbb R)$ is a Fr\'echet manifold endowed with the metric $F_p$.  
\end{prop}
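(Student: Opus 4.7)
The plan is to give an explicit formula for $\Gamma$ and its inverse so that the diffeomorphism statement reduces to an antiderivative calculation, and then match the two Finsler norms via a single change of variables.

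Writing $\varphi = \id + f \in \widehat\Diff_{-\infty}(\RR)$, one computes directly
$\Gamma(\varphi) = (\varphi - \id)^* dx = f'(x)\, dx = (\varphi'(x) - 1)\, dx$,
so the density function $\varphi'-1$ lies in $W^{\infty,1}(\RR)$ and is positive; hence $\Gamma(\varphi) \in \operatorname{Dens}^{\infty,1}(\RR)$. Conversely, for $g\, dx \in \operatorname{Dens}^{\infty,1}(\RR)$, define
$\Gamma^{-1}(g\, dx)(x) = x + \int_{-\infty}^x g(\tilde x)\, d\tilde x$.
Integrability $g \in W^{\infty,1}(\RR) \subset L^1(\RR)$ ensures convergence and the decay condition $\lim_{x\to-\infty}(\Gamma^{-1}(g\,dx)-\id)=0$, while $(\Gamma^{-1})' = 1 + g > 1$ lands the result in $\widehat\Diff_{-\infty}(\RR)$. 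The two composites give the identity by the fundamental theorem of calculus, and smoothness of both $\Gamma$ and $\Gamma^{-1}$ in the Fr\'echet topology follows since each is a continuous linear map modulo an affine translation.

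For the isometry, the differential of $\Gamma$ at $\varphi$ in the direction $v \in T_\varphi\widehat\Diff_{-\infty}(\RR) \cong \mathfrak g_{-\infty}$ is $D_\varphi\Gamma(v) = v'\, dx$. Noting $\Gamma(\varphi) + dx = \varphi'\, dx$, one computes
\begin{align*}
F_p\bigl(\Gamma(\varphi), D_\varphi\Gamma(v)\bigr)^p = \int_\RR \left|\frac{v'}{\varphi'}\right|^p \varphi'\, dx = \int_\RR |v'|^p\, \varphi'^{\,1-p}\, dx.
\end{align*}
On the other hand, the right-invariant $\dot W^{1,p}$-norm at $\varphi$ evaluated on $v$ is $\|(v\circ\varphi^{-1})'\|_{L^p}^p$; expanding by the chain rule, $(v\circ\varphi^{-1})'(x) = v'(\varphi^{-1}(x))/\varphi'(\varphi^{-1}(x))$, and substituting $y = \varphi^{-1}(x)$ with $dx = \varphi'(y)\, dy$ yields exactly $\int_\RR |v'(y)|^p \varphi'(y)^{1-p}\, dy$, up to the conventional constant. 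This matching settles the pointwise Finsler isometry.

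The only nontrivial issue is the smoothness of $\Gamma$ and $\Gamma^{-1}$ as Fr\'echet maps, but once one recognizes $\widehat\Diff_{-\infty}(\RR)$ as an open subset of the affine space $\id + \{f : f' \in W^{\infty,1}(\RR),\; \lim_{x\to-\infty} f(x) = 0\}$, both maps become translations of continuous linear operators, making smoothness immediate. The heart of the proposition is the isometry equality, which is handled by the single substitution $y = \varphi^{-1}(x)$ performed above.
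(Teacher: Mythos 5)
Your proposal is correct and follows essentially the same route as the paper: compute $\Gamma(\varphi)=f'\,dx=(\varphi'-1)\,dx$, exhibit the inverse via the antiderivative $x\mapsto x+\int_{-\infty}^x g$, and verify $\|D_\varphi\Gamma(v)\|_{F_p}^p=\int_{\RR}|v'|^p\varphi'^{\,1-p}\,dx=\|v\|_{\dot W^{1,p}}^p$. The only difference is that you spell out the substitution $y=\varphi^{-1}(x)$ identifying this with the right-invariant norm, and you make the injectivity/decay argument explicit, both of which the paper leaves implicit.
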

\begin{proof}
 First we have $(\varphi-\operatorname{id})^*dx=f'(x)dx$. The statement that $\Gamma$ is a diffeomorphism is clear except for the injectivity, which is guaranteed by the left decay condition of $f$.  It remains to show the isometry, which can be calculated as follows
 \begin{align*}
\|D_{\varphi,h}\Gamma\|^p_{F_p}= 
\int_{\mathbb R} {\varphi_x}^{1-p} |h_x|^p dx=||h||_{\dot W^{1,p}}.
\end{align*}
Finally, $\operatorname{Dens}^{\infty,1}(\mathbb R)$ is a Fr\'echet manifold since $\widehat\Diff_{-\infty}(\mathbb R)$ is a Fr\'echet Lie group.
\end{proof}
One of the important Fr\'echet submanifolds of $\operatorname{Dens}^{\infty,1}(\mathbb R)$ is the subspace of all probability densities 
\begin{align*}
\operatorname{Prob}^{\infty,1}(\mathbb R)=\left\{\mu=g(x)dx: g\in W^{\infty,1}(\mathbb R),\; g>0 ,\;\int\mu=1\right\}.    
\end{align*}

Parallel to the case of compact manifolds, we have an isometry from 
$\operatorname{Dens}^{\infty,1}(\mathbb R)$ with the $F_p$-metric to $W^{\infty,1}(\mathbb R)$
with the $L^p$-metric.

\begin{thm}\label{lpfisherisometryR}
The mapping 
\begin{align*}
\Psi_p: \begin{cases}
\left(\operatorname{Dens}^{\infty,1}(\mathbb R), F_p\right)  &\to  \left(W^{\infty,1}(\mathbb R),L^p\right)\\ \mu &\mapsto p\left((\frac{\mu+dx}{dx})^{1/p}-1\right)
\end{cases}
\end{align*}
is an isometric embedding.
Furthermore, the image $\mathcal U=\Psi_p(\operatorname{Dens}(M))$  is 
the set of all positive functions in $C^{\infty}(M)$, i.e.,
\begin{align*}
\mathcal U=\{f\in W^{\infty,1}(\mathbb R):\ f>-p\}.
\end{align*}
In particular, the restriction map $\Psi_p|_{\operatorname{Prob}(M)}$ is also an isometric embedding, and the image $\mathcal U'=\Psi_p(\operatorname{Prob}(M))$  is 
the set of all positive functions in a hypersurface of $W^{\infty,1}(\mathbb R)$, i.e.,
\begin{align*}
\mathcal U'=\{f\in W^{\infty,1}(\mathbb R):\ f>0,\ \int\left((\frac{f(x)}{p}+1)^p-1\right)dx=1\}.
\end{align*}
\end{thm}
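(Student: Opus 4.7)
The plan is to adapt the strategy from Theorem~\ref{lpfisherisometry} (the compact case) and Theorem~\ref{thm:isometry:nonperiodic} (the Bauer et al.\ diffeomorphism isometry) to the present non-compact density setting, with the role played in the compact case by the reference volume $\mu$ now taken over by the shifted density $\mu+dx$ appearing in the definition of $F_p$. The argument splits into three pieces: (i) well-definedness of $\Psi_p$ together with construction of its inverse; (ii) the pointwise isometry identity for tangent vectors; (iii) the restriction to the probability hypersurface.

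For (i), I write $\mu=g\,dx$ with $g\in W^{\infty,1}(\RR)$ and $g>0$, so that $\Psi_p(\mu)=p\bigl((g+1)^{1/p}-1\bigr)$. Since $g+1$ is bounded below by $1$ and above by $1+\|g\|_{L^\infty}$ (the latter coming from the embedding $W^{1,1}(\RR)\hookrightarrow C_0(\RR)$ applied to $g$ and to each of its derivatives), every derivative of $(g+1)^{1/p}-1$ is by Fa\`a di Bruno a polynomial in $(g+1)^{-1}$ and in the derivatives $g',g'',\dots$ Combining the $L^1$ bound on each $g^{(k)}$ with the $L^\infty$ bound on positive and negative powers of $g+1$ then places $\Psi_p(\mu)$ in $W^{\infty,1}(\RR)$. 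The candidate inverse $f\mapsto\bigl((f/p+1)^p-1\bigr)\,dx$ is well defined on $\{f\in W^{\infty,1}(\RR):f>-p\}$ by the same Fa\`a di Bruno argument (here one needs only $f/p+1>0$ to ensure the powers are smooth), and the two maps are visibly mutually inverse, identifying the image of $\Psi_p$ with $\mathcal U$.

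For (ii), a direct differentiation gives, for $a=h\,dx\in T_\mu\operatorname{Dens}^{\infty,1}(\RR)$,
\begin{equation*}
D_\mu\Psi_p(a)=h\,(g+1)^{1/p-1}.
\end{equation*}
Raising to the $p$-th power and integrating yields
\begin{equation*}
\|D_\mu\Psi_p(a)\|_{L^p}^p=\int_{\RR}|h|^p(g+1)^{1-p}\,dx=\int_{\RR}\left|\frac{h}{g+1}\right|^p(g+1)\,dx=F_p(\mu,a)^p,
\end{equation*}
which is exactly the desired isometry. For (iii), the probability constraint $\int g\,dx=1$ transforms under the inverse relation $g=(f/p+1)^p-1$ directly into $\int\bigl((f/p+1)^p-1\bigr)\,dx=1$, so $\Psi_p$ carries $\operatorname{Prob}^{\infty,1}(\RR)$ bijectively onto $\mathcal U'$, and the restricted map is automatically an isometry with the induced Finsler metric. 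The step requiring the most technical care is the Fa\`a di Bruno verification in (i): that is where the structure of $W^{\infty,1}=\bigcap_k W^{k,1}$ really has to be used, via the interplay between $L^1$ bounds on derivatives and $L^\infty$ bounds on pointwise values, to close the argument that both $\Psi_p$ and its inverse preserve the regularity class.
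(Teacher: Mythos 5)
Your proposal is correct and follows essentially the same route the paper intends, which simply combines the derivative computation of Theorem~\ref{lpfisherisometry} with the image/inverse analysis of Theorem~\ref{thm:isometry:nonperiodic}; your Fa\`a di Bruno verification that $\Psi_p$ and its inverse preserve the class $W^{\infty,1}$ supplies detail the paper leaves entirely implicit (for the zeroth derivative one also needs the concavity bound $(1+g)^{1/p}-1\le g/p$ to get integrability). One small point you inherit from the statement itself: since $\operatorname{Dens}^{\infty,1}(\mathbb R)$ requires $g>0$, the image is actually $\{f>0\}$ rather than the displayed $\{f>-p\}$ (the theorem's own prose ``all positive functions'' agrees with $f>0$), and your candidate inverse lands in positive densities only on that smaller set.
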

\begin{proof}
 The proof is a combination of Theorem~\ref{lpfisherisometry} and Theorem~\ref{thm:isometry:nonperiodic}, so we omit it.
 \end{proof}
For $p\to\infty$, we have the following corollary,
\begin{cor}\label{lpfisherisometryR}
The mapping 
\begin{align*}
\Psi_\infty: \begin{cases}
\left(\operatorname{Dens}^{\infty,1}(\mathbb R), F_\infty\right)  &\to  \left(W^{\infty,1}(\mathbb R),L^\infty\right)\\ \mu &\mapsto \operatorname{log}(1+\frac{\mu}{dx})
\end{cases}
\end{align*}
is an isometric diffeomorphism.
In particular, the restriction map $\Psi_\infty|_{\operatorname{Prob}(M)}$ is also an isometric embedding, and the image $\mathcal U'=\Psi_\infty(\operatorname{Prob}(M))$  is 
the set of all positive functions in a  hypersurface of $W^{\infty,1}(\mathbb R)$, i.e.,
\begin{align*}
\mathcal U'=\{f\in W^{\infty,1}(\mathbb R):\ f>0,\ \int\left(e^{f(x)}-1\right)dx=1\}.
\end{align*}
\end{cor}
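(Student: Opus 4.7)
The plan is to parallel the proof of the preceding theorem (the finite-$p$ analogue) and of Corollary~\ref{inftyisom}, exploiting that $\Psi_\infty$ is the pointwise limit of $\Psi_p$ as $p\to\infty$, by a monotone convergence argument exactly as in Corollary~\ref{inftyisom}. Concretely, I would verify four items in order: (i) the map is well defined, i.e., $\log(1+g)\in W^{\infty,1}(\RR)$ whenever $g\in W^{\infty,1}(\RR)$ with $g>0$; (ii) the candidate inverse $f\mapsto (e^f-1)\,dx$ also lands in $\operatorname{Dens}^{\infty,1}(\RR)$, yielding a bijection onto $\{f\in W^{\infty,1}(\RR):f>0\}$; (iii) the differential $D_\mu\Psi_\infty$ pulls the $L^\infty$-norm back to $F_\infty$; and (iv) the probability hypersurface admits the stated description.

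For step (i), the function itself satisfies $0<\log(1+g)\le g\in L^1$, so only the derivatives require care. Since in dimension one $W^{1,1}(\RR)\hookrightarrow C_0(\RR)$, every derivative $g^{(k)}$ is simultaneously bounded and integrable, while $g>0$ forces $(1+g)^{-m}\le 1$. Iterating the chain and product rules expresses $(\log(1+g))^{(k)}$ as a finite sum of terms of the form $(1+g)^{-m}\prod_j g^{(k_j)}$, each of which is a product of one $L^1$-factor with bounded factors, hence lies in $L^1$. Step (ii) is dual: $|e^f-1|\le |f|\,e^{\|f\|_{L^\infty}}$ handles integrability of the base function, and higher derivatives of $e^f-1$ expand as $e^f\cdot P(f',\dots,f^{(k)})$, which are in $L^1$ by the same reasoning. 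The elementary equivalence $g>0\iff \log(1+g)>0$ then pins down the image.

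For step (iii), differentiating at $\mu=g\,dx$ in direction $a$ gives
\begin{equation*}
D_\mu\Psi_\infty(a)=\frac{a/dx}{1+g}=\frac{a}{\mu+dx},
\end{equation*}
so that $\|D_\mu\Psi_\infty(a)\|_{L^\infty}=\sup_{x\in\RR}|a/(\mu+dx)|=F_\infty(\mu,a)$ directly from formula~\ref{FishermetricRinfty}. For step (iv), the probability constraint $\int_\RR g\,dx=1$ translates under $g=e^f-1$ into $\int_\RR(e^{f(x)}-1)\,dx=1$, and positivity of $g$ is equivalent to positivity of $f$, yielding exactly the claimed hypersurface. The main technical obstacle is step (i) together with its dual in (ii): one has to make sure that the polynomial expressions in the $g^{(k)}$ (respectively $f^{(k)}$) produced by iterated differentiation remain integrable to every order, which is the only place where it matters that $W^{\infty,1}$ controls all derivatives in both $L^1$ and $L^\infty$. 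Everything else follows mechanically from the finite-$p$ arguments already developed.
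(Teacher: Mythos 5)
Your proof is correct and follows essentially the same route as the paper, which simply declares the result to be a ``combination of Theorem~\ref{lpfisherisometry} and Theorem~\ref{inftyisom}''; you have merely written out in full the well-definedness, inverse, differential, and image computations that the paper leaves implicit. The only point worth adding is that your (correct) identification of the image as $\{f\in W^{\infty,1}(\mathbb R):\ f>0\}$ shows $\Psi_\infty$ is a diffeomorphism onto this open subset rather than onto all of $W^{\infty,1}(\mathbb R)$, a precision the paper's statement glosses over.
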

\begin{proof}
 Here the proof is a combination of Theorem~\ref{lpfisherisometry} and Theorem~\ref{inftyisom}.
 \end{proof}
Putting all of the above together, we have the following commutative diagram of isometric embeddings or diffeomorphisms for any $1\leq p \leq\infty$.

\begin{tikzcd}
 \left(\widehat\Diff_{-\infty}(\mathbb R), \dot W^{1,p}\right) \arrow{d}{\Gamma} \arrow{r}{\Phi_p}
 & \left(W^{\infty,1}(\mathbb R),L^p\right) \arrow{d}{\operatorname{id}} \\
 \left(\operatorname{Dens}^{\infty,1}(\mathbb R), F_p\right) \arrow{r}{\Psi_p}
 & \left(W^{\infty,1}(\mathbb R),L^p\right).
 \end{tikzcd}   
 
This diagram is essential as it gives us the correspondence between the space of diffeomorphisms, densities and the space of functions.
 
\begin{rem}[The higher dimensional case]
In the paper~\cite{michor2013zoo}, Michor and Mumford have shown that $\operatorname{Diff}_{H^\infty}(\mathbb\RR^n)$ is a Fr\'echet Lie group, and their method can be used to treat the case 
$\operatorname{Diff}_{W^{\infty,1}}(\mathbb\RR^n)$. But now the situation has become much more complicated, since there is no Newton-Leibniz theorem to retrieve the original function from its derivative. Moreover, one cannot find any direct condition on $\operatorname{det}(\mathbb I_n+df)$ that is equivalent to $\operatorname{det}(df)>0$, so the isometry in Proposition~\ref{diffdensiso} fails.
\end{rem}

\subsection{A remark on hyperbolic structures}
The universal Teichmuller space $\mathcal T(1)$ is the quotient of the set $\operatorname{QC}_0(\mathbb H)$ of quasiconformal mappings
of the upper half plane $\mathbb H$ fixing $0,1$ and $\infty$  by the group $\operatorname{PSL}(2,\RR)$ of Mobi\"{u}s transformations, i.e., 
\begin{align*}
\mathcal T(1)=\operatorname{QC}_0(\mathbb H) /\operatorname{PSL}(2,\RR).  
\end{align*}
It is well-known that the subgroup $\operatorname{PSL}(2,\RR)$ can be given a hyperbolic metric rendering it isometric to the unit tangent bundle $T^1\mathbb H$ endowed with the Sasaki metric.
In preceding discussions, we have observed that the Fr\'echet Lie group $\operatorname{Diff}_{-\infty}(\RR)$ is similar to $\mathcal T(1)$ in many aspects. Using this analogy, it is plain to imagine that the covering space $\widetilde\Diff_{-\infty}(\mathbb R)$ is rich in hyperbolic structures. In fact, a regular symmetric location-scale family of  probability densities   forms an upper half plane $\mathbb H$ with standard hyperbolic metric induced by the Fisher information metric.

Before stating this result, we first clarify the relation between this family and our Fr\'echet Lie groups. We select an open subgroup $G(\RR)$ of $\widetilde\Diff_{-\infty}(\mathbb R)$,
\begin{align*}
G(\mathbb R):=\left\{\varphi=a\cdot\id+b+f: (\varphi-b)/a\in\widehat\Diff_{-\infty}(\mathbb R), a>0,\ b\in\RR \right\}.
\end{align*}
Clearly we have a quotient relation similar to formula~\ref{quotentgroup}
\begin{equation}
 \widehat\Diff_{-\infty}(\mathbb R)=G(\mathbb R)/A(\RR),   
\end{equation}
where $A(\RR)$ is the affine group acting by right cosets. The isometric diffeomorphism $\Gamma$ defined in Proposition~\ref{diffdensiso} identifies the Lie group $\widehat\Diff_{-\infty}(\mathbb R)$ with 
the space  $\operatorname{Dens}^{\infty,1}(\mathbb R)$, consequently $G(\RR)$ can be decomposed into two parts, one is the contractible space $\operatorname{Dens}^{\infty,1}(\mathbb R)$, the other is the orientation preserving affine group $A(\RR)$. There is a canonical hyperbolic metric on $A(\RR)$, since it is naturally an upper half plane. In particular, for the regular symmetric  location-scale family in $\operatorname{Prob}^{\infty,1}(\mathbb R)$, the Fisher information metric induces a hyperbolic metric.
\begin{thm}[Amari~\cite{Amari2000}]
For regular symmetric  location-scale family of probability densities, i.e., all smooth probability density functions $g(-x)=g(x)$ such that the Fisher information metric exists, the Fisher information metric on the location-scale family $g(x;t,\sigma)$  is a  hyperbolic metric on $\mathbb H$ spanned by $\sigma$ and $t$. Here the family of probability densities $g(x;t,\sigma)$ is given by
\begin{align*}
 g(x;t,\sigma)=\frac{1}{\sigma}g(\frac{x-t}{\sigma}),   
\end{align*}
for $\sigma>0$ and $t\in\RR$.
\end{thm}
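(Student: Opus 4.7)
My strategy is direct computation of the Fisher information matrix in the coordinates $(t,\sigma)$, followed by a linear rescaling of $\sigma$ that puts the result into the standard Poincar\'e form up to a positive constant. Setting $y:=(x-t)/\sigma$ and $h(y):=g'(y)/g(y)$, I first compute
\[
 \log g(x;t,\sigma)=-\log\sigma+\log g(y),\qquad \partial_t\log g=-\frac{h(y)}{\sigma},\qquad \partial_\sigma\log g=-\frac{1+yh(y)}{\sigma}.
\]
The change of variables $x=\sigma y+t$ turns $g(x;t,\sigma)\,dx$ into $g(y)\,dy$ and reduces the three Fisher integrals to
\[
 g_{tt}=\frac{A}{\sigma^{2}},\qquad g_{\sigma\sigma}=\frac{1}{\sigma^{2}}\int(1+yh)^{2}g\,dy,\qquad g_{t\sigma}=\frac{1}{\sigma^{2}}\int h(1+yh)g\,dy,
\]
where $A:=\int h^{2}g\,dy$ depends only on the profile $g$, not on $(t,\sigma)$.

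The symmetry hypothesis enters in the next step: since $g$ is even, $h$ is odd, so both $hg$ and $yh^{2}g$ are odd and the cross term collapses to $g_{t\sigma}=0$. Expanding $(1+yh)^{2}=1+2yh+(yh)^{2}$ and rewriting $yhg=yg'$, one integration by parts against the regularity assumption $yg(y)\to 0$ at $\pm\infty$ (which is implicit in "the Fisher information metric exists") yields $\int 2yhg\,dy=-2$, so
\[
 g_{\sigma\sigma}=\frac{B-1}{\sigma^{2}},\qquad B:=\int y^{2}h^{2}g\,dy.
\]

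The final step is to check positivity and identify the hyperbolic form. Clearly $A>0$ unless $g$ is constant, which is excluded. For $B-1>0$ I apply Cauchy--Schwarz to the pair $(yh,1)$ against the probability measure $g\,dy$ and reuse the identity $\int yhg\,dy=-1$ from above:
\[
 1=\Bigl(\int yh\cdot g\,dy\Bigr)^{\!2}\;\le\;\int(yh)^{2}g\,dy\cdot\int g\,dy\;=\;B,
\]
with equality forcing $yh(y)\equiv -1$, i.e.\ $g(y)=C/|y|$, contradicting integrability. The rescaling $\tilde\sigma:=\sqrt{(B-1)/A}\,\sigma$ then converts the metric into
\[
 \frac{A\,dt^{2}+(B-1)d\sigma^{2}}{\sigma^{2}}\;=\;(B-1)\,\frac{dt^{2}+d\tilde\sigma^{2}}{\tilde\sigma^{2}},
\]
the Poincar\'e hyperbolic metric on $\mathbb H$ up to a positive scalar. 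The main obstacle is precisely the vanishing of the off-diagonal entry $g_{t\sigma}$; the even-symmetry assumption on $g$ is exactly what is needed there. Without it the Fisher metric is still left-invariant under the affine action $(t,\sigma)\mapsto(t_{0}+\sigma_{0}t,\sigma_{0}\sigma)$ on $\mathbb H$, hence still of constant Gauss curvature, but the anisotropic cross term obscures the standard form and the rescaling argument above must be replaced by a general left-invariant frame diagonalization.
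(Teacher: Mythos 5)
Your computation is correct and is precisely the ``direct calculation'' that the paper declares well-known and omits: the change of variables $y=(x-t)/\sigma$ reduces the Fisher integrals to $(t,\sigma)$-independent constants over $\sigma^2$, the evenness of $g$ kills the cross term, and the Cauchy--Schwarz argument with $\int y h g\,dy=-1$ correctly establishes $B>1$ so that the rescaling yields a constant positive multiple of the Poincar\'e metric. Nothing further is needed; the only point worth flagging is that the boundary condition $yg(y)\to0$ used in the integration by parts is part of what ``regular'' must mean in the hypothesis, as you note.
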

\begin{proof}
The result is well-known and the proof is a direct calculation, so we omit the proof.   
\end{proof}
%We can generalize this structure for any probability densities $g\in\operatorname{Prob}^{\infty,1}(\mathbb R)$. Let $z=t+\sigma i\in\mathbb H$. Consider the space $\mathcal L$ of all location-scale families  $g(x;z)$ for $z\in\mathbb H$ and its tangent $h(x;z)$ and $k(x;z)$ for $h,k\in T_g\operatorname{Prob}^{\infty,1}(\mathbb R)$. 

\section{ $L^p$-geometry on the space of symplectic forms}\label{symplecticsec}
Let $M$ be a closed smooth manifold of dimension $2n$,  with a potential symplectic form. A symplectic form on $M$ is a closed non-degenerate differential $2$-form. Now we consider the set of all possible symplectic forms $\omega$ on $M$. 

\subsection{The principal bundle structure on the space of symplectic forms} 
Parallel to the case of space of positive densities $\operatorname{Dens}(M)$ and space of probability densities $\operatorname{Prob}(M)$, there are two different spaces of symplectic forms to study on a closed manifold $M$.  
\begin{defi}
Let $\tilde S^+_M$ be the set of all possible symplectic forms imposed on the manifold $M$ with positive volume form. Denote by $S_M$ its subset of all symplectic forms 
$\omega$
with total volume $\int\omega^n/n!=1$.   
\end{defi}
$\tilde S^+_M$ can be endowed with a fibre bundle structure by introducing the canonical map from $\tilde S^+_M$ to 
$\operatorname{Dens}(M)$.
\begin{lem}
The surjective submersion $\pi:\tilde S^+_M\to \operatorname{Dens}(M)$ via:  $\omega\to\omega^n$ restricts to a submersion of $S_M$ onto its image $\operatorname{Prob}(M)$. 
The kernel of $\pi$ at each positive density $\mu$ is isomorphic.
\end{lem}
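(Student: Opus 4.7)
The plan is to verify, in turn, that $\pi$ is a well-defined surjection, that its differential is pointwise surjective, and that its fibres are mutually diffeomorphic, with the same chain of arguments covering the restriction to $S_M\to\operatorname{Prob}(M)$; the common tool throughout is Moser's theorem, which already played the same role for $\operatorname{Dens}(M)$ earlier in the paper. For well-definedness, if $\omega\in\tilde S^+_M$ then $\omega^n$ is everywhere non-vanishing by non-degeneracy, with sign pinned down by the positivity condition built into $\tilde S^+_M$, so $\omega^n\in\operatorname{Dens}(M)$. For surjectivity, I would fix a reference symplectic form $\omega_0$, rescale it to $c\omega_0$ with $c=(\int_M\mu/\int_M\omega_0^n)^{1/n}$ to match total volumes, and then invoke Moser's theorem to produce $\varphi\in\operatorname{Diff}(M)$ with $\varphi^*(c\omega_0)^n=\mu$; the restriction $\pi|_{S_M}$ then needs no rescaling, since both sides have total volume one.

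Since $\tilde S^+_M$ is open inside the Fr\'echet space of closed two-forms $Z^2(M)$, the tangent space $T_\omega\tilde S^+_M$ is $Z^2(M)$, and I compute $d\pi_\omega(\eta)=n\omega^{n-1}\wedge\eta$. To show this is surjective onto $\Omega^{2n}(M)=T_{\omega^n}\operatorname{Dens}(M)$, I decompose an arbitrary $\alpha$ as $\alpha=\alpha_0+c\,\omega^n$, with $c=\int_M\alpha/\int_M\omega^n$ chosen so that $\alpha_0$ has zero integral, hence is exact, say $\alpha_0=d\beta$. The scalar piece $c\,\omega^n$ is hit by $\eta=(c/n)\omega$, which is closed. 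For the exact piece I would invoke Moser: since $\omega^n$ is nowhere vanishing, the bundle map $X\mapsto\iota_X\omega^n$ is a fibrewise isomorphism $TM\to\Lambda^{2n-1}T^*M$, so there is a vector field $X$ with $\iota_X\omega^n=\beta$, and then $\eta:=\mathcal L_X\omega=d\iota_X\omega$ is a closed (in fact exact) two-form satisfying
\begin{equation*}
n\omega^{n-1}\wedge\eta \;=\; n\omega^{n-1}\wedge\mathcal L_X\omega \;=\; \mathcal L_X\omega^n \;=\; d\iota_X\omega^n \;=\; d\beta \;=\; \alpha_0,
\end{equation*}
where the first equality uses the derivation identity $\mathcal L_X\omega^n=n\omega^{n-1}\wedge\mathcal L_X\omega$. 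When $\omega\in S_M$ and $\alpha\in T_\mu\operatorname{Prob}(M)$ one automatically has $c=0$, so the resulting $\eta$ lies in $T_\omega S_M=\{\eta\in Z^2(M):\int_M\omega^{n-1}\wedge\eta=0\}$ and yields the submersion property of $\pi|_{S_M}$.

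For the fibre statement, Moser's theorem applies once more: given $\mu_0,\mu_1$ in the base of the same total volume, a diffeomorphism $\varphi$ with $\varphi^*\mu_1=\mu_0$ induces $\omega\mapsto\varphi^*\omega$ as a Fr\'echet-smooth bijection $\pi^{-1}(\mu_1)\to\pi^{-1}(\mu_0)$ with smooth inverse $(\varphi^{-1})^*$, so all fibres over $\operatorname{Prob}(M)$, and (after a scaling $\omega\mapsto s\omega$) also those over $\operatorname{Dens}(M)$, are mutually diffeomorphic. The main obstacle I anticipate is in the submersion step: the naive candidate $\eta=f\omega$ with $f=\alpha/\omega^n$ gives the right algebra but fails to be closed, since $d(f\omega)=df\wedge\omega$ vanishes only when $f$ is locally constant (as $\omega\wedge\cdot:\Omega^1\to\Omega^3$ is injective for $n\geq 2$); this is exactly why one must peel off the cohomology class with the $c\,\omega^n$ term and handle the remaining exact part by the Moser construction above.
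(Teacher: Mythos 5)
Your proposal is correct, and for the surjectivity and fibre statements it follows essentially the same route as the paper: rescale a reference symplectic form to match total volume, apply Moser's theorem to get $\varphi^*(c\omega_0)^n=\mu$, and transport fibres by pullback along a volume-matching diffeomorphism. Where you genuinely diverge is the submersion step. The paper disposes of it in one line by lifting paths: any path $\mu(t)$ in $\operatorname{Dens}(M)$ (resp.\ $\operatorname{Prob}(M)$) is realized as $k(t)\varphi(t)^*\omega_0^n$ (resp.\ $\varphi(t)^*\omega_0^n$), so tangent vectors downstairs are images of tangent vectors upstairs --- which is itself another invocation of Moser. You instead verify surjectivity of $d\pi_\omega(\eta)=n\,\omega^{n-1}\wedge\eta$ directly at the linear level: split $\alpha=c\,\omega^n+d\beta$, hit the scalar part with $(c/n)\omega$, and hit the exact part with $\eta=\mathcal L_X\omega=d\iota_X\omega$ where $\iota_X\omega^n=\beta$, using $n\,\omega^{n-1}\wedge\mathcal L_X\omega=\mathcal L_X\omega^n=d\iota_X\omega^n$. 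This costs a little more computation but buys an explicit closed (indeed exact) preimage of every tangent vector, makes visible why the preimage lands in $T_\omega S_M$ when $\int_M\alpha=0$, and correctly diagnoses why the naive candidate $f\omega$ fails; the paper's path-lifting argument is shorter but leaves the infinitesimal surjectivity implicit in the Moser machinery. Both are valid; note only that your appeal to ``zero integral implies exact'' for the top-degree piece tacitly uses connectedness of $M$, the same standing assumption the paper's own use of Moser's trick requires.
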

\begin{proof}
It suffices to show that for any positive density $\mu$, there exists a symplectic form $\omega$
such that $\omega^n=\mu$.
With the assumption on $M$, we have a symplectic form $\omega_0$ on $M$. Hence a positive multiple $k$ makes the following equality hold, $\int \mu=k^n\int \omega_0^n$. Using Moser's trick, we have $\varphi^*(k\omega_0)^n=\mu$ for some orientation-preserving diffeomorphism $\varphi$. The $2$-form $\varphi^*(k\omega_0)$ is obviously nondegenerate and closed, so it is the desired symplectic form. In addition, any smooth path emanating from $\mu$ in $\operatorname{Dens}(M)$ (resp. $\operatorname{Prob}(M)$) is given by $\mu(t)=k(t)\varphi(t)^*\omega_0^n$ (resp.  $\mu(t)=\varphi(t)^*\omega_0^n$), which shows submersion of the maps.

For the second part, note that for each fibre $\mathcal F_\mu=\{ \omega\in S_M\ : \ \mu=\omega^n\}$, we have $\mathcal F_\mu\cong \mathcal F_{\varphi^*\mu}$. Then every fibre is isomorphic to one another by using the Moser's trick once again. The case of $\tilde S^+_M$ differs only by rescaling by a positive number. 
\end{proof}
The diffeomorphism group  $\operatorname{Diff}_\mu(M)$ forms a closed subgroup of $\operatorname{Diff}(M)$ with the quotient $\operatorname{Diff}(M)/\operatorname{Diff}_\mu(M)\cong\operatorname{Prob}(M)$. More precisely, the projection onto the quotient space $\operatorname{Diff}(M)/\operatorname{Diff}_\mu(M)$ defines a smooth ILH
principal bundle with fibre $G=\operatorname{Diff}_\mu(M)$;  see e.g.,~\cite{khesin2011}. We can define its associated fibre bundle $E$ with fibre $\mathcal F\cong \mathcal F_\mu$ by
\begin{align*}
E:=\operatorname{Diff}(M)\times_{G} \mathcal F=(\operatorname{Diff}(M)\times \mathcal F)/G,    
\end{align*}
where the group action $G$ acts on $\operatorname{Diff}(M)\times \mathcal F$ by 
\begin{align*}
g\cdot(\varphi,\omega):=(g\circ\varphi,(g^{-1})^*\omega).
\end{align*}
It turns out that the ILH associated bundle $E$ is exactly the space of symplectic forms with constant volume $S_M$. 
\begin{thm}
The smooth surjective map $\pi: S_M\to\operatorname{Prob}(M)$ is the projection map of an associated bundle for the ILH principal bundle $p:\operatorname{Diff}(M)\to\operatorname{Diff}(M)/G$.     
\end{thm}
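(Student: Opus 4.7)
The plan is to build an explicit bundle isomorphism between the associated bundle $E = \operatorname{Diff}(M)\times_G \mathcal F$ and $S_M$ over $\operatorname{Prob}(M)$. Fix a base density $\mu_0\in\operatorname{Prob}(M)$ and take $\mathcal F=\mathcal F_{\mu_0}=\pi^{-1}(\mu_0)$ as the typical fibre. I would define the map
\[
\Theta:\operatorname{Diff}(M)\times\mathcal F\longrightarrow S_M,\qquad (\varphi,\omega)\longmapsto \varphi^*\omega,
\]
and observe that, because $g\in G$ preserves $\mu_0$ and pullback reverses order,
\[
(g\circ\varphi)^*(g^{-1})^*\omega=\varphi^*g^*(g^{-1})^*\omega=\varphi^*\omega,
\]
so $\Theta$ is invariant under the $G$-action $g\cdot(\varphi,\omega)=(g\circ\varphi,(g^{-1})^*\omega)$ and descends to $\bar\Theta:E\to S_M$. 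Commutativity with the projections is also immediate: $(\varphi^*\omega)^n/n!=\varphi^*(\omega^n/n!)=\varphi^*\mu_0=p(\varphi)$, so the image in $\operatorname{Prob}(M)$ coincides with the class of $\varphi$ under $p$.

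Next I would verify bijectivity of $\bar\Theta$. For surjectivity, given $\eta\in S_M$ with volume $\mu=\eta^n/n!$, Moser's trick produces $\varphi\in\operatorname{Diff}(M)$ with $\varphi^*\mu_0=\mu$; setting $\omega=(\varphi^{-1})^*\eta$ gives $\omega^n/n!=(\varphi^{-1})^*\mu=\mu_0$, so $\omega\in\mathcal F$ and $\Theta(\varphi,\omega)=\eta$. For injectivity, if $\varphi^*\omega=\psi^*\omega'$ with $\omega,\omega'\in\mathcal F$, taking $n$th powers gives $\varphi^*\mu_0=\psi^*\mu_0$, so $g:=\psi\circ\varphi^{-1}\in G$ and $\psi=g\circ\varphi$; substituting back yields $\omega'=(g^{-1})^*\omega$, exactly the $G$-orbit relation.

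To upgrade bijectivity to a bundle isomorphism in the ILH category, I would exhibit local trivializations compatibly on both sides. On the principal bundle side, local sections $s_\alpha:U_\alpha\subset\operatorname{Prob}(M)\to\operatorname{Diff}(M)$ exist by Moser's construction applied smoothly in parameters, and they induce trivializations $U_\alpha\times\mathcal F\to\pi^{-1}(U_\alpha)\subset S_M$ by $(\mu,\omega)\mapsto s_\alpha(\mu)^*\omega$; these are smooth in the ILH sense because pullback by a smoothly parametrized diffeomorphism is a smooth operation on forms. On overlaps the transition functions for both bundles coincide with the cocycle $g_{\alpha\beta}=s_\alpha\circ s_\beta^{-1}$ acting on $\mathcal F$ by $(g^{-1})^*$, which identifies $S_M$ with the associated bundle.

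The main obstacle will be the smoothness statement for the Moser-type local sections $s_\alpha$, since we are working with Fréchet manifolds and an ILH principal bundle, so one needs the tame smoothness of Moser's homotopy with respect to the density argument rather than bare existence. Once this ingredient is in place (cf.~\cite{khesin2011, omori1970group}), the rest of the verification is essentially formal naturality of pullback, and the claim that $\pi:S_M\to\operatorname{Prob}(M)$ is the projection of the associated bundle follows.
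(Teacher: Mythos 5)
Your proposal is correct and follows essentially the same route as the paper: both identify $S_M$ with the associated bundle via the pullback map $(\varphi,\omega)\mapsto\varphi^*\omega$, using Moser's trick for surjectivity and the $G$-orbit relation for injectivity (the paper constructs the inverse direction $\omega\mapsto(\varphi,(\varphi^{-1})^*\omega)$ and you construct the forward map, but the content is identical). A minor bonus of your normalization $\varphi^*\mu_0=\mu$ is that the second component genuinely lands in the fixed fibre $\mathcal F_{\mu_0}$, sidestepping the fibre-mismatch the paper has to address in its subsequent remark; your closing caveat about tame smoothness of the Moser sections is also the honest gap in both write-ups.
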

\begin{proof}
It is sufficient to find a homeomorphism $\Psi:S_M\to E$. We define a map from any $\omega\in S_M$ to $(\varphi,(\varphi^{-1})^*\omega)\in E$ by the following construction.
For any $\omega$ in $S_M$ and a fixed volume form $\mu$, $\int\omega^n$ and $\int\mu$ equals, then $\varphi^*\omega^n=\mu$ for some diffeomorphism $\varphi$. This is well-defined, since if $\varphi^*\omega^n=\psi^*\omega^n$, then we have $\varphi=\eta\circ\psi$ for some $\eta\in G$, and the equivalent relation
\begin{align*}
(\varphi,(\varphi^{-1})^*\omega)= (\eta\circ\psi,(\eta^{-1})^*(\psi^{-1})^*\omega)\sim (\psi,(\psi^{-1})^*\omega).   
\end{align*}
The definition of the inverse of $\Psi$ is much more straightforward and clear to be well-defined, which is given by $\Phi:(\phi,\alpha)\to\phi^*\alpha$. It is not hard to check that $\Phi$ is indeed the inverse.
\end{proof}
\begin{rem}
In order to ensure the equivalence relation for the associated bundle $E$, $\Psi$ does not map every fibre to $\mathcal F_\mu$. But rather it maps the fibre $\mathcal F_{(\varphi^{-1})^*\mu}$ of $S_M$ to the fibre $\mathcal F_{(\varphi^{-2})^*\mu}$ of $E$.  
\end{rem}
\begin{cor}
The fibre $\mathcal F$ of the map $\pi$ is an ILH manifold, i.e., a Fr\'echet manifold.    
\end{cor}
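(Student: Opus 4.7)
The plan is to identify the fibre $\mathcal F=\mathcal F_\mu=\{\omega\in S_M : \omega^n=\mu\}$ as a regular level set of a tame submersion between Fr\'echet manifolds, and then invoke the ILH implicit function theorem. This bypasses the slight circularity of trying to deduce the fibre structure from the associated bundle $E$, which itself was built with fibre $\mathcal F$.

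I would first establish that the ambient space $\tilde S^+_M$ is a Fr\'echet manifold. Since $\Omega^2(M)$ is Fr\'echet, closed $2$-forms $\Omega^2_{\operatorname{cl}}(M)=\ker d$ form a closed linear subspace, and non-degeneracy is a pointwise open condition; hence $\tilde S^+_M$ is open in $\Omega^2_{\operatorname{cl}}(M)$ with $T_\omega\tilde S^+_M=\Omega^2_{\operatorname{cl}}(M)$. Next, I would look at the smooth map $\Psi\colon\tilde S^+_M\to\operatorname{Dens}(M)$, $\omega\mapsto\omega^n$, whose derivative is $D\Psi_\omega(\dot\omega)=n\,\dot\omega\wedge\omega^{n-1}$. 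The key linear ingredient is that non-degeneracy of $\omega$ makes $\alpha\mapsto\alpha\wedge\omega^{n-1}$ a pointwise vector-bundle isomorphism $\Omega^1(M)\to\Omega^{2n-1}(M)$. Given any $\nu\in T_\mu\operatorname{Dens}(M)=\Omega^{2n}(M)$, I would use $H^{2n}(M;\RR)\cong\RR$ together with $[\omega^n]\neq 0$ to decompose $\nu=c\omega^n+d\beta$, then let $\alpha$ be the unique $1$-form with $\alpha\wedge\omega^{n-1}=\beta/n$ and set $\dot\omega=(c/n)\omega+d\alpha$. Using $d\omega=0$ one checks $\dot\omega$ is closed and $n\dot\omega\wedge\omega^{n-1}=c\omega^n+d\beta=\nu$, so $\Psi$ admits an explicit linear right inverse $\nu\mapsto\dot\omega$.

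The main obstacle is invoking an implicit function theorem outside the Banach category. I would verify that the right inverse constructed above is tame: it is assembled from the exterior derivative, wedging with the smooth form $\omega^{n-1}$, and the pointwise algebraic inverse of $\wedge\omega^{n-1}$, none of which lose derivatives when completed in any Sobolev norm on $\Omega^k(M)$. Hence $\Psi$ is a tame submersion in the Nash--Moser sense, equivalently an ILH submersion with respect to the Sobolev filtration, and the Nash--Moser (or equivalently ILH) implicit function theorem furnishes Fr\'echet manifold charts on the level set $\Psi^{-1}(\mu)=\mathcal F_\mu$, with tangent space $\{\dot\omega\in\Omega^2_{\operatorname{cl}}(M) : \dot\omega\wedge\omega^{n-1}=0\}$. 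The normalization $\int\omega^n/n!=1$ defining $S_M$ is automatic on $\mathcal F_\mu$ because $\mu\in\operatorname{Prob}(M)$, so $\mathcal F_\mu$ sits as a Fr\'echet submanifold of $S_M$. An alternative route bypassing Nash--Moser would be to pick a local smooth section of the ILH principal bundle $\operatorname{Diff}(M)\to\operatorname{Prob}(M)$ and pull back $2$-forms to trivialize $\pi\colon S_M\to\operatorname{Prob}(M)$ locally, reading off a Fr\'echet chart on $\mathcal F_{\mu_0}$ by restriction to $\{\mu_0\}\times\mathcal F_{\mu_0}$.
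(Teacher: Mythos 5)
Your proposal is correct, and it takes a genuinely different (and more self-contained) route than the paper. The paper offers no argument for this corollary beyond its placement after the associated-bundle theorem: the intended reasoning is that $S_M\cong E=\operatorname{Diff}(M)\times_G\mathcal F$ is an associated bundle of the ILH principal bundle $\operatorname{Diff}(M)\to\operatorname{Prob}(M)$, so local triviality of $\pi$ hands the fibre an ILH structure. As you observe, that construction already treats $\mathcal F$ as a space with a $G$-action, so reading the fibre's manifold structure back off of $E$ is circular unless one supplies it independently. Your argument does exactly that: you exhibit $\mathcal F_\mu$ as the level set $\Psi^{-1}(\mu)$ of $\Psi(\omega)=\omega^n$ on the open subset $\tilde S^+_M\subset\Omega^2_{\operatorname{cl}}(M)$, and build a right inverse of $D\Psi_\omega(\dot\omega)=n\,\dot\omega\wedge\omega^{n-1}$ from the linear-algebraic hard Lefschetz isomorphism $L^{n-1}\colon\Lambda^1\to\Lambda^{2n-1}$ together with the cohomological splitting $\nu=c\,\omega^n+d\beta$; the verification that $\dot\omega=(c/n)\omega+d\alpha$ is closed and hits $\nu$ is right, and it recovers the tangent space $\ker L^{n-1}$ stated in the paper. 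One small point to make explicit: producing $\beta$ from the exact form $\nu-c\,\omega^n$ requires inverting $d$, e.g.\ $\beta=d^*G(\nu-c\,\omega^n)$ via the Green's operator, so the right inverse is not purely pointwise-algebraic as your phrasing suggests; it is nonetheless a tame family in $\omega$, which is all Nash--Moser (or Omori's ILH implicit function theorem) needs. What each approach buys: the paper's is one line once the bundle machinery is accepted, while yours is independent of the associated-bundle theorem, fixes the logical order, and produces the tangent space description for free; your alternative closing remark (trivializing $\pi$ by a local section of $\operatorname{Diff}(M)\to\operatorname{Prob}(M)$) is essentially the rigorous version of what the paper intends.
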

The tangent space $T_{\omega_0}\mathcal F$ of the fibre $\mathcal F$ at symplectic form $\omega_0$
is the kernel of the Lefschetz operator $L^{n-1}$,
\begin{align*}
T_{\omega_0}\mathcal F=\{a\in\Omega^2_{\operatorname{closed}}(M): \ a\wedge\omega_0^{n-1}=0\}=\operatorname{ker}(L^{n-1}),    
\end{align*}
where $L:\Omega^k(M)\to\Omega^{k+2}(M)$, $L(\alpha)=\omega\wedge\alpha$. In particular for the case of K\"{a}hler manifold $(M,\omega_0)$, $T_{\omega_0}\mathcal F$ is exactly the set of all primitive closed $2$-forms in the sense of Lefschetz decomposition.

\subsection{The right invariant $L^p$-metric on the space of symplectic forms}
Now we can define the $L^p$-metric on $\tilde S^+_M$, and show that it is essentially the $L^p$-Fisher-Rao metric when restricted to $S_M$.
\begin{defi}
For any $\alpha,\beta\in T_{\omega_0}\tilde S^+_M$, the (degenerate) $L^p$-metric on $\tilde S^+_M$ is given by 
\begin{equation}\label{lpsymplectic}
F(\omega_0,\beta):= \left(\int_M\left|\frac{\beta\wedge\omega_0^{n-1}}{\omega_0^{n}}\right|^p\frac{\omega_0^{n}}{n!}\right)^{1/p}, 
\end{equation}

and in particular for $p=2$, we have the (degenerate) $L^2$-inner product
\begin{equation}\label{l2symplectic}
G_{\omega_0}(\alpha,\beta):= \int_M \left|\frac{\alpha\wedge\omega_0^{n-1}}{\omega_0^{n}}\frac{\beta\wedge\omega_0^{n-1}}{\omega_0^{n}}\right|\frac{\omega_0^{n}}{n!}.    
\end{equation}
\end{defi}

\begin{thm}
Consider the $L^p$-metric 
$F$ on $S_M$ and the $L^p$-Fisher-Rao metric on $\operatorname{Prob}(M)$.
The projection  $\pi:S_M\to\operatorname{Prob}(M)$  via $\pi:\omega\to\omega^n$ is a local isometry.  In addition, the $L^p$-metric $F$ on the orbit space of any $\omega_0\in S_M$ under action of $\operatorname{Diff}(M)$ is equivalent to the degenerate $\dot W^{1,p}$-metric on $\operatorname{Diff}(M)$.     
\end{thm}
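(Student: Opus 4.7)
The proof splits into two independent pieces. For the local isometry claim, the plan is to compute the differential of $\pi$ at a symplectic form $\omega \in S_M$. Taking $\pi(\omega) = \omega^n/n!$ so that the image lies in $\operatorname{Prob}(M)$, the Leibniz rule for wedge powers gives
\[
d\pi_\omega(\beta) = \frac{\beta \wedge \omega^{n-1}}{(n-1)!}, \qquad \beta \in T_\omega S_M.
\]
The kernel of this map is exactly $T_\omega \mathcal F = \ker L^{n-1}$, which matches the fibre tangent space computed above, confirming that $\pi$ is a submersion. Substituting $d\pi_\omega(\beta)$ into the $L^p$-Fisher-Rao metric on $\operatorname{Prob}(M)$ and comparing with the expression \eqref{lpsymplectic} of $F$ at $\omega$ shows that the two integrals agree up to a universal factor $n^p$ arising from the normalization of $\pi$. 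This establishes the local isometry on horizontal directions.

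For the second statement, the plan is to parametrize the orbit $\mathcal O_{\omega_0} = \{\varphi^*\omega_0 : \varphi \in \operatorname{Diff}(M)\}$ via the orbit map $\Theta: \varphi \mapsto \varphi^*\omega_0$. Its stabilizer is the symplectomorphism group $\operatorname{Symp}(M,\omega_0)$, and its differential at the identity sends $u \in T_e \operatorname{Diff}(M)$ to $\mathcal L_u \omega_0 = d\iota_u \omega_0$ by Cartan's formula together with $d\omega_0=0$. The key algebraic identity to prove is
\[
d\iota_u \omega_0 \wedge \omega_0^{n-1} = \frac{1}{n}\, \mathcal L_u(\omega_0^n) = \frac{1}{n}\, \operatorname{div}(u)\, \omega_0^n,
\]
where the divergence is taken with respect to the volume form $\omega_0^n/n!$. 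Substituting this into \eqref{lpsymplectic} reduces $F(\omega_0, d\iota_u\omega_0)^p$ to $n^{-p}\int_M |\operatorname{div}(u)|^p\, \omega_0^n/n!$, a fixed positive multiple of the degenerate $\dot W^{1,p}$-norm appearing in \eqref{sobolevmetricdiff}. Right-invariance of the orbit map then transports this identification along the orbit.

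The main obstacle I foresee is not the calculations but the interpretation. The $L^p$-metric $F$ is genuinely degenerate, vanishing on all of $T_\omega\mathcal F$, so the equivalence in the second part only makes sense after quotienting by $\operatorname{Symp}(M,\omega_0)$; the kernel of the orbit map at the identity must be identified with the symplectic vector fields, which follows at once from $\mathcal L_u\omega_0 = 0$ but needs to be recorded explicitly. Equally, one must settle whether \emph{local isometry} in the first claim is read literally or up to the universal scalar that comes from the factorial normalizations; the cleanest reading is that $\pi$ is a Riemannian submersion after absorbing this constant into the definition of $F$.
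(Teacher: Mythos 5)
Your proposal is correct and follows essentially the same route as the paper: part one is the same differential computation $d\pi_\omega(\beta)=c_n\,\beta\wedge\omega^{n-1}$ with the metrics matching up to the universal normalization constant, and part two's identity $d\iota_u\omega_0\wedge\omega_0^{n-1}=\tfrac1n\mathcal L_u(\omega_0^n)=\tfrac1n\operatorname{div}(u)\,\omega_0^n$ is just the infinitesimal (Cartan-formula) version of the paper's flow/Jacobian computation $\partial_t\omega(t)^n=\operatorname{div}v(t)\circ\varphi(t)\cdot\operatorname{Jac}_{\omega_0^n}\varphi(t)\,\omega_0^n$. Your closing remarks on the degeneracy along $T_\omega\mathcal F$ and on reading ``local isometry'' only up to the factorial normalization are accurate and, if anything, more careful than the paper's own treatment.
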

\begin{proof}
The local isometry is clear when we take the derivative of the volume form $\omega^n$, which is $n\beta\wedge\omega^{n-1}$. 
 A smooth path of diffeomorphisms $\varphi(t)$ emanating from $\omega_0$ generates the curve of tangents $v(t):=\varphi_t\circ\varphi^{-1}$ in Eulerian coordinates. Its operation on the volume form is given by
 \begin{align*}
 \omega(t)^n=\varphi(t)^*\omega_0^n=\operatorname{Jac_{\omega_0^n}(\varphi(t))}\omega_0^n,    
 \end{align*}
 where $\operatorname{Jac}_{\omega_0^n}(\varphi)$ denotes the Jacobian of the mapping $\varphi$ 
 with respect to the volumn form
 $\omega_0^n$. Then by taking derivative of $t$,
we get
\begin{equation}
\partial_t\omega(t)^n=\partial_t\operatorname{Jac}_{\omega_0^n}\varphi(t)\omega_0^n=\operatorname{div}v(t)\circ\varphi(t)\cdot\operatorname{Jac}_{\omega_0^n}\varphi(t)\omega_0^n.   
\end{equation}
After taking $t=0$, we get the desired result.
\end{proof}

\begin{expl}
By Darboux's theorem, we can characterize the local form of the integrand in~\ref{lpsymplectic}, which is consistent with the preceding theorem. Indeed, within an open neighborhood $U$ of $M$, we can write the symplectic form $\omega_0$ as the standard form
$\omega_0=\sum dx_i\wedge dy_i$. Any $1$-form can then be expressed as $\sigma=\sum-f_i dx_i+g_i dy_i$, with $f_i$ and $g_i$ smooth. Therefore, 
\begin{align*}
\beta\wedge\omega^{n-1}_0=d\sigma\wedge\omega^{n-1}_0=\operatorname{div}(\sigma)\omega^{n}_0.    
\end{align*}
\end{expl}

According to isometry $\pi$, we see that the pullback metric $F_p$ from $L^p$-Fisher-Rao metric exactly measures the horizontal tangent bundle of $S_M$, and the vertical tangent bundle vanishes identically for this metric. Someone may raise the question of the necessity of such metrics.
The following \u{C}encov type theorem for $S_M$ gives us a satisfactory answer.
\begin{prop}
The inner product $G$ is the only Riemannian metric (up to a multiple)
on the horizontal tangents of $S_M$ that is 
invariant under all diffeomorphisms from $M$  to itself.  
\end{prop}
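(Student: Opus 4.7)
The strategy is to transfer the uniqueness problem from $S_{M}$ to $\operatorname{Prob}(M)$ via the $\operatorname{Diff}(M)$-equivariant local isometry $\pi$ of the preceding theorem, and then invoke the classical Čencov-type statement Theorem~\ref{chentsov}. First one checks that $\pi\colon\omega\mapsto\omega^{n}/n!$ is $\operatorname{Diff}(M)$-equivariant, since pullback commutes with the wedge product: $\pi(\varphi^{*}\omega)=\varphi^{*}\pi(\omega)$ for every $\varphi\in\operatorname{Diff}(M)$. The Lefschetz map $d\pi_{\omega}\colon\beta\mapsto n\beta\wedge\omega^{n-1}$ is a linear isomorphism of the horizontal subspace $H_{\omega}\subset T_{\omega}S_{M}$ onto $T_{\pi(\omega)}\operatorname{Prob}(M)$ that intertwines the pullback actions and, by the preceding theorem, carries $(H_{\omega},G_{\omega})$ isometrically onto $T_{\pi(\omega)}\operatorname{Prob}(M)$ equipped with the Fisher-Rao inner product, up to a universal positive constant.

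Next, given any smooth $\operatorname{Diff}(M)$-invariant Riemannian metric $H$ on the horizontal bundle of $S_{M}$, one uses $d\pi$ to transport it into a family of inner products $\tilde H_{\omega}$ on $T_{\pi(\omega)}\operatorname{Prob}(M)$ indexed by $\omega\in S_{M}$. The invariance of $H$ then translates into
\begin{equation*}
\tilde H_{\varphi^{*}\omega}(\varphi^{*}a,\varphi^{*}b)=\tilde H_{\omega}(a,b),\qquad \varphi\in\operatorname{Diff}(M),\ a,b\in T_{\pi(\omega)}\operatorname{Prob}(M),
\end{equation*}
and specialising $\varphi$ to the stabiliser $\operatorname{Symp}(\omega)\subset\operatorname{Diff}_{\pi(\omega)}(M)$ shows that each $\tilde H_{\omega}$ is already invariant under the symplectomorphism group of $\omega$.

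The main obstacle is the descent step: one must show that $\tilde H_{\omega}$ depends only on $\pi(\omega)$, so that it defines a $\operatorname{Diff}(M)$-invariant Riemannian metric $\bar H$ on $\operatorname{Prob}(M)$. For $\omega,\omega'$ lying in the same fibre $\mathcal{F}_{\mu}$ of $\pi$, a Moser-type argument produces $\psi\in\operatorname{Diff}_{\mu}(M)$ with $\psi^{*}\omega=\omega'$, after which equivariance gives $\tilde H_{\omega'}=(\psi^{-1})^{*}\tilde H_{\omega}$. Combining this identity with the pointwise $\operatorname{Symp}(\omega)$-invariance, the smoothness of $H$ on the horizontal bundle, and the transitivity of $\operatorname{Diff}_{\mu}(M)$ on connected components of $\mathcal{F}_{\mu}$, one seeks to bootstrap the stabiliser invariance at a single symplectic form into full $\operatorname{Diff}_{\mu}(M)$-invariance of $\tilde H_{\omega}$, and thereby to conclude $\tilde H_{\omega}=\tilde H_{\omega'}$. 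Once the descent is in place, $\bar H$ is a smooth $\operatorname{Diff}(M)$-invariant Riemannian metric on $\operatorname{Prob}(M)$; since $\dim M=2n\geq 2$, Theorem~\ref{chentsov} forces $\bar H$ to be a positive multiple of the Fisher-Rao metric, and pulling back by $d\pi$ finally delivers $H=cG$ for some $c>0$.
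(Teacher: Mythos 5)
Your overall strategy is the same as the paper's: push the problem down through the surjective submersion $\pi\colon S_M\to\operatorname{Prob}(M)$ and invoke Theorem~\ref{chentsov}. The paper's own proof is three sentences long and simply asserts that ``the invariance of a Riemannian metric on $S_M$ descends to the one on $\operatorname{Prob}(M)$''; you have correctly located the real content of the argument in exactly this descent step, namely showing that the transported inner product $\tilde H_\omega$ on $T_{\pi(\omega)}\operatorname{Prob}(M)$ depends only on $\mu=\pi(\omega)$ and not on the choice of $\omega$ in the fibre $\mathcal F_\mu$.

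The problem is that you do not actually close this step, and the route you sketch for it does not work. You write that ``a Moser-type argument produces $\psi\in\operatorname{Diff}_\mu(M)$ with $\psi^*\omega=\omega'$'' for any two $\omega,\omega'$ in the same fibre, and later appeal to ``the transitivity of $\operatorname{Diff}_\mu(M)$ on connected components of $\mathcal F_\mu$''. Neither is true in general: Moser stability requires a path of \emph{cohomologous} symplectic forms joining $\omega$ to $\omega'$, whereas two symplectic forms with the same volume form $\mu$ can lie in different de Rham classes (already on a $4$-manifold with $b_2>1$), or in the same class without being isotopic; so there need be no diffeomorphism at all carrying $\omega$ to $\omega'$, volume-preserving or not. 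Consequently the equivariance of $H$ under $\operatorname{Diff}(M)$ only relates the values of $\tilde H$ at points of a single $\operatorname{Diff}(M)$-orbit, and the fibre $\mathcal F_\mu$ is generally much larger than one orbit of $\operatorname{Diff}_\mu(M)$. Your final sentence (``one seeks to bootstrap\dots and thereby to conclude $\tilde H_\omega=\tilde H_{\omega'}$'') concedes that the bootstrap is not carried out, so the proposal stops exactly where the proof has to begin. To repair it you would either need an additional argument showing that an invariant inner product on the horizontal bundle can only depend on $\omega$ through $\omega^n$ (for instance, by exhibiting enough diffeomorphisms, or by a pointwise invariant-theory argument in the spirit of the proof of Theorem~\ref{chentsov}), or you would have to weaken the statement to uniqueness along a single $\operatorname{Diff}(M)$-orbit, where your equivariance argument does suffice.
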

\begin{proof}
This is a straightforward consequence of Theorem~\ref{chentsov}.   Indeed, the map $\pi:S_M\to\operatorname{Prob}(M)$ is a surjective submersion. Then the invariance of a Riemannian metric on  
$S_M$
descends to the one on $\operatorname{Prob}(M)$. Hence the Riemannian metric is the pullback of the Fisher-Rao information metric.
\end{proof}
\begin{rem}
The question remains  whether there is a nontrivial metric on the fibre $F$ invariant under all diffeomorphisms, and if it exists whether it is unique by rescaling. Then together with the preceding theorem, we are able to determine all the Riemannian metrics invariant on $S_M$.    
\end{rem}
\subsection{Hodge decomposition of the space of symplectic forms}
%In this subsection, we restrict our scope to a closed K\"{a}hler manifold$(M, \omega_K)$. A K\"{a}hler manifold is a symplectic manifold $(M,\omega_K)$ equipped with an integrable compatible almost complex structure $J$. This implies that there is a positive definite inner product on the tangent space of $M$, i.e., $g(u,v)=\omega_K(u,Jv)$ for any $u,v\in TM$. Then the situation becomes much more simplified since now the Riemannian manifold structure $g$ on $M$ allows us to use the Hodge decomposition.
The standard fact that every compact manifold $M$ can be endowed with a Riemmanian metric $g$ allows us to use the Hodge decomposition. 
\begin{thm}
$\tilde S^+_M$ can be decomposed as the direct sum of $S_M$ and a real line,
$\tilde S^+_M=S_M\times\mathbb R^*$ (or $\tilde S^+_M=S_M\times\mathbb R^+$, depending on $n$).
$S_M$ is the disjoint union of finitely many pieces $A_\gamma$, with respect to each harmonic form $\gamma$ on $M$. Each piece $A_\gamma$ is a  submanifold of the quotient space $\Omega^1/\Omega^1_{\operatorname{closed}}$, where $\Omega^1_{\operatorname{closed}}$ is the set of all closed $1$-forms.   
\end{thm}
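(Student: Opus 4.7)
My plan is to handle the two assertions separately. For the decomposition $\tilde S^+_M \cong S_M\times\mathbb R$, I would exploit the scaling action of $\mathbb R$ on $2$-forms. Given $\omega\in\tilde S^+_M$, its total volume $V(\omega):=\int_M\omega^n/n!$ is strictly positive and transforms as $V(\lambda\omega)=\lambda^n V(\omega)$. For $n$ odd, only positive $\lambda$ preserves the positivity of $\omega^n$, and the map $\omega\mapsto\bigl(V(\omega)^{-1/n}\omega,\,V(\omega)\bigr)$ is a diffeomorphism $\tilde S^+_M\to S_M\times\mathbb R^+$. For $n$ even, all nonzero scalars preserve $\tilde S^+_M$ (since $\lambda^n>0$ regardless of sign), yielding $\tilde S^+_M\cong S_M\times\mathbb R^*$ after fixing a convention to resolve the two-fold ambiguity $(\omega_0,\lambda)\leftrightarrow(-\omega_0,-\lambda)$.

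For the partition of $S_M$, I would fix an auxiliary Riemannian metric $g$ on $M$ and invoke the Hodge decomposition $\Omega^2(M)=\mathcal H^2_g(M)\oplus d\Omega^1(M)\oplus \delta\Omega^3(M)$. Every $\omega\in S_M$ is closed, so its $\delta\Omega^3$-component vanishes, giving a unique decomposition $\omega=\gamma+d\sigma$ with $\gamma=H\omega$ the harmonic projection and $\sigma\in\Omega^1(M)$ determined modulo closed $1$-forms. Setting
\[
A_\gamma:=\{\omega\in S_M:H\omega=\gamma\}
\]
yields the disjoint decomposition $S_M=\bigsqcup_\gamma A_\gamma$, indexed by those harmonic $2$-forms $\gamma$ that actually arise as harmonic parts of elements of $S_M$. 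A repeated Stokes argument using $d\gamma=0$ shows $\int\omega^n=\int\gamma^n$, so the normalization $\int\omega^n/n!=1$ translates into a cohomological constraint: $\gamma$ lies on the affine hyperplane $\{\int\gamma^n/n!=1\}$ inside the finite-dimensional space $\mathcal H^2_g(M)\cong H^2(M,\mathbb R)$. This is how I would interpret ``finitely many'': the index set is finite-dimensional, carved out of a finite-dimensional vector space (rather than literally finite).

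To identify $A_\gamma$ with a submanifold of $\Omega^1/\Omega^1_{\operatorname{closed}}$, I introduce the map
\[
\Phi_\gamma:\ \Omega^1/\Omega^1_{\operatorname{closed}}\longrightarrow\Omega^2(M),\qquad [\sigma]\longmapsto \gamma+d\sigma,
\]
which is well-defined since $d$ annihilates closed $1$-forms, and injective because $\gamma+d\sigma_1=\gamma+d\sigma_2$ forces $\sigma_1-\sigma_2$ to be closed. Non-degeneracy of $\gamma+d\sigma$ is a pointwise open condition, hence carves out a Fr\'echet open subset $U_\gamma\subset\Omega^1/\Omega^1_{\operatorname{closed}}$, and $\Phi_\gamma|_{U_\gamma}:U_\gamma\to A_\gamma$ is a smooth bijection; its inverse can be written explicitly as $\omega\mapsto[\delta G\omega]$ using the Green's operator $G$ of $g$, which furnishes the Hodge-theoretic complement $\delta\Omega^2(M)$ to $\Omega^1_{\operatorname{closed}}$ inside $\Omega^1(M)$.

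The main obstacle I foresee is the functional-analytic cleanup: verifying that $\Omega^1_{\operatorname{closed}}$ is a closed, complemented subspace of $\Omega^1(M)$ so that the quotient is a genuine Fr\'echet space, that non-degeneracy descends to an open condition on the quotient, and that $\Phi_\gamma$ is a smooth embedding rather than merely a continuous bijection. Once $\Omega^1/\Omega^1_{\operatorname{closed}}$ is identified with the tame Fr\'echet space $\delta\Omega^2(M)$ via Hodge theory, these steps become routine, and the final statement follows by combining the rescaling factorization of the first paragraph with the Hodge-theoretic slicing of the remaining ones.
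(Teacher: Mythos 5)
Your proposal follows essentially the same route as the paper's proof: the scaling action handles the splitting $\tilde S^+_M\cong S_M\times\mathbb R^{+}$ (resp.\ $\mathbb R^{*}$), the Hodge decomposition $\omega=\gamma+d\sigma$ indexes the pieces $A_\gamma$ by the harmonic part, the Stokes computation $\int\omega^n=\int\gamma^n$ shows the normalization constrains only $\gamma$, and openness of non-degeneracy identifies $A_\gamma$ with an open subset of $\Omega^1/\Omega^1_{\operatorname{closed}}\cong d\Omega^1$. You are in fact more careful than the paper on several points (the explicit diffeomorphism $\omega\mapsto(V(\omega)^{-1/n}\omega,V(\omega))$, the injectivity of $[\sigma]\mapsto\gamma+d\sigma$, and the Green's-operator inverse), and these additions are all correct.

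The one substantive divergence is the ``finitely many pieces'' claim. The paper argues that since harmonic forms are canonical representatives of $H^2_{DR}(M,\mathbb R)$, ``there are only finitely many candidates for such $\gamma$''; but $H^2_{DR}(M,\mathbb R)$ is a finite-\emph{dimensional} vector space, not a finite set, and the normalization $\int\gamma^n=n!$ cuts out a hypersurface in it, which is generically uncountable (e.g.\ on $T^4$ a continuum of constant-coefficient symplectic forms with unit volume occur as harmonic parts, so $S_{T^4}$ has uncountably many nonempty pieces $A_\gamma$). Your reinterpretation --- that the index set is carved out of a finite-dimensional space rather than being literally finite --- is the honest reading, and literal finiteness holds only in special cases such as $b_2(M)=1$. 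So where you hedge, you are right to, and the paper's own justification of that clause does not go through as stated.
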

\begin{proof}
By Hodge decomposition with respect to the metric $g$, any symplectic form $\omega$ decomposes uniquely as
\begin{align*}
\omega=d\alpha+\gamma,
\end{align*}
where $\gamma$ is harmonic. The only part that makes a contribution to the total volume is $\gamma$, so the first assertion is obvious. Since the harmonic part $\gamma$ is the canonical representative of the de Rham cohomology class $H^2_{DR}(M,\mathbb R)$, there are only finitely many candidates for such $\gamma$. Therefore, $S_M$ consists of finitely many disjoint pieces $A_\gamma$ with respect to linearly independent harmonic forms $\gamma$, as the decomposition is unique and $\int\gamma^n=n!$. Each piece $A_\gamma$ can be characterized by the differential of $1$-forms $d\Omega^1\cong \Omega^1/\Omega^1_{\operatorname{closed}}$. For any $\alpha$ in $A_\gamma$, the corresponding $\omega$ is non-degenerate, and any small perturbation of $\alpha$ in the sense of $C^0$-topology will not change the non-degeneracy of $\omega$, thanks to the  compactness of $M$. In other words, $A_\gamma$ is a submanifold with a path-connected neighborhood.
\end{proof}
The next proposition characterizes the neighborhood of a symplectic form $\omega_0$ in $A_\gamma$ as a path-connected neighborhood of identity in $\operatorname{Diff}(M)$. 
\begin{prop}
Every symplectic form $\omega_0$ in $A_\gamma$ has a path-connected $C^0$-neighborhood in which it is of the form $\varphi^*\omega_0$ for some diffeomorphism $\varphi$. The converse is also true, there exists a $C^0$-neighborhood of identity in $\operatorname{Diff}(M)$ such that every $\varphi^*\omega_0$ and $\omega_0$ are path-connected.
\end{prop}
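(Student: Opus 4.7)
The plan is to reduce both assertions to Moser's stability theorem, exploiting the observation that two symplectic forms inside the same component $A_\gamma$ share the same harmonic representative $\gamma$ and therefore differ by an exact form.

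First I would prove the direct assertion. Choose a convex $C^0$-neighborhood $\mathcal V \subset A_\gamma$ of $\omega_0$ small enough that for any $\omega_1 \in \mathcal V$ the linear interpolation $\omega_t := (1-t)\omega_0 + t\omega_1$ is non-degenerate for every $t\in[0,1]$. Such a neighborhood exists because non-degeneracy of $\omega^n$ is a $C^0$-open condition, thanks to the compactness of $M$ and the strict lower bound on $|\omega_0^n|$. Since both endpoints lie in $A_\gamma$ they have the same harmonic part, hence $\omega_1-\omega_0 = d\beta$ for some $1$-form $\beta$. Moser's trick then produces a time-dependent vector field $X_t$ determined by $i_{X_t}\omega_t = -\beta$, well defined because $\omega_t$ is non-degenerate, and $\dot\omega_t+d(i_{X_t}\omega_t)=d\beta-d\beta=0$. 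Integrating $X_t$ yields a smooth family of diffeomorphisms $\varphi_t$ satisfying $\varphi_t^*\omega_t = \omega_0$, so $\omega_1 = (\varphi_1^{-1})^*\omega_0$; setting $\varphi := \varphi_1^{-1}$ gives $\omega_1 = \varphi^*\omega_0$ as required. Path-connectedness comes for free: the linear path $\omega_t$ itself stays in $\mathcal V$ by convexity and joins $\omega_0$ to $\omega_1$.

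For the converse I would observe that any diffeomorphism $\varphi$ sufficiently close to the identity admits a smooth isotopy $\varphi_s$ to the identity; a concrete construction is the Riemannian geodesic interpolation $\varphi_s(x) = \exp_x\bigl(s\cdot\exp_x^{-1}\varphi(x)\bigr)$ using a fixed Riemannian metric on $M$, which produces a genuine smooth family of diffeomorphisms provided $\varphi$ lies in a small enough neighborhood. Then $s \mapsto \varphi_s^*\omega_0$ is a continuous path in $A_\gamma$ joining $\omega_0$ to $\varphi^*\omega_0$. By shrinking the neighborhood of the identity further if necessary, this path can be arranged to lie inside the neighborhood $\mathcal V$ of $\omega_0$ built in the first part, tying the two halves of the statement together.

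The main obstacle is the topology mismatch. The pullback operation $\varphi \mapsto \varphi^*\omega_0$ requires one derivative of $\varphi$, so it is not $C^0$-continuous. Consequently a genuine $C^0$-small neighborhood of the identity does not automatically map to a $C^0$-small neighborhood of $\omega_0$, and the exponential interpolation need not produce diffeomorphisms at intermediate times under only $C^0$-smallness. I would resolve this by tacitly strengthening the $C^0$-neighborhood of the identity to a $C^1$-neighborhood (which on the compact manifold $M$ is automatically $C^0$-small); this restores continuity of the pullback and validity of the interpolation argument, and appears to be the intended reading of the statement.
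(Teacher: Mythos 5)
Your proof is correct and is, in substance, a more explicit version of the paper's argument; both ultimately rest on Moser. The differences are worth noting. For the direct assertion, the paper pulls back a neighborhood of $\omega_0^n$ under the volume map $\pi$, takes the path component containing $\omega_0$, argues that the harmonic part cannot jump along a path, and then cites the Moser stability theorem as a black box; you instead build a convex $C^0$-neighborhood inside $A_\gamma$ directly, observe that two forms with the same harmonic representative differ by an exact form, and run the Moser deformation $i_{X_t}\omega_t=-\beta$ by hand, which also hands you path-connectedness via the linear interpolation itself. For the converse, the paper simply takes the path component of $f^{-1}(V)$ containing the identity for the map $f(\varphi)=\varphi^*\omega_0$ and calls $f$ continuous, whereas you construct an explicit isotopy $\varphi_s$ by Riemannian geodesic interpolation and push $\omega_0$ along it. Your version is the more careful one on exactly the point where the paper is silent: $f$ is \emph{not} continuous for the $C^0$-topology on $\operatorname{Diff}(M)$, since the pullback involves $d\varphi$, so the paper's $f^{-1}(V)$ is only open in a $C^1$ (or finer) topology. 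Your explicit acknowledgement that the statement should be read with a $C^1$-neighborhood of the identity (which is in particular $C^0$-small on a compact $M$) repairs a gap that the paper's own proof leaves open, and the same caveat is needed for the intermediate diffeomorphisms $\varphi_s$ to exist. In short: same engine, but your route is more self-contained and more honest about the topology mismatch.
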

\begin{proof}
Consider any neighborhood  
$U$ of $\omega_0^n$ in $\pi(A_\gamma)$, then $\omega_0$ is contained in one of the path-connected components of $\pi^{-1}(U)$, namely $V$. Every $\omega$ in a path-connected component should remain in the same cohomology class, since the harmonic parts are discrete. Thus $V\subset A_\gamma$, and the first part of proof follows from Moser stability theorem. For the converse, consider the preimage $f^{-1}(V)$ of the following continuous mapping 
\begin{align*}
f:\operatorname{Diff}(M)\to A_\gamma \ \ \text{via}: \ \ \varphi\to\varphi^*\omega_0,  
\end{align*}
it is clear that the path connected component of $f^{-1}(V)$ containing identity is the desired neighborhood of identity.
\end{proof}

\section{Higher dimensional Gelfand-Fuchs cocycles} \label{gelfandsec}
The classical Virasoro group (or Bott-Virasoro group) is an infinite dimesional Lie group defined as the universal central extension of $\operatorname{Diff}(S^1)$. Its Lie algebra is generated by the Gelfand-Fuchs $2$-cocycle, and is strictly related to the KdV equation, which can be viewed as the Euler-Arnold equation of the geodesic flow on the Virasoro group~\cite{khesin1987}.
In this section, we generalize the classical Gelfand-Fuchs $2$-cocyles to a $n+1$-cocycle in the space of probability densities on any closed manifold $M$ of dimension $n$.

\subsection{The Bott-Thurston cocycle on $\operatorname{Diff}(M)$}
The Bott-Thurston cocycle was originally introduced in Bott's paper~\cite{bott1977} as a tool for characteristic classes on $\operatorname{Diff}(M)$ of orientation preserving diffeomorphisms.
\begin{defi}
Let $\mu$ be a volume form on $M$. For any $\varphi\in\operatorname{Diff}(M)$, we define $\alpha(\varphi)\in C^\infty(M,\mathbb R)$ as follows
\begin{align*}
\alpha(\varphi):=\frac{\varphi^*(\mu)}{\mu}.    
\end{align*}
Then the following $n+1$-cocycle $c$ is an element in the group cohomology $H^{n+1}  (\operatorname{Diff}(M), \mathbb R)$,
\begin{align*}
c(\varphi_1,\cdots,\varphi_{n+1}):&=\int_M \operatorname{log}\alpha(\tilde\varphi_1) \ d\operatorname{log}\alpha(\tilde\varphi_2)\wedge\cdots\wedge  d\operatorname{log}\alpha(\tilde\varphi_{n+1})
\\&=\int_M \operatorname{log}\operatorname{Jac}_\mu(\tilde\varphi_1) \ d\operatorname{log}\operatorname{Jac}_\mu(\tilde\varphi_2)\wedge\cdots\wedge  d\operatorname{log}\operatorname{Jac}_\mu(\tilde\varphi_{n+1}),
\end{align*}
where $\varphi_1,\cdots,\varphi_{n+1}\in\operatorname{Diff}(M)$, and $\tilde\varphi_i=\varphi_1\varphi_2\cdots\varphi_{i}$ for $1\leq i\leq n+1$.
\end{defi}
This construction has a great importance in infinite dimensional Lie group and dynamics. For the case $n=1$, i.e., $M$
is a circle, the Virasoro group is the set of pairs $(\varphi,a)\in\operatorname{Diff}(S^1)\times\mathbb R$ satisfying the multiplication law
\begin{align*}
 (\varphi,a)\circ(\psi,b)=\left(\varphi\circ\psi, \ a+b+c(\varphi,\psi) \right),   
\end{align*}
where the $2$-cocycle $c$ here is the Bott-Thurston cocycle defined above.

The Virasoro algebra on $S^1$ can be derived from the second mixed derivative of the Virasoro group. In the following Proposition~\ref{antisymomega}, we generalize this construction to that on any closed manifold $M$.
\begin{defi}\label{vir}
The Virasoro algebra is the vector space
$\operatorname{Vect(S^1)}\oplus\mathbb R$ equipped with the following commutation operation:
\begin{align*}
[(f(x)\partial_x,a),(g(x)\partial_x,b)]=\left((f'(x)g(x)-f(x)g'(x))\partial_x, \int f'(x)g''(x)dx\right),    
\end{align*}
for any two elements $(f(x)\partial_x,a)$ and $(g(x)\partial_x,b)$ in the Lie algebra.
\end{defi}

 The $n+1$-th mixed derivative of $c$ generates an antisymmetric form on the tangent bundle $T\operatorname{Diff}(M)$ strictly related to the Virasoro algebra. Since it vanishes for all the vectors in $T\operatorname{Diff}_\mu(M)$, it can be viewed as an antisymmetric form on $T\operatorname{Prob}(M)$.
\begin{prop}\label{antisymomega}
The below antisymmetric form $\omega$ on $T_\mu\operatorname{Prob}(M)$ is the mixed partial derivative of $c$,  \begin{align*}
\omega(a_1,\cdots,a_{n+1}):&= D^{(n+1)}_{\partial_1,\cdots\partial_{n+1}}c\ (a_1,\cdots,a_{n+1})\\&= \int_M \frac{a_1}{\mu}d\frac{a_2}{\mu}\wedge\cdots\wedge d\frac{a_{n+1}}{\mu},  
\end{align*} 
where each $a_i\in T_\mu\operatorname{Prob}(M)$ is the $i$-th tangent corresponding to the $i$-th component of diffeomorphisms $\varphi_i$, for $i=1,\cdots,n+1$. Moreover, $\omega$ is a $n+1$-cocycle in Lie algebra cohomology. In particular, for $n=1$, $\omega$ is the Gelfand-Fuchs $2$-cocycle.
\end{prop}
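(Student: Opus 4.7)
The plan is to compute the $(n+1)$-fold mixed derivative of $c$ at the identity of $\operatorname{Diff}(M)$ directly, and then verify antisymmetry and the cocycle property of the resulting form. I would parametrize one-parameter families $\varphi_i(t_i)=\operatorname{exp}(t_iu_i)$ by vector fields $u_i$ on $M$, chosen so that the tangent vector $a_i\in T_\mu\operatorname{Prob}(M)$ corresponds to $u_i$ via Moser's trick, in the sense that $a_i=L_{u_i}\mu$ and $a_i/\mu=\operatorname{div}(u_i)$. The multiplicative property of Jacobians unfolds into $\operatorname{log}\alpha(\tilde\varphi_i)=\sum_{j=1}^{i}\operatorname{log}\alpha(\varphi_j)\circ(\varphi_{j+1}\cdots\varphi_i)$, which is a function of $t_1,\ldots,t_i$ only and vanishes identically at $t=0$.

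Since each of the $n+1$ factors in the integrand of $c$ vanishes at $t=0$, the only nonzero contributions to $\partial_{t_1}\cdots\partial_{t_{n+1}}c|_{t=0}$ come from permutations $\sigma$ assigning the derivatives one-per-factor. The constraint $\partial_{t_j}\operatorname{log}\alpha(\tilde\varphi_i)|_{t=0}=0$ for $j>i$ forces $\sigma(i)\leq i$ for every $i$, so by induction $\sigma=\operatorname{id}$. A chain-rule calculation then gives $\partial_{t_i}\operatorname{log}\alpha(\tilde\varphi_i)|_{t=0}=\operatorname{div}(u_i)$, producing
\begin{align*}
\partial_{t_1}\cdots\partial_{t_{n+1}}c\Big|_{t=0}=\int_M\operatorname{div}(u_1)\,d\operatorname{div}(u_2)\wedge\cdots\wedge d\operatorname{div}(u_{n+1}),
\end{align*}
and substituting $\operatorname{div}(u_i)=a_i/\mu$ yields the claimed formula.

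Antisymmetry in the arguments $a_2,\ldots,a_{n+1}$ is immediate from the wedge product, while antisymmetry between $a_1$ and $a_2$ follows from the Leibniz identity $(a_1/\mu)\,d(a_2/\mu)+(a_2/\mu)\,d(a_1/\mu)=d\bigl((a_1/\mu)(a_2/\mu)\bigr)$ combined with Stokes' theorem on the closed manifold $M$, which kills the exact contribution. The Chevalley--Eilenberg cocycle property follows either from van Est's theorem (mixed derivatives of group cocycles always produce Lie algebra cocycles) or by a direct Stokes computation using $\operatorname{div}([u,v])\mu=L_{[u,v]}\mu$. For $n=1$, $M=S^1$ and $\mu=dx$, setting $a_i=u_i'(x)\,dx$ gives $\omega(a_1,a_2)=\int_{S^1}u_1'(x)u_2''(x)\,dx$, matching Definition~\ref{vir} exactly. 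The main obstacle will be the combinatorial argument selecting $\sigma=\operatorname{id}$: one must carefully verify that the cross terms, where a derivative $\partial_{t_j}$ acts on an inner composition appearing inside $\operatorname{log}\alpha(\varphi_k)\circ(\cdots)$, also vanish at $t=0$; this follows because both $\operatorname{log}\alpha(\varphi_k)$ and its spatial gradient vanish when $t_k=0$, but it requires the ordered structure of $\tilde\varphi_i$ rather than an unordered product.
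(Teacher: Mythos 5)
Your proposal is correct and follows essentially the same route as the paper: realize the tangent vectors via Moser's trick, differentiate the group cocycle at the identity, get antisymmetry from Stokes' theorem on the closed manifold, and inherit the Lie-algebra cocycle condition from the group cocycle condition (the paper cites Bott's result directly where you invoke van Est). The only difference is one of detail: the paper dismisses the computation of the mixed derivative as ``straightforward calculations,'' whereas you supply the actual argument --- the unfolding of $\log\alpha(\tilde\varphi_i)$ into an ordered sum, the observation that each factor vanishes at $t=0$ so every factor must absorb exactly one derivative, and the triangularity constraint $\sigma(i)\le i$ forcing $\sigma=\mathrm{id}$ --- which is a genuine and welcome completion of the paper's proof rather than a departure from it.
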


\begin{proof}
By using Moser's trick, any $\mu(t_i)$ emanating from $\mu$ with tangent vector $a_i$ can be realized by some diffeomorphism $\varphi_i^{t_i}$ such that ${\varphi_i^{t_i}}^*\mu=\mu(t_i)$. Then straightforward calculations show that
\begin{align*}
\left.\frac{\partial^n}{\partial t_1\cdots\partial t_{n+1}}\right|_{t_1=\cdots=t_{n+1}=0}c(\varphi_1^{t_1},\cdots,\varphi_{n+1}^{t_{n+1}})= \int_M \frac{a_1}{\mu}d\frac{a_2}{\mu}\wedge\cdots\wedge d\frac{a_{n+1}}{\mu}.  
\end{align*}
The antisymmetry of $\omega$ follows from the Stokes' theorem and the closedness of $M$. In Bott's paper~\cite{bott1977}, the first proposition informs us of the fact that $c$ is a $n+1$-cocycle satisfying the group cocycle condition, so its differential $\omega$ is also closed. For the case $M=S^1$, let $f(x)\partial_x$ and $g(x)\partial_x$ be two smooth vector fields on $S^1$, and $\mu=dx$. Then $f'(x)$ and $g'(x)$ can be realized as $a_i/\mu$, so we have the equality
\begin{align*}
\int \frac{a_1}{\mu}d\frac{a_2}{\mu}=\int f'(x)g''(x)dx, 
\end{align*}
which is exactly the Gelfand-Fuchs $2$-cocycle.
\end{proof}
\begin{rem}
Note that the mixed $n+1$-th partial derivative is the only non-vanishing $n+1$-th derivative on $c$. In other words, if the derivative was not done for some component $t_i$, then this term disappears as $\operatorname{log}(\mu/\mu)=0$.    
\end{rem}

\subsection{The explicit form of $n$-cocycle $\omega$ in $L^p$-sphere}
Taking advantage of the isometry between $\operatorname{Prob}(M)$ and the $L^p$-sphere, we are able to imbed the space of all thus defined $n+1$-cocycles $G_{n+1}$ into an open set of a function space endowed with the $L^p$-norm. 
\begin{thm}
The space of all thus defined $n+1$-cocycles $G_n$ are isometric to that on a open subset of  the $L^p$-sphere with the explicit form shown in~\ref{omegalp}. The antisymmetric form $\omega$ only differs by a multiple for various choices of $p$.
\end{thm}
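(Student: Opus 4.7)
The plan is to pull back the cocycle $\omega$ via the isometric embedding $\Phi$ from Theorem~\ref{lpfisherisometry}, rewrite it in the coordinate $f=p(\mu/dx)^{1/p}$ on the $L^p$-sphere, and then read off the scaling behavior in $p$.

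First I would compute how the ratio $a/\mu$ transforms under $\Phi$. Writing $\rho=\mu/dx$ and identifying $a\in T_\mu\operatorname{Prob}(M)$ with $\delta\rho\, dx$, differentiation of $f=p\rho^{1/p}$ gives $\delta f=\rho^{1/p-1}\delta\rho$. This immediately yields the key identity
$$\frac{a}{\mu}=\frac{\delta\rho}{\rho}=\frac{p\,\delta f}{f}.$$
This single identity is the heart of the argument; everything else is substitution.

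Second, I would substitute this into the defining formula of $\omega$ from Proposition~\ref{antisymomega}. Each of the $n+1$ factors $a_i/\mu$ contributes a $p/f$ and the exterior derivative commutes with the scalar $p$, so collecting a factor $p^{n+1}$ outside produces the explicit form
\begin{equation}\label{omegalp}
\omega(a_1,\dots,a_{n+1})=p^{n+1}\int_M \frac{\delta f_1}{f}\, d\!\left(\frac{\delta f_2}{f}\right)\wedge\cdots\wedge d\!\left(\frac{\delta f_{n+1}}{f}\right),
\end{equation}
defined on the image $\mathcal U'=\Phi(\operatorname{Prob}(M))$, which is an open subset of the $L^p$-sphere by Theorem~\ref{lpfisherisometry}. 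The cocycle property transports automatically across the diffeomorphism $\Phi$, so no additional verification is needed.

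Finally, the second assertion is immediate from~\eqref{omegalp}: the integrand depends only on $f$ and its variations, not on $p$, so varying $p$ only rescales $\omega$ by the overall constant $p^{n+1}$. The only real subtlety in the whole argument is getting the transformation law $a/\mu=p\,\delta f/f$ right, but this is a one-line computation; I do not anticipate any serious obstacle, only the notational bookkeeping of keeping $\rho$, $f$, and their variations cleanly distinguished.
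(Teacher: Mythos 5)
Your proposal is correct and follows essentially the same route as the paper: both push the tangent vectors through the isometry of Theorem~\ref{lpfisherisometry}, use the identity $a/\mu = p\,\delta f/f$ (the paper writes this as $af=\tfrac{1}{p}\tfrac{a}{\mu}f$), and substitute into the formula of Proposition~\ref{antisymomega} to collect the overall factor $p^{n+1}$. The scaling conclusion is read off from the resulting expression exactly as in the paper.
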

\begin{proof}
Fix a volume form $dx$ on $M$.
 Remember in Proposition~\ref{lpfisherisometry}, the isometry to the subset of $L^p$-sphere is given by 
 \begin{align*}
 \mu\to f=p(\frac{\mu}{dx})^{1/p}.    
 \end{align*}
 The tangent vector $a$ at point $\mu$ mapped to the tangent $a f$ at $f$. The tangent vector $a$ operates on $f$ as 
 \begin{align*}
 a f=\frac{a}{dx}(\frac{\mu}{dx})^{1/p-1}= \frac{1}{p}\frac{a}{\mu}f  
 \end{align*}
 Then $\omega$ within the
 $L^p$-sphere can be expressed as 
\begin{equation}\label{omegalp}
\omega=p^{n+1}\int_M \frac{a_1 f}{f} d\frac{a_2 f}{f}\wedge\cdots\wedge d\frac{a_n f}{f}
\end{equation} 
\end{proof}
This theorem is essential since it reduces the complex calculations on group of diffeomorphisms finally to that on the positive part of the $L^p$-sphere. Moreover, it is quite surprising that $\omega$ is intrinsically invariant under the change of $L^p$-norm. Finally, we present a simple case $M=S^1$. 
\begin{expl}
When $M$ is a circle, any volume form $\mu=\varphi^*dx$ for the standard form $dx$ and a diffeomorphism $\varphi$. It can be calculated that $f=\varphi_x^{1/p}$ and $\frac{a_if}{f}=\frac{\varphi_{xt}}{\varphi_{x}}|_{t=0}=(\varphi_t\circ\varphi^{-1})_x$. So we see the smooth vector field here is $\varphi_t\circ\varphi^{-1}$, which is the Eulerian coordinate of $\varphi$.  
\end{expl}

\section{Finsler geometry on Orlicz spaces}\label{orliczsec}
In this section, we generalize the Proudman–Johnson equations in the Orlicz space setting.  This answers Problem 7 in P.~Giblisco~\cite{Gib20}. For this purpose,
we restrict ourselves to the smooth category, and further assume that the Young function $\Phi$ to be smooth at any nonzero point and that all the functions $f$ of Definition~\ref{orliczspace} are smooth. Under this condition, we can show that the Luxemburg norm in the Orlicz space gives rise to a $C^1$-Finsler structure with good geometric properties, such as invariance under diffeomorphisms. If the Young function $\Phi$ is invertible, we can then embed the space of probability densities $\operatorname{Prob}(M)\cong\operatorname{Diff}(M)/\operatorname{Diff}_\mu(M)$ into the sphere of the Orlicz space. In particular for $M$ being a circle, we can calculate the corresponding Euler-Arnold equation.

\subsection{The Finsler structure on the Orlicz space}
First we introduce the concept of an Orlicz space, and present basic properties.
\begin{defi}[Orlicz space]\label{orliczspace}
A Young function $\Phi$ is a symmetric convex function $\Phi: \mathbb{R}\to\mathbb{R}\cup\infty$ strictly increasing on $[0,\infty)$ such that $\Phi(0)=0$ and $\operatorname{lim}_{x\to\infty}x^{-1}\Phi(x)=+\infty$. Let $(X,\mathcal{B},\mu)$ be any measure space and $f : X\to\mathbb{R}$ a measurable function. The Luxemburg norm $||f||_{\Phi}$ is given by
\begin{equation}
||f||_{\Phi}:=\operatorname{inf}\left\{r>0: \int_{X} \Phi(\frac{f}{r})\mu\leq 1\right\}.
\end{equation}
The Orlicz space with respect to the Young function $\Phi$ is defined as
\begin{equation}
L^{\Phi}:=L^{\Phi}(\mu):=\left\{f \operatorname{measurable}: ||f||_{\Phi}<+\infty \right\}.
\end{equation}
\end{defi}

The Orlicz spaces are natural generalization of $L^p$ spaces. Note that when $\Phi(x)=|x|^p$ for $p\in[1,+\infty)$, we actually obtain the $L^p$-norm $\|f\|_{\Phi}=\|f\|_{L^p}$. The Orlicz space is a Banach space, i.e. a completed normed vector space;  see e.g.,~\cite{Ay2017}. Therefore, it is usually suitable for us to calculate variations and by using the Picard-Lindel\"{o}ff theorem for Banach spaces or Banach manifolds, the existence of local geodesic flow is guaranteed.

In the following we show that there is a  Finsler structure on any Orlicz space, to be more precisely,
a Finsler metric $F$ satisfying the positive homogeneity and  subadditivity of a Finsler structure, but which is not smooth in general. In the following we will refer to such an object as a Finsler structure with lower regularity, such as $C^1$.
\begin{lem}
There exists a natural $C^1$-Finsler structure on any Orlicz space $L_\Phi$ endowed with a Luxemburg norm $||\cdot||_\Phi$.    
\end{lem}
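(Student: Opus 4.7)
The plan is to take the Finsler function $F:TL^\Phi \to [0,\infty)$ given by $F(f,v):=\|v\|_\Phi$, independent of the base point $f$, since $L^\Phi$ is a Banach space and hence a Banach manifold with canonically trivial tangent bundle. Positive homogeneity $F(f,\lambda v)=|\lambda| F(f,v)$ and subadditivity $F(f,v+w)\leq F(f,v)+F(f,w)$ are immediate from the norm axioms of the Luxemburg norm (the triangle inequality being a standard consequence of the convexity of $\Phi$). The genuine content is $C^1$ regularity of $v\mapsto \|v\|_\Phi$ on $L^\Phi\setminus\{0\}$, which I would establish via the implicit function theorem on Banach spaces.

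To this end, define $G:L^\Phi\times(0,\infty)\to\mathbb{R}$ by
\begin{align*}
G(v,r):=\int_M \Phi(v/r)\,\mu - 1.
\end{align*}
For $v\neq 0$, the map $r\mapsto \int\Phi(v/r)\,\mu$ is continuous and strictly decreasing (by the strict monotonicity of $\Phi$ on $[0,\infty)$ together with its symmetry), tends to $0$ as $r\to\infty$ and to $+\infty$ as $r\to 0^+$ (by the superlinear growth condition $\Phi(x)/x\to\infty$). Hence there is a unique $r(v)>0$ with $G(v,r(v))=0$, and by continuity this coincides with the infimum $\|v\|_\Phi$. Formal differentiation under the integral sign gives
\begin{align*}
\partial_r G(v,r)=-\frac{1}{r^2}\int_M \Phi'(v/r)\,v\,\mu,\qquad D_vG(v,r)\cdot w=\frac{1}{r}\int_M \Phi'(v/r)\,w\,\mu.
\end{align*}
Since $\Phi'(t)t>0$ for $t\neq 0$, we have $\partial_r G<0$ along the zero set of $G$, so the implicit function theorem yields that $v\mapsto \|v\|_\Phi$ is $C^1$ on $L^\Phi\setminus\{0\}$ with differential
\begin{align*}
D_v\|v\|_\Phi\cdot w=\frac{\int_M \Phi'(v/\|v\|_\Phi)\,w\,\mu}{\int_M \Phi'(v/\|v\|_\Phi)\,(v/\|v\|_\Phi)\,\mu}.
\end{align*}

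The main obstacle is justifying differentiation under the integral and continuity of the resulting derivatives in the operator topology, i.e.\ showing that $\Phi'(v/r)\,v$ and $\Phi'(v/r)\,w$ are integrable and vary continuously in a neighborhood of a fixed $(v_0,r_0)$. I would handle this via the complementary Young function $\Psi$ together with Young's inequality $|ab|\leq \Phi(a)+\Psi(b)$: because $v/r$ lies on the Luxemburg unit sphere, $\Phi'(v/r)$ lies in the dual Orlicz space $L^\Psi$ with a controlled norm, and H\"older's inequality for Orlicz spaces then bounds the integrals uniformly in a neighborhood. With these estimates dominated convergence delivers both the pointwise derivative formulas and the continuity of $(v,r)\mapsto \partial_r G(v,r)$ and $(v,r)\mapsto D_v G(v,r)$ as maps into $\mathbb{R}$ and into $(L^\Phi)^*$ respectively, which is exactly what the Banach-space implicit function theorem requires. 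Note that higher regularity (e.g.\ $C^2$) would demand extra growth hypotheses on $\Phi$, which is precisely why the Finsler structure produced is only of class $C^1$ in general, as stated.
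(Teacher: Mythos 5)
Your proposal is correct and rests on the same core computation as the paper: implicitly differentiating the constraint $\int_M\Phi(v/r)\,\mu=1$ that characterizes the Luxemburg norm, arriving at the same first-derivative formula (your $D_v\|v\|_\Phi\cdot w$ agrees with the paper's coefficient $K_1$ after clearing the factor $\|v\|_\Phi$) and the same key positivity $\int_M \Phi'(v/r)\,v\,\mu>0$ from $t\Phi'(t)>0$. The difference is one of rigor rather than of idea. The paper posits a perturbation expansion $K(s)=K_0+K_1s+K_2s^2+\cdots$ of $\|f+sh\|_\Phi$ and reads off $K_1$ by matching the coefficient of $s$, which presupposes the very differentiability being claimed; you instead invoke the Banach-space implicit function theorem applied to $G(v,r)=\int_M\Phi(v/r)\,\mu-1$, with $\partial_rG<0$ on the zero set, which genuinely \emph{proves} $C^1$ regularity of $v\mapsto\|v\|_\Phi$ away from $0$ and yields continuity of the differential for free. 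Your additional care with Orlicz--H\"older duality and the complementary Young function is what one would need for a truly general Orlicz space; under the paper's standing assumptions for this section ($M$ compact, all functions smooth, $\Phi$ finite and smooth away from $0$), the integrands are bounded and these estimates are automatic, which is presumably why the paper omits them. One small caveat on your existence step: the claim that $r\mapsto\int_M\Phi(v/r)\,\mu$ runs continuously from $+\infty$ down to $0$, so that the infimum is attained with $G(v,\|v\|_\Phi)=0$, can fail for Young functions violating the $\Delta_2$ condition on a general measure space; it is valid here only because of the same standing smoothness and compactness assumptions (the paper likewise simply asserts that ``the infimum can be attained''). Net effect: your argument buys an honest proof of differentiability where the paper's is formal, at the cost of machinery that the paper's restricted setting does not strictly require.
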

\begin{proof}
First note that any Orlicz space endowed with a Luxemburg norm is a Banach space, and in smooth category, the only possible function $f$ satisfying $||f||_\Phi=0$ have to be zero. Then it suffices to show the metric is $C^1$. To prove this,  
we calculate the variation of any smooth function $f$ with respect to  $||\cdot||_\Phi$. Let $s\cdot h$ be a small variation of $f$ for the direction $h$.
\begin{align*}
K(s)=K_0+K_1 s+K_2 s^2+\cdots:=||(f+s\cdot h)||_\Phi,
\end{align*}
where $K_0=||f||_\Phi$, and $K_i$ are the coefficients of the perturbation expansion for each $i \in \mathbb{N}^+$. 
Then we can derive the Taylor expansion of $\Phi(\frac{f+s\cdot h}{K(s)})$ 
\begin{align*}
\Phi(\frac{f+s\cdot h}{K(s)})&=\Phi(\frac{f+s\cdot h}{K_0+K_1 s+K_2 s^2+\cdots})\\&=\Phi(\frac{f}{K_0})+\frac{s}{K_0}\left(h-\frac{fK_1}{K_0}\right)\Phi'(\frac{f}{K_0})+\mathcal{O}(s^2)
\end{align*}
Under our conditions on $\Phi$, the infimum can be attained, hence
\begin{equation}\label{variation1}
\int\Phi(\frac{f+s\cdot h}{K(s)})=1.
\end{equation}
So we obtain the variation $K_1$ of the norm.  Since the coefficient of $s$ on the left hand side of the Equation ~\eqref{variation1} should be zero, we have 
\begin{equation}\label{firstderivative}
K_1=\frac{||f||_\Phi\int h\ \Phi'(\frac{f}{||f||_\Phi}) \ \mu}{\int f\ \Phi'(\frac{f}{||f||_\Phi}) \ \mu}.
\end{equation}
It remains to show that the first derivative $K_1$ has non-vanishing denominator for any $f\in T\operatorname{Prob}(M) \setminus\{0\}$. Indeed, $||f||_\Phi>0$ for any nonzero $f$, and the fact that $\Phi'$ is odd implies that the following integral is necessarily positive,
\begin{align*}
\int f\ \Phi'(\frac{f}{||f||_\Phi}) \ \mu>0.    
\end{align*}
%Since $f$ can be any nonzero smooth function in the tangent space, the integral in the denominator may vanish. Thus we see that the norm is not necessarily smooth, which shows that the Finsler structure is non-smooth.
\end{proof}
\begin{rem}
In general, this Finsler metric does not satisfy the strong convex assumption as can be shown for the $L^p$-case~\cite{Lu2023}. In addition, the non-smoothness of the norm implies that  calculations of geodesic equations below are only of a formal nature.
\end{rem}
The Finsler structure thus defined on the Orlicz space
inherits some nice geometric properties from $L^p$ spaces. For example, consider the Luxemburg norm $L^{\Phi}$ on $\operatorname{Prob}(M)$.
\begin{prop}\label{orliczinvariant}
The $C^1$-Finsler metric $F_\Phi$ on $\operatorname{Prob}(M)$  with respect to the Young function $\Phi$ is invariant under all diffeomorphisms from a compact manifold $M$ into itself. In particular, this is true for the Young function
\begin{align*}
\Phi(t)=|t|\operatorname{log}(1+|t|).
\end{align*}
\end{prop}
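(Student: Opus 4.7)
The plan is to check invariance directly from the defining integral of the Luxemburg norm using a change of variables. With the natural extension $F_{\Phi}(\mu,a)=\|a/\mu\|_{\Phi,\mu}$ of the Fisher-Rao construction of Definition~\ref{Finsler Fisher metric} to the Orlicz setting (taking $\Phi(t)=|t|^{p}$ recovers the $L^{p}$-Fisher-Rao metric via the identification $\inf\{r>0:r^{-p}\int|a/\mu|^{p}\mu\le 1\}=(\int|a/\mu|^{p}\mu)^{1/p}$), the invariance to be established is
\begin{equation*}
F_{\Phi}(\varphi_{*}\mu,\varphi_{*}a)=F_{\Phi}(\mu,a),\qquad \varphi\in\operatorname{Diff}(M).
\end{equation*}

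First I would translate the pushforward into the quotient $a/\mu$: since $\varphi_{*}\mu=(\varphi^{-1})^{*}\mu$ and $\varphi_{*}a=(\varphi^{-1})^{*}a$, their Radon-Nikodym ratio pulls back cleanly to $\varphi_{*}a/\varphi_{*}\mu=(a/\mu)\circ\varphi^{-1}$. The claim then reduces to $\|f\circ\varphi^{-1}\|_{\Phi,\varphi_{*}\mu}=\|f\|_{\Phi,\mu}$ for $f=a/\mu$. This is immediate from the defining relation of the pushforward measure: for any measurable $h$,
\begin{equation*}
\int_{M}h\cdot\varphi_{*}\mu=\int_{M}(h\circ\varphi)\cdot\mu.
\end{equation*}
Applying this with $h=\Phi((f\circ\varphi^{-1})/r)$ shows that the two sets $\{r>0:\int\Phi(f/r)\mu\le 1\}$ and $\{r>0:\int\Phi((f\circ\varphi^{-1})/r)\varphi_{*}\mu\le 1\}$ coincide, so their infima agree; the invariance of $F_{\Phi}$ follows. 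This simultaneously recovers, as the special case $\Phi(t)=|t|^{p}$, the diffeomorphism invariance of every $L^{p}$-Fisher-Rao metric referenced in Remark~\ref{rem:cencov}.

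Once this general statement is in place, the specific claim for $\Phi(t)=|t|\log(1+|t|)$ reduces to the routine verification of the Young function axioms in Definition~\ref{orliczspace}: symmetry and $\Phi(0)=0$ are immediate; convexity follows from $\Phi''(t)=\frac{1}{1+t}+\frac{1}{(1+t)^{2}}>0$ on $(0,\infty)$ together with evenness; strict monotonicity on $[0,\infty)$ comes from $\Phi'(t)=\log(1+t)+\frac{t}{1+t}>0$ there; and $\Phi(x)/x=\log(1+|x|)\to\infty$ as $|x|\to\infty$. The main, and conceptually small, obstacle is ensuring that the infimum defining $\|\cdot\|_{\Phi}$ is attained at $\varphi_{*}\mu$ so that the $C^{1}$ Finsler structure of the preceding lemma actually applies there, but this is transferred from $\mu$ by the same change-of-variables identity used above, so no new analytic input is required.
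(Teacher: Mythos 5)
Your proposal is correct and follows essentially the same route as the paper: both reduce the invariance to the change-of-variables identity for the integral defining the Luxemburg norm (you phrase it via pushforwards, the paper via pullbacks, which is the same computation), showing the two admissible sets of $r$ coincide and hence so do their infima. Your explicit verification of the Young-function axioms for $\Phi(t)=|t|\log(1+|t|)$ is a small, correct addition that the paper leaves implicit.
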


\begin{proof}
Let $F$ be the Finsler metric on $\operatorname{Prob}(M)$ with respect to the Young function $\Phi$. For any tangent $a\in T_\mu\operatorname{Prob}(M)$, any diffeomorphism $\varphi$ operates on $F$ as follows
\begin{align*}
 \varphi^*F(\mu,a)=F(\varphi^*\mu, \varphi^*a)&=\operatorname{inf}\left\{r>0: \int_{M} \Phi(\frac{|\varphi^*(a/\mu)|}{r})\varphi^*\mu\leq 1\right\}\\&=\operatorname{inf}\left\{r>0: \int_{M} \Phi(\frac{|a/\mu|}{r})\mu\leq 1\right\}\\&=F(\mu,a).   
\end{align*}
Therefore, the invariance of Finsler structure also holds for the Young function $\Phi(t)=|t|\operatorname{log}(1+|t|)$. 
\end{proof}
It is important to note that the $C^1$-smoothness is sufficient to include Finsler metrics on infinite-dimensional manifolds of interest, such as $\operatorname{Prob}(M)$ and $\operatorname{Dens}(M)$. This is because even for the simplest Finsler metric on $\operatorname{Prob}(M)$, i.e., the $L^p$-Fisher metric $F$, the Hessian of $F^2$ necessarily fails to exist everywhere~\cite{Lu2023}, rendering the failure of $C^2$-finsler metrics.

\subsection{$\operatorname{Prob}(M)$ embedded in the unit sphere of an Orlicz space} In the preceding subsection, we impose an  Orlicz space related $C^1$-Finsler metric on the tangent bundle $T\operatorname{Prob}(M)$. There is another way to directly relate the Orlicz space to $\operatorname{Prob}(M)$. Note that the Young function $\Phi$ is invertible on the positive real axis since it is strictly increasing. If $\rho$ is a density in $\operatorname{Dens}(M)$, then we denote the inverse $A^{\Phi}(\rho)=\Phi^{-1}(\rho)$ by the $\Phi$-embedding. The following proposition shows that any probability density can be embedded in the unit sphere of an Orlicz space by using $\Phi$-embedding.
\begin{prop}[P.~Giblisco~\cite{Gib20}]
Let the smooth Young function $\Phi$ be invertible on the positive real axis. For any $\rho\in\operatorname{Prob}(M)$, the inverse map $A^{\Phi}(\rho):=\Phi^{-1}(\rho)$ lies on the unit sphere of the Orlicz space.
\end{prop}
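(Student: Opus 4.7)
The plan is to verify the statement directly from the definition of the Luxemburg norm, by showing that $r=1$ is the boundary of the admissible set in the infimum defining $\|\Phi^{-1}(\rho)\|_\Phi$. Since $\Phi$ is assumed to be strictly increasing on $[0,\infty)$ with $\Phi(0)=0$ and invertible there, and since $\rho>0$ as a probability density, the function $f:=\Phi^{-1}(\rho)$ is a well-defined positive (smooth) function on $M$, so there is no regularity or positivity issue in what follows.

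The first step is the easy direction $\|f\|_\Phi \leq 1$. Plugging in $r=1$ and using $\Phi(\Phi^{-1}(\rho))=\rho$ gives
\begin{equation*}
\int_M \Phi\!\left(\tfrac{f}{1}\right) dx = \int_M \Phi(\Phi^{-1}(\rho))\,dx = \int_M \rho\,dx = 1,
\end{equation*}
where the last equality uses $\rho\in\operatorname{Prob}(M)$. Thus $r=1$ is admissible in the infimum, so $\|f\|_\Phi \leq 1$.

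The second step is to rule out any $r<1$. Since $\Phi$ is symmetric and strictly increasing on $[0,\infty)$, and $f>0$ pointwise, for any $0<r<1$ we have $f/r > f$ pointwise, hence $\Phi(f/r) > \Phi(f)=\rho$ pointwise. Integrating over $M$ gives $\int_M \Phi(f/r)\,dx > \int_M \rho\,dx = 1$, so no $r<1$ satisfies the Luxemburg constraint. Combined with the previous step, this forces $\|f\|_\Phi = 1$ exactly, so $A^\Phi(\rho)=\Phi^{-1}(\rho)$ lies on the unit sphere of $L^\Phi$.

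Honestly, there is no serious obstacle here: the only thing one must notice is that the identity $\Phi(\Phi^{-1}(\rho))=\rho$ pairs perfectly with the normalization $\int\rho=1$ to land exactly on the boundary of the Luxemburg constraint. The strict monotonicity of $\Phi$ on $[0,\infty)$ then upgrades ``$\leq$'' to an equality. The only thing worth being careful about is that the claim uses the measure $dx$ (the reference volume form on $M$) and that $\rho$ is interpreted as a density with respect to this $dx$, which is consistent with the setup in Section~\ref{lpfishersec}.
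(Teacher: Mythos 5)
Your proof is correct and follows essentially the same route as the paper: plug in $r=1$ to get the integral equal to $1$ via $\Phi(\Phi^{-1}(\rho))=\rho$ and $\int\rho=1$, then use strict monotonicity of $\Phi$ to exclude every $r<1$. Your write-up is slightly more explicit about the pointwise positivity of $f=\Phi^{-1}(\rho)$, but the argument is the same.
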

\begin{proof}
First by the fact the integral of $\rho$ is one, 
\begin{align*}
\int\Phi(\frac{|A^{\Phi}(\rho)|}{1})=\int\Phi(\Phi^{-1}(\rho))=1.    
\end{align*}
Thus we have $||A^{\Phi}(\rho)||_\Phi\leq1$. The equality holds since $\Phi$ is smooth and strictly increasing, so for any real $r<1$, we have 
\begin{align*}
\int\Phi(\frac{|A^{\Phi}(\rho)|}{r})>\int\Phi(\frac{|A^{\Phi}(\rho)|}{1})=1.   
\end{align*}
\end{proof}

\subsection{The Euler-Arnold equation of $\operatorname{Diff}(S^1)/\operatorname{Rot}(S^1)$ endowed with the right invariant Luxemburg norm} 
The existence of $\Phi$-embedding reminds us of  considering the geodesic flow of probability densities endowed with Luxemburg norm. Since there is no $q$-energy for such a norm, we have to take a length functional to derive a Euler-Arnold equation; see formula~\ref{length}. In $S^1$, any probability density $\rho dx$ can be realized as the pullback of the canonical volume form $dx$ under an orientation preserving diffeomorphism $\varphi\in\operatorname{Diff}(S^1)$, i.e., $\rho dx=\varphi^*dx=\varphi_xdx$. Therefore, the $L^p$ metric $F_p$ can be interpreted as the one on
$\operatorname{Diff}(S^1)$
\begin{align*}
F_p(\rho dx,\rho_tdx)=\left(\int_{S^1}|\frac{\rho_t}{\rho}|^p\rho\right)^{1/p}= \left(\int|(\varphi_t \circ\varphi^{-1})_x|^pdx\right)^{1/p}=||(\varphi_t \circ\varphi^{-1})_x||_{L^p}.   
\end{align*}
We can replace the $L^p$-norm by right invariant Luxemburg norm $\Phi$,
then we obtain
\begin{equation}\label{luxeburgeuler}
F_ {\Phi}(\varphi,\varphi_t)= ||(\varphi_t \circ\varphi^{-1})_x||_{\Phi}= ||\frac{\varphi_{tx}}{\varphi_{x}}\circ\varphi^{-1}||_{\Phi},
\end{equation}
where $\varphi_t \circ\varphi^{-1}$ is the Eulerian coordinate.
\begin{prop}
Assume that $\varphi_{tx}/\varphi_{x}\geq C>0$. Then the geodesic equation of the quotient space $\operatorname{Diff}({S^1})/ \operatorname{Diff}_\mu({S^1})$ with respect to the right invariant Luxemburg norm is given by  
\begin{equation}
\frac{K_{0}^2\Phi(\frac{f}{K_0})}{\int f\ \Phi'(\frac{f}{K_0}) \varphi_x\ dx}=\frac{d}{dt}\left(\frac{K_{0}\Phi'(\frac{f}{K_0})}{\int f\ \Phi'(\frac{f}{K_0}) \varphi_x\ dx}\right)+\frac{K_{0}\Phi'(\frac{f}{K_0})}{\int f\ \Phi'(\frac{f}{K_0}) \varphi_x\ dx}\frac{\varphi_{tx}}{\varphi_{x}}.
\end{equation}
\end{prop}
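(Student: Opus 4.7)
The plan is to derive the equation as the first-variation (Euler-Lagrange) condition for the length functional
\[
L(\varphi)=\int_0^1 K_0(t)\,dt,\qquad K_0(t):=F_\Phi(\varphi,\varphi_t),
\]
along paths $\varphi(t,x)$ in $\operatorname{Diff}(S^1)$ with fixed endpoints. Since $F_\Phi$ is only $C^1$, the $q$-energy shortcut used in Theorem~\ref{lpfishereq} is unavailable, so I would work directly with $L$. The hypothesis $\varphi_{tx}/\varphi_x\geq C>0$ keeps the argument of $\Phi'$ uniformly away from the single non-smooth point of the Young function, justifying the differentiations that follow.

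First, combining~(\ref{luxeburgeuler}) with the change of variable $x=\varphi(y)$ expresses $K_0$ implicitly by
\[
\int_{S^1}\Phi\left(\frac{f}{K_0}\right)\varphi_x\,dx=1,\qquad f:=\frac{\varphi_{tx}}{\varphi_x},
\]
with the infimum in the Luxemburg norm attained. Let $\eta(t,x)$ be an admissible variation with $\eta(0,\cdot)=\eta(1,\cdot)=0$. Implicit differentiation of the above identity, following the calculation in the proof of the $C^1$-Finsler lemma preceding Proposition~\ref{orliczinvariant} (compare~(\ref{firstderivative})), together with the direct computation $\delta f\cdot\varphi_x=\eta_{tx}-f\eta_x$, yields
\[
\delta K_0=\frac{1}{D}\left[K_0\int_{S^1}A(\eta_{tx}-f\eta_x)\,dx+K_0^2\int_{S^1}\Phi(f/K_0)\,\eta_x\,dx\right],
\]
where $A:=\Phi'(f/K_0)$ and $D:=\int_{S^1}Af\,\varphi_x\,dx>0$ (the denominator is positive by the oddness of $\Phi'$, as in~(\ref{firstderivative})).

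Next I would assemble $\delta L=\int_0^1\delta K_0\,dt$, integrate by parts in $x$ on $S^1$ to move derivatives off $\eta_{tx}$ and $\eta_x$, and then integrate by parts in $t$ to move $\partial_t$ off the resulting $\eta_t$-term, discarding the $t$-boundary contributions using the endpoint condition. Imposing $\delta L=0$ for arbitrary admissible $\eta$, and observing that $K_0,D$ depend only on $t$ (and therefore commute with $\partial_x$), the Euler-Lagrange condition reduces to
\[
\partial_x\left[\partial_t\left(\frac{K_0 A}{D}\right)+\frac{K_0 A f}{D}-\frac{K_0^2\Phi(f/K_0)}{D}\right]=0,
\]
so the bracket is a function of $t$ alone. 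Taking this function to be zero and rearranging reproduces the displayed equation after substituting $A=\Phi'(f/K_0)$ and $f=\varphi_{tx}/\varphi_x$.

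The main obstacle is this last step: showing that the integration-in-$x$ constant indeed vanishes on the quotient. This is the Luxemburg-norm analogue of the Lagrange multiplier that appears as the operator $\nabla$ in equation~(\ref{lpgeodesicequ}) for the $L^p$-Fisher-Rao geodesic on $\operatorname{Prob}(M)$, and its vanishing should follow from the horizontal/vertical decomposition of the submersion $\operatorname{Diff}(S^1)\to\operatorname{Diff}(S^1)/\operatorname{Diff}_\mu(S^1)$. A secondary obstacle is the rigorous justification of the implicit differentiation defining $\delta K_0$: the hypothesis $\varphi_{tx}/\varphi_x\geq C>0$ is essential here to keep $f/K_0$ bounded away from the point where $\Phi'$ may fail to be classically differentiable.
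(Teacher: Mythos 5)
Your proposal follows essentially the same route as the paper: both derive the equation by implicit differentiation of the Luxemburg-norm constraint $\int_{S^1}\Phi(f/K_0)\,\varphi_x\,dx=1$ to obtain the first variation of the norm (your $\delta K_0$ agrees exactly with the paper's $K_1$ after substituting $h\varphi_x=\eta_{tx}-f\eta_x$), followed by integration by parts in $t$ and reading off the coefficient of $\delta\varphi_x$. The only divergence is that you explicitly flag the integration constant in $x$ (the bracket need only be constant in $x$, not zero, since $\delta\varphi_x$ ranges over mean-zero functions), a genuine subtlety that the paper passes over silently by setting the coefficient of $\delta\varphi_x$ directly to zero.
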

\begin{proof}
A geodesic flow for the norm $F$ is an extremal curve $\varphi(t,x)$ of the energy functional:
\begin{equation}
 E(\varphi):=\frac{1}{2}\int_0 ^1 F_ {\Phi}(\varphi,\varphi_t)\ dt.
\end{equation}
The corresponding energy functional reads as
\begin{equation} 
E(\varphi)=\frac{1}{2}\int_0^1 ||\frac{\varphi_{tx}}{\varphi_{x}}\circ\varphi^{-1}||_{\Phi}\ dt.
\end{equation}
In order to simplify the calculation, we assume that the function $f:=\varphi_{tx}/\varphi_{x}\geq C>0$ and that the infimum equals the minimum. From the definition of the Luxemburg norm, we have
\begin{align*}
||f\circ\varphi^{-1}||_{\Phi}&=\operatorname{inf}\left\{K>0: \int_{S^1} \Phi(\frac{f\circ\varphi^{-1}}{K})\ dx\leq 1\right\}\\&=\operatorname{inf}\left\{K>0: \int_{S^1} \Phi(\frac{f}{K}) \ \varphi_{x}\ dx\leq 1\right\}.
\end{align*}
First we consider the variation of the function $f$
\begin{align}
K(s)=K_0+K_1 s+K_2 s^2+\cdots:=||(f+s\cdot h)\circ\varphi^{-1}||.
\end{align}

Then we can derive the Taylor expansion of $\Phi(\frac{f+s\cdot h}{K(s)})$
\begin{align*}
\Phi(\frac{f+s\cdot h}{K(s)})&=\Phi(\frac{f+s\cdot h}{K_0+K_1 s+K_2 s^2+\cdots})\\&=\Phi(\frac{f}{K_0})+\frac{s}{K_0}\left(h-\frac{fK_1}{K_0}\right)\Phi'(\frac{f}{K_0})+\mathcal{O}(s^2)
\end{align*}
Since the infimum is assumed to be the minimum, we have the equality
\begin{equation}\label{variation}
\int\Phi(\frac{f+s\cdot h}{K(s)})(\varphi_x+s\cdot\delta \varphi_x)=1
\end{equation}
For the coefficient of $s$ on the left hand side of  equation ~\eqref{variation} should be zero, we get the variation $K_1$ of the norm  
\begin{equation}
K_1=\frac{K_{0}^2\int\Phi(\frac{f}{K_0})\delta \varphi_x\ dx+K_{0}\int h\ \Phi'(\frac{f}{K_0}) \varphi_x\ dx}{\int f\ \Phi'(\frac{f}{K_0}) \varphi_x\ dx}.
\end{equation}
Next we use the equalities
\begin{align*}
h=\delta f=\frac{\delta \varphi_{tx}}{\varphi_{x}}-\frac{\varphi_{tx}}{\varphi_{x}^2}\delta \varphi_{x},\ \
K_0=||\frac{\varphi_{tx}}{\varphi_{x}}\circ\varphi^{-1}||_{\Phi}
\end{align*}
and we have the variation of the energy functional
\begin{align*}
2\delta E(\varphi)(\delta \varphi) &=\int\frac{K_{0}^2\int\Phi(\frac{f}{K_0})\delta \varphi_x\ dx+K_{0}\int h\ \Phi'(\frac{f}{K_0}) \varphi_x\ dx}{\int f\ \Phi'(\frac{f}{K_0}) \varphi_x\ dx} dt
\\&=\int\int\frac{K_{0}^2\Phi(\frac{f}{K_0})}{\int f\ \Phi'(\frac{f}{K_0}) \varphi_x\ dx}\delta \varphi_x\ dx dt\\&-\int\int\frac{d}{dt}\left(\frac{K_{0}\Phi'(\frac{f}{K_0})}{\int f\ \Phi'(\frac{f}{K_0}) \varphi_x\ dx}\right)\delta \varphi_{x}\ dx dt\\&-\int\int\frac{K_{0}\Phi'(\frac{f}{K_0})}{\int f\ \Phi'(\frac{f}{K_0}) \varphi_x\ dx}\frac{\varphi_{tx}}{\varphi_{x}}\delta \varphi_{x}\ dx dt
\end{align*}
Thus we can read off the geodesic equation:
\begin{equation}
\frac{K_{0}^2\Phi(\frac{f}{K_0})}{\int f\ \Phi'(\frac{f}{K_0}) \varphi_x\ dx}=\frac{d}{dt}\left(\frac{K_{0}\Phi'(\frac{f}{K_0})}{\int f\ \Phi'(\frac{f}{K_0}) \varphi_x\ dx}\right)+\frac{K_{0}\Phi'(\frac{f}{K_0})}{\int f\ \Phi'(\frac{f}{K_0}) \varphi_x\ dx}\frac{\varphi_{tx}}{\varphi_{x}}.
\end{equation}
\end{proof}
\begin{rem}
 In particular for the $L^p$-case $\Psi(x)=|x|^p$, we have
\begin{align*}
\int f\ \Phi'(\frac{f}{K_0}) \varphi_x\ dx=\frac{p}{K_0^{p-1}}\int(\frac{\varphi_{tx}}{\varphi_{x}})^p \varphi_{x}\ dx=pK_0
\end{align*}
If we further restrict ourselves to paths of constant speed, the geodesic equation can be simplified to
\begin{equation}
K_{0}^2\Phi(\frac{f}{K_0})=\frac{d}{dt}\left(K_{0}\Phi'(\frac{f}{K_0})\right)+K_{0}\Phi'(\frac{f}{K_0})\frac{\varphi_{tx}}{\varphi_{x}}
\end{equation}
and then to
\begin{equation}
p\frac{d}{dt}(\frac{\varphi_{tx}}{\varphi_{x}})^{p-1}+(\frac{\varphi_{tx}}{\varphi_{x}})^p=0.
\end{equation} 
Note that this gives us exactly the geodesic equation as derived directly from the $L^p$-norm in Theorem~\ref{geodLPFR}.
\end{rem}

\bibliographystyle{abbrv}

\end{document}